\documentclass[10pt,a4paper,english]{article}

\usepackage{amsfonts,amssymb,stmaryrd,amsmath,amsthm,amssymb}
\usepackage{graphicx,color}
\usepackage{multimedia}
\usepackage{float}
\usepackage{placeins}

\usepackage{mathrsfs}  

\usepackage{algorithm}

\usepackage{mathtools}

\usepackage[a4paper]{geometry}
\geometry{body={17cm,23.5cm}} 

\newtheorem{theorem}{Theorem}
\newtheorem{lemma}{Lemma}
\newtheorem{remark}{Remark}
\newtheorem{proposition}{Proposition}
\newtheorem{corollary}{Corollary}

\newcommand \bx {\boldsymbol{\mathrm{x}}}
\newcommand \bn {\boldsymbol{\mathrm{n}}}
\newcommand \bw {\boldsymbol{\mathrm{w}}}

\newcommand \bu {\boldsymbol{\mathrm{u}}}
\newcommand \bv {\boldsymbol{\mathrm{v}}}

\newcommand \bff {\boldsymbol{\mathrm{f}}}
\newcommand \bgg {\boldsymbol{\mathrm{g}}}
\newcommand \blambda {\boldsymbol{\mathrm{\lambda}}}

\newcommand \bmu {\boldsymbol{\mathrm{\mu}}}

\newcommand \bvarphi {\boldsymbol{\mathrm{\varphi}}}

\newcommand \p {\partial}

\newcommand \Q {\mathrm{Q}}
\newcommand \R {\mathbb{R}}

\renewcommand \L {\mathrm{L}}

\newcommand \VV {\mathbf{V}}
\newcommand \HH {\mathbf{H}}
\newcommand \LL {\mathbf{L}}

\newcommand \ZZ {\mathbf{Z}}
\newcommand \WW {\mathbf{W}}
\renewcommand \H {\mathrm{H}}

\newcommand \I {\mathrm{I}}

\renewcommand \d {\mathrm{d}}

\newcommand \trace {\mathrm{trace}}

\DeclareMathOperator{\divg}{div}

\begingroup\makeatletter\ifx\SetFigFont\undefined%
\gdef\SetFigFont#1#2#3#4#5{%
  \reset@font\fontsize{#1}{#2pt}%
  \fontfamily{#3}\fontseries{#4}\fontshape{#5}%
  \selectfont}%
\fi\endgroup%

\title{A fictitious domain approach for a mixed finite element method solving the two-phase Stokes problem with surface tension forces\thanks{This work is supported by the Austrian Science Fund (FWF) special research grant SFB-F32 "Mathematical Optimization and Applications in Biomedical Sciences", and the BioTechMed-Graz.}}

\author{S\'ebastien Court\thanks{Institute for Mathematics and Scientific Computing, Karl-Franzens-Universit\"{a}t, Heinrichstr. 36, 8010 Graz, Austria, email: {\tt sebastien.court@uni-graz.at}.}}

\begin{document}

\maketitle

\begin{abstract}
In this article we study a mixed finite element formulation for solving the Stokes problem with general \textcolor{black}{surface} forces that induce a jump of the normal trace of the stress tensor, on an interface that splits the domain into two subdomains. Equality of velocities is assumed at the interface. The \textcolor{black}{interface} conditions are taken into account with multipliers. A suitable Lagrangian functional is introduced for deriving a weak formulation of the problem. A specificity of this work is the consideration of the interface with a fictitious domain approach. The latter is inspired by \textcolor{black}{the XFEM approach} \textcolor{black}{in the sense that cut-off functions are used}, but it is simpler to implement \textcolor{black}{since no enrichment by singular functions is provided}. In that context, getting convergence for the dual variables defined on the interface is non-trivial. For that purpose, an augmented Lagrangian technique stabilizes the convergence of the multipliers, which is important because their value would determine the dynamics of the interface in an unsteady framework. Theoretical analysis is provided, where we show that a discrete inf-sup condition, independent of the mesh size, is satisfied for the stabilized formulation. This guarantees optimal convergence rates, that we observe with numerical tests. The capacity of the method is demonstrated with robustness tests, and with an unsteady model tested for deformations of the interface that \textcolor{black}{correspond} to ellipsoidal shapes \textcolor{black}{in dimension 2}.
\end{abstract}

\noindent{\bf Keywords:} Finite Element Methods, Incompressible viscous fluid, Two-phase flow, Surface tension, Jump conditions, Fictitious domain approach, Stabilization technique.\\
\hfill \\
\noindent{\bf AMS subject classifications (2010): 65N30, 76D45, 65M85, 76T10, 76D05.} 

\tableofcontents

\section{Introduction}

In the present article we are interested in developing a finite element method for solving the Stokes problem with surface tension on an immersed \textcolor{black}{interface}. \textcolor{black}{This problem concerns the simulation} of the motion of a bubble-soap for example. We consider that the bubble-soap has no thickness, and is represented by a hypersurface. Its presence inside the fluid is modeled by a Neumann-type force which generates a jump of the normal trace of the stress tensor. This force is proportional to the mean curvature of the surface. In particular, at the equilibrium, this force indicates the difference of pressures inside and outside the bubble-soap. Apart from the equilibrium, this force impacts the behavior of the surrounding fluid on both sides, and the response of the fluid is a velocity on the hypersurface, by assuming the equality of velocities at the interface. This velocity determines the evolution of the interface, and thus this is how the dynamics of the bubble-soap is coupled to its own geometry. Addressing the question of existence of weak solutions for such a kind of models can be a difficult task. For more details on the mathematical aspects, we refer to~\cite{Abels2016}.

We focus our interest on the linear Stokes problem, which constitutes the corner stone of more complex models like the Navier-Stokes equations. Inside a domain $\Omega \subset \R^d$ ($d=2$ or $3$), we consider an immersed interface $\Gamma$, that we assume to be a closed smooth oriented manifold of codimension 1 without boundary, let us say a smooth perturbation of the sphere, for the sake of simplicity. The hypersurface $\Gamma$ splits the domain $\Omega$ into two connected subset $\Omega^+$ and $\Omega^-$, as described in Figure~\ref{fig1}. We thus have $\Omega = \Omega^+ \cup \overline{\Omega^-}$. The velocity-pressure couples are denoted by $(\bu^+,p^+)$ and $(\bu^-,p^-)$ inside $\Omega^+$ and $\Omega^-$, respectively. 

\begin{figure}[!h]
	\begin{center}
		\scalebox{0.5}
		{ \input{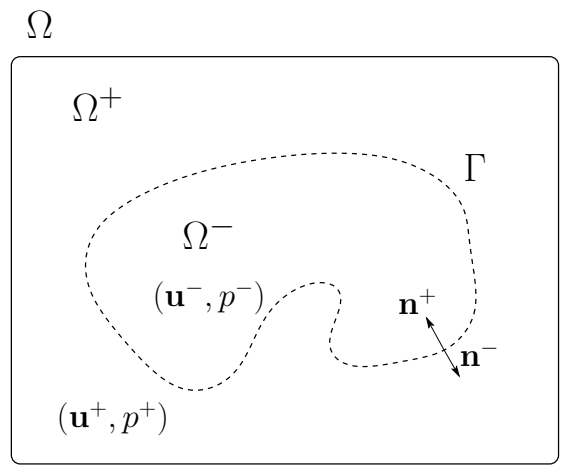} }
		\caption{A \textcolor{black}{surface} force applied on an interface, separating the fluid domain into two parts.}\label{fig1}
	\end{center}
\end{figure}
\FloatBarrier

The system we are interested in is the following:
\begin{eqnarray*}
	\left\{ \begin{array} {rcl}
		-\nu^+ \Delta \bu^+ + \nabla p^+ = \bff^+ & & \text{in } \Omega^+, \\
		-\nu^- \Delta \bu^- + \nabla p^- = \bff^- & & \text{in } \Omega^-, \\
		\divg \bu^+ = 0 & & \text{in } \Omega^+, \\
		\divg \bu^- = 0 & & \text{in } \Omega^-, \\
		\bu^+ = 0 & & \text{on } \p \Omega, \\
		\bu^+ - \bu^- = 0 & & \text{across } \Gamma, \\
		\sigma(\bu^+, p^+)\bn^+ + \sigma(\bu^-, p^-)\bn^- = \bgg & & \text{across } \Gamma.
	\end{array} \right.
\end{eqnarray*}
The notation $\sigma(\bu,p) = \nu(\nabla \bu + \nabla \bu ^T) -p\, \I$ refers to the stress tensor, where $\nu =\nu^+$ or $\nu^-$ stands for the viscosities. The data on which we focus our interest is a general \textcolor{black}{surface} force $\bgg$. We also consider volume forces $\bff^+$ and $\bff^-$ in the right-hand-side of the first two equations in the system above, but their consideration in finite element formulations does not involve any specific difficulty. We denote by $\bn^+$ and $\bn^-$ the outward unit normal on $\Gamma$ of $\Omega^+$ and $\Omega^-$, respectively. The goal of this paper is to define a robust approximation of a Poincar\'e-Steklov operator (of type {\it Neumann-to-Dirichlet}), that computes the trace of the velocity on $\Gamma$ from the surface tension force $\bgg$.

The state-of-the-art of finite element methods developed for solving two-phase flow problems with surface tension forces can be divided into different types of strategies. The first one consists of methods that adapt the mesh in function of the shape of the interface. Among them, adaptive methods were developed in~\cite{Dufour1998, Kou2014, Xie2016}. Another strategy consists in deforming the mesh in function of the deformation of the interface. A Lagrangian framework was considered in~\cite{Peric2001}. {\it Arbitrary Lagrangian Eulerian} (ALE) formulations are more famous for fluid-structure interactions models. However, we can mention~\cite{Navti1997, Kou2014, Anjos2014, Liu2017, Anjos2018} where ALE-FEM methods are developed in the context of surface tension models. Finally, the use of unfitted mesh -- which is our concern -- was considered in~\cite{Zhang2016} . \textcolor{black}{These techniques require local treatments and specific approximations of the forces on the interface, in particular when the latter is implemented with a level-set function. In this fashion, we can evoke the works~\cite{Gross2007-2, Gross2007}, where enrichment of basis functions are provided.}

\textcolor{black}{Following a fictitious domain approach, other strategies can be mentioned for capturing an interface in the context of two-phase flows. Level-set methods for multiphase flows are developed in~\cite{Engquist2002, Mesri2016, Turek2018}. Discontinuous Galerkin methods are used in~\cite{Whiteley2015, Moortgat2016}. The problem of determining th position of the interface, and tracking it constitutes the wide family of interface-capturing methods. In the context of the present work, we can cite~\cite{Ohmori1997, Devals2007}, and more recently~\cite{Owkes2013, Denner2014, Dhar2015, Park2018, Duret2018, Heinrich2018}. In our case, a level-set function helps us in practice to determine the position of the interface, by saying on which side of the interface a point of the domain is located, in the same fashion as~\cite{Mesri2016}.
}

\textcolor{black}{In the present work the focus is on the introduction of dual variables, namely multipliers for taking into account the interface conditions. This is achieved with the consideration of a judicious Lagrangian functional, from which the finite element formulation is derived. When the optimality conditions for this Lagrangian functional are satisfies, these multipliers correspond to velocities and forces on this interface, whose values are unknowns in the problem, and so their approximation is of interest for the consideration of coupled systems.}

\textcolor{black}{An other} originality of our method lies in the fact that the \textcolor{black}{interface} $\Gamma$ is taken into account with a fictitious domain approach. That means that the \textcolor{black}{interface} does not fit the mesh, and so the latter is chosen independently of the \textcolor{black}{interface} (Cartesian mesh, structured mesh...). Our approach is inspired by the eXtended Finite Element Method (XFEM) introduced by~\cite{Moes1999}. This method consists in enriching the  set of basis functions with singular functions, in order to handle variables (defined on the \textcolor{black}{interface}) whose degrees of freedom are independent of the mesh edges. See~\cite{reviewXfem} for a review of the applications of the method. Applications of XFEM to the context of two-phase flow were tested in~~\cite{Chessa2003, Chessa2003-2, Reusken2007, Reusken2008, Gross2011, Sauerland2011, Cheng2012, Liao2012, Sauerland2013, Fahsi2017}, for instance. In our case, \textcolor{black}{one of the main difference with the XFEM approach lies in the fact that} we do not provide singular functions as enrichment, but merely the trace on the \textcolor{black}{interface} of the standard basis functions (see section~\ref{sec-fict} for the details). The price-to-pay -- in comparison with XFEM -- is a lack of robustness with respect to the geometry, and a lack of quality for the convergence of the dual variables. We circumvent this drawback by performing a stabilization technique with an augmented Lagrangian \`a la Barbosa-Hughes~\cite{Barbosa1, Barbosa2}. See also~\cite{Tezduhar2003}. The present strategy was first introduced in~\cite{Renard2009}, and next adapted to fluid mechanics in~\cite{Court2014, Court2015} in the context of Fluid-Structure Interactions \textcolor{black}{based on Dirichlet boundary conditions}. This approach has also shown its capacities in~\cite{Court2016} where complex non-planar cracks in 3D were taken into account in the context of Geophysics. In this fashion, let us also mention that Nitsche type methods were developed for solving this kind of problems, in several works~\cite{Hansbo2002, Hansbo2005, Hansbo2005-2, Hansbo2005-3, Hansbo2010, Hansbo2011, Hansbo2014}. This family of methods does not require the introduction of Lagrange multipliers for the boundary conditions, whose consideration can be made with overlapping meshes, for instance.

\textcolor{black}{In the present work, Taylor-Hood elements will be used for the choice of pairs between primal and dual variables. We will then focus our interest in the use of structured meshes, since unstructured meshes can lead to instabilities in that case (see~\cite{Case2011, Gonzalez2015} for instance)}.

By developing such a method, our goal is to have a tool which enables us to perform unsteady simulations involving a moving interface with \textcolor{black}{surface} forces, in complex situations where sparing computation time is crucial. The purpose can be the study of the movement of a bubble-soap, or the simulation of the stabilization of this latter, by the use of an electric field as a control function for instance (see~\cite{Sato1998}), that acts on the interface as a surface tension type force. The interest of our method lies in the fact that, for updating the geometry between two time-steps, we only need to update a number of objects which is of the same range as the number of degrees of freedom chosen for describing the interface (see section~\ref{sec-smartupdate} for more details). 

We illustrate the capacity of the method and our underlying motivation by performing simulations for an unsteady coupled model. The initial geometric configuration, given by the \textcolor{black}{interface} $\Gamma$ at time $t=0$, generates a surface tension force $\bgg = -\mu \kappa \bn^-$ where $\mu$ is a coefficient and $\kappa$ is the mean curvature of the \textcolor{black}{interface}. While solving the problem for this force, we obtain the trace of the velocity on the \textcolor{black}{interface}. This velocity enables us to update the \textcolor{black}{interface} for the next time step. This is a so-called {\it partitioned} method. \textcolor{black}{In this fashion, let us mention~\cite{Turek2004}, which treats of an other type of coupled problems}.\\

The plan is organized as follows: In section~\ref{sec-setting} we set the problem and its variational formulation, by introducing a judicious Lagrangian functional. The discretization is described in section~\ref{sec-fict}: \textcolor{black}{the} fictitious domain method is explained in section~\ref{subsec-fict}, and the theoretical analysis is provided in section~\ref{subsec-theor} (without stabilization) and in section~\ref{subsec-theorstab} (with stabilization). Explanations about the practical implementation are given in section~\ref{sec-impl}. Numerical tests are provided in section~\ref{sec-numtests}. Convergence and accuracy are tested with and without the stabilization technique in section~\ref{subsec-cvstab0} and section~\ref{subsec-cvstab1}, respectively. In particular, robustness with respect to the geometry is demonstrated in section~\ref{subsec-cvrobust}. The unsteady simulations are given in section~\ref{sec-unsteady}. Conclusions are given in section~\ref{sec-conclusion}. \textcolor{black}{The Appendix is devoted to the proofs of technical results.}

\paragraph{Notation.} The symbol $\pm$ will be used for indicating that we consider both symbols $+$ and $-$, for the sake of concision. The jump of a variable $\varphi$ across $\Gamma$ will be denoted by $\left[ \varphi \right]$, equal to $\varphi^+ - \varphi^-$. As unit normal of reference, we denote $\bn = \bn^-$, and so $\bn^+ = -\bn$. We denote by $\sigma(\bv,q) = 2\nu \varepsilon(\bv) - q\, \I$ the stress tensor, where $\varepsilon(\bv) = \frac{1}{2}(\nabla \bv + \nabla \bv^T)$ is the symmetric Cauchy stress tensor and $\I$ is the identity matrix of $\R^{d\times d}$. When $\divg \bv = 0$, we recall that $-\divg \sigma(\bv,q) = -\nu \Delta \bv + \nabla q$. We denote by $|\cdot |$ the Euclidean norm of $\R^d$ or $\R^{d\times d}$, and by $A:B = \trace(A^T B)$ the scalar product in $\R^{d\times d} \times \R^{d\times d}$.

\section{Setting of the problem} \label{sec-setting}
Let us consider the following system:
\begin{eqnarray} \label{sysjump} \label{mainsys} \label{syscont}
\left\{ \begin{array} {rcl}
	-\divg \sigma^\pm(\bu^\pm,p^\pm)  = \bff^\pm & & \text{in } \Omega^\pm, \label{eqstokes}\\
	\divg \bu^\pm = 0 & & \text{in } \Omega^\pm, \\
	\bu^+ = 0 & & \text{on } \p \Omega, \\
	\left[ \bu \right] = 0 & & \text{across } \Gamma, \label{eqjump} \\
	\left[\sigma(\bu, p)\right]\bn = \bgg & & \text{across } \Gamma. \label{eqjumpg}
	\end{array} \right.
\end{eqnarray}
The notation $\sigma^\pm(\bu^\pm,p^\pm) := 2\nu^\pm \varepsilon(\bu^\pm) - p^\pm\I$ is introduced for considering  different (constant) viscosities. Assuming that $\Gamma$ is closed, we define the following function spaces:
\begin{eqnarray*}
	\begin{array} {lcl}
		\mathbf{V}^{+} = \displaystyle \left\{\bv \in \mathbf{H}^1(\Omega^+)\mid \ \bv_{|\p \Omega} = 0\right\}, & &
		\mathbf{V}^- = \mathbf{H}^1(\Omega^-)
		, \\[5pt]
		\Q^{\pm} = \displaystyle \left\{q \in \LL^2(\Omega^\pm) \mid \ \int_{\Omega^\pm} q \, \d \Omega^\pm = 0 \right\}, & & \\[10pt]
		\WW = \HH^{-1/2}(\Gamma), & & \ZZ = \WW' = \HH^{1/2}(\Gamma).
	\end{array}
\end{eqnarray*}
\textcolor{black}{We will denote by $\langle \, \cdot \, ; \cdot \, \rangle_{\WW;\WW'}$ the duality bracket between $\WW'$ and its dual space $\WW'' \equiv \WW$.} The equality of velocities in the third equation of~\eqref{mainsys} suggests the existence of a function $\Phi$ such that $\bu^+ = \Phi$ and $\bu^- = \Phi$. We take into account these two equalities by introducing two multipliers denoted by $\blambda^\pm$. More specifically, we look for a weak solution of system~\eqref{sysjump} as a critical point of the following Lagrangian functional:
\begin{eqnarray}
\mathscr{L}_0(\bu^+, p^+, \blambda^+, \bu^-, p^-, \blambda^-, \Phi) & = &
2\nu \int_{\Omega^+} |\varepsilon(\bu^+)|^2\d \Omega^+ + 2\nu \int_{\Omega^-} |\varepsilon(\bu^-)|^2\d \Omega^- \nonumber\\
& & - \int_{\Omega^+} \bff^+\cdot \bu^+\d \Omega^+ - \int_{\Omega^-} \bff^-\cdot \bu^-\d \Omega^- \nonumber\\
& & - \int_{\Omega^+} p^+\divg \bu^+ \d \Omega^+ - \int_{\Omega^-} p^-\divg \bu^- \d \Omega^- \nonumber\\
& & -  \langle \blambda^+ ; \bu^+ - \Phi \rangle_{\mathbf{W};\mathbf{W}'}
-  \langle \blambda^- ; \bu^- - \Phi\rangle_{\mathbf{W};\mathbf{W}'}
- \langle \bgg ; \Phi \rangle_{\mathbf{W};\mathbf{W}'}. \nonumber
\end{eqnarray}
\textcolor{black}{When the derivatives of $\mathscr{L}_0$ with respect to $\bv^\pm$ vanish, by integration by parts we obtain the first equation of~\eqref{eqstokes} and also $\blambda^{\pm}  =  \sigma^\pm(\bu^{\pm},p^{\pm})\bn^{\pm}$. Next, the sensitivity of $\mathscr{L}_0$ with respect to $\blambda^\pm$ and $\Phi$ implies to the transmission conditions of~\eqref{eqstokes}.}
For the sake of concision, we will denote
\begin{eqnarray*}
\mathfrak{u}  =  (\bu^+,p^+, \blambda^+,\bu^-, p^-, \blambda^-, \Phi)
& \text{ and } &
\mathfrak{v}  =  (\bv^+,q^+,\bmu^+,\bv^-,q^-, \bmu^-, \varphi).
\end{eqnarray*} 
The first-order optimality conditions satisfied by a saddle-point of $\mathscr{L}_0$ then yield the following variational formulation:
\begin{eqnarray}
	& & \text{Find 
$\mathfrak{u} \in \VV^+  \times \Q^+  \times \mathbf{W}\times \VV^-\times \Q^- \times \mathbf{W} \times \ZZ$ such that} \nonumber \\
	& & \left\{ \begin{array} {lcl}
		\mathcal{A}_0^\pm(\mathfrak{u};\bv) =
		\mathcal{F}^\pm(\bv) & & 
		\forall \bv \in \VV^\pm, \\[5pt]
		\mathcal{B}_0^\pm(\mathfrak{u};q) = 0 & & 
		\forall q \in \Q^\pm, \\[5pt]
		\mathcal{C}_0^\pm(\mathfrak{u};\bmu) = 0 & & 
		\forall \bmu \in \WW^\pm, \\[5pt]
		\mathcal{D}_0^\pm(\mathfrak{u};\bvarphi) = 
		\mathcal{G}(\bvarphi) & & 
		\forall \bvarphi \in \ZZ .
	\end{array} \right.
\end{eqnarray}
In this formulation we introduced the following bilinear forms
\begin{eqnarray*}
\mathcal{A}_0^\pm(\mathfrak{u};\bv)  =  
2\nu \int_{\Omega^{\pm}} \sigma^\pm(\bu^{\pm},p^{\pm}): \varepsilon(\bv) \, \d \Omega^{\pm} 
 - \langle \blambda^{\pm} \; \bv \rangle_{\mathbf{W};\mathbf{W}'}, & &
\mathcal{F}^\pm(\bv)  =  
\int_{\Omega^\pm} \bff^\pm \cdot \bv \, \d \Omega^\pm, \\
\mathcal{B}_0^\pm(\mathfrak{u};q)  =  -\int_{\Omega^\pm} q^\pm \divg \bu^\pm \d \Omega^\pm,
& & \\[5pt]
\mathcal{C}_0^\pm(\mathfrak{u};\bmu)  =  
-\langle \bmu ; \bu^\pm - \Phi \rangle_{\mathbf{W};\mathbf{W}'}, & & \\
\mathcal{D}_0^\pm(\mathfrak{u};\bvarphi) =  
\langle \blambda^{+} + \blambda^- ; \bvarphi \rangle_{\mathbf{W}; \mathbf{W}'},  & &
\mathcal{G}(\bvarphi)  =  \langle \bgg ; \bvarphi \rangle_{\mathbf{W}; \mathbf{W}'}.
\end{eqnarray*}

\section{Discrete formulation} \label{sec-fict}
In the rest of the paper, we will consider a Cartesian mesh, and we will denote the mesh parameter by $h  =  \max_{T\in \mathcal{T}_h} h_T$, where $h_T$ is the diameter of a triangle $T$, and $\mathcal{T}_h$ is the set of the triangles of the mesh.

\subsection{The fictitious domain method} \label{subsec-fict}
We first consider global functions on the same whole domain $\Omega$, that we discretize with a structured mesh. On this mesh we define discrete finite element spaces, $\tilde{\VV}_h \subset \HH^1(\Omega)$, $\tilde{Q}_h \subset \L^2_0(\Omega)$, $\tilde{\WW}_h \subset \LL^2(\Omega)$ and $\tilde{\ZZ}_h \subset \HH^1(\Omega)$. We set
\begin{eqnarray*}
\tilde{\VV}_h  =  \left\{\bv_h \in \mathscr{C}(\overline{\Omega}) \mid \ {\bv_h}_{|\p \Omega} = 0, \ {\bv_h}_{\left| T\right.} \in P(T), \ \forall T \in \mathcal{T}_h\right\},
\end{eqnarray*}
where $P(T)$ denotes a finite dimensional space of smooth functions that contains polynomial functions of degree $k \geq 1$ on a triangle $T$, taken in the set $\mathcal{T}_h$ of triangles of the mesh. We refer to~\cite{Ern} for details. 
The fictitious finite element spaces are defined as follows:
\begin{eqnarray*}
	\VV^\pm_h := {\tilde{\mathbf{V}}{}_h}_{\left| \Omega^{\pm} \right.}, \quad 
	\Q^\pm_h := {\tilde{\Q}{}_h}_{\left| \Omega^{\pm} \right.}, \quad 
	\WW_h := {\tilde{\WW}{}_h}_{\left| \Gamma \right.}, \quad
	\ZZ_h := {\tilde{\ZZ}{}_h}_{\left| \Gamma \right.}.
\end{eqnarray*}
Note that these spaces are the intuitive discretizations of spaces $\VV^\pm$, $\Q^\pm$, $\WW$ and $\ZZ$, respectively. The corresponding selection of degrees of freedom is explained in Figure~\ref{fig-fictistyle}. \textcolor{black}{For this task, we need to know whether a node in one side of the interface are the other, and this can be realized in practice with the use of a level-set function. This object represents the interface, and at this stage is used only for implementation purposes.} In particular, the points of intersection of the edges of the mesh with the \textcolor{black}{interface} (a circle in Figure~\ref{fig-fictistyle}) are determined, in order to define an approximation of the \textcolor{black}{level-set} (by piecewise polynomial functions), as well as degrees of freedom for the multipliers. This technique of discretization is inspired by XFEM, with the difference that here we do not provide enrichment of the standard basis elements with specific singular functions, we only take into account the standard basis functions multiplied by the Heaviside functions ($H(\mathrm{x}) =1$ when $\mathrm{x} \in \Omega^\pm$, $H(\mathrm{x}) = 0$ otherwise). The resulting products appear in the integrals of the variational formulation, during the assembly procedure (see section~\ref{sec-smartupdate}).

\begin{minipage}{\linewidth}
\begin{center}
	\includegraphics[trim = 10cm 4.5cm 10cm 4.5cm, clip, scale=0.22]{./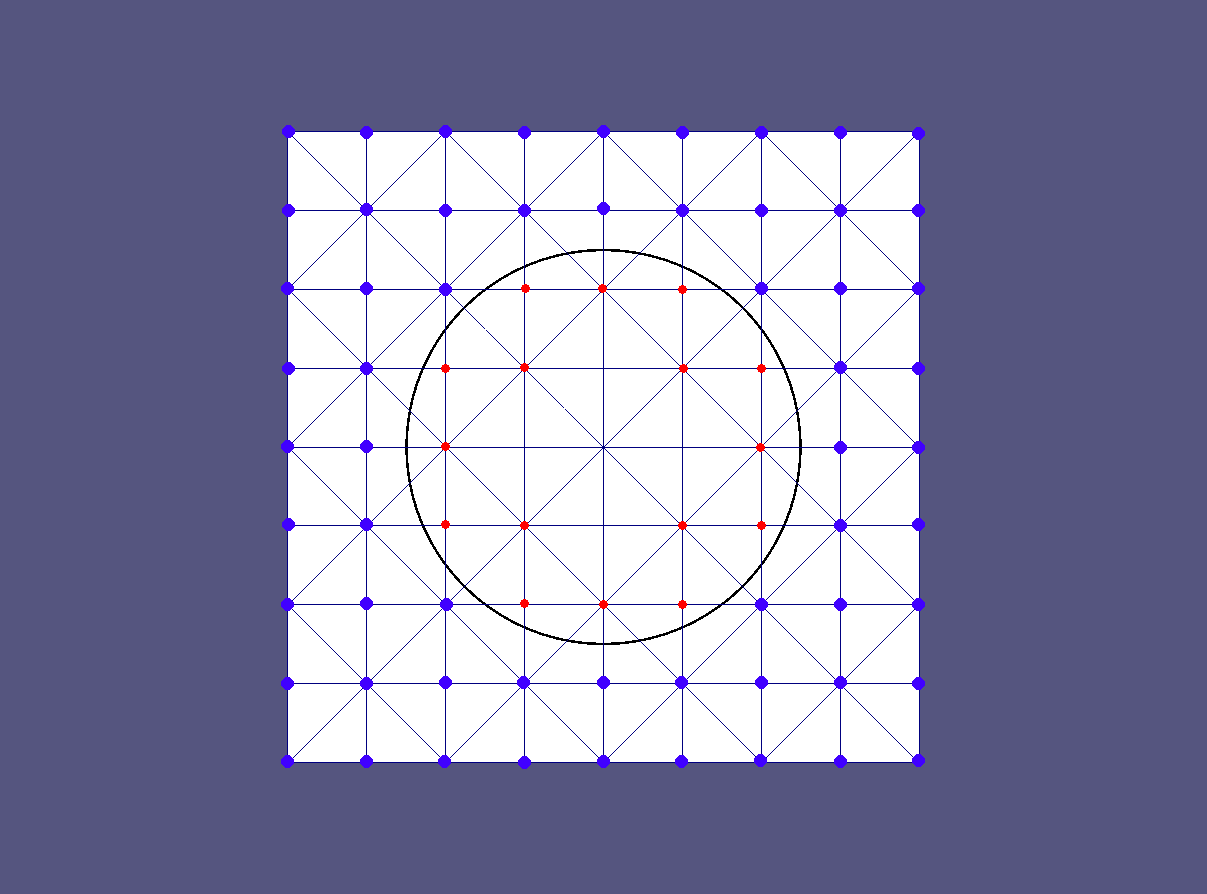}
	\hspace*{5pt}
	\includegraphics[trim = 10cm 4.5cm 10cm 4.5cm, clip, scale=0.22]{./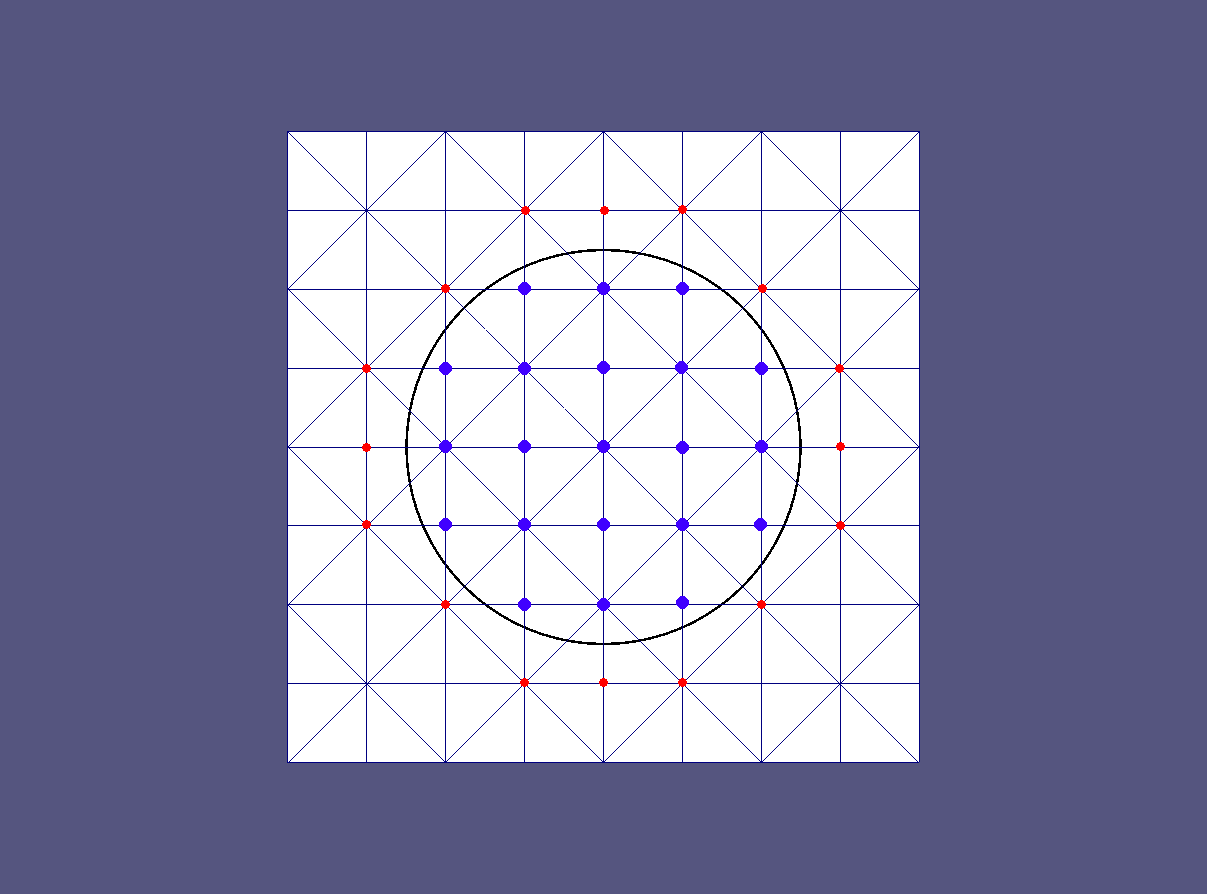}
	\hspace*{5pt}
	\includegraphics[trim = 10cm 4.5cm 10cm 4.5cm, clip, scale=0.22]{./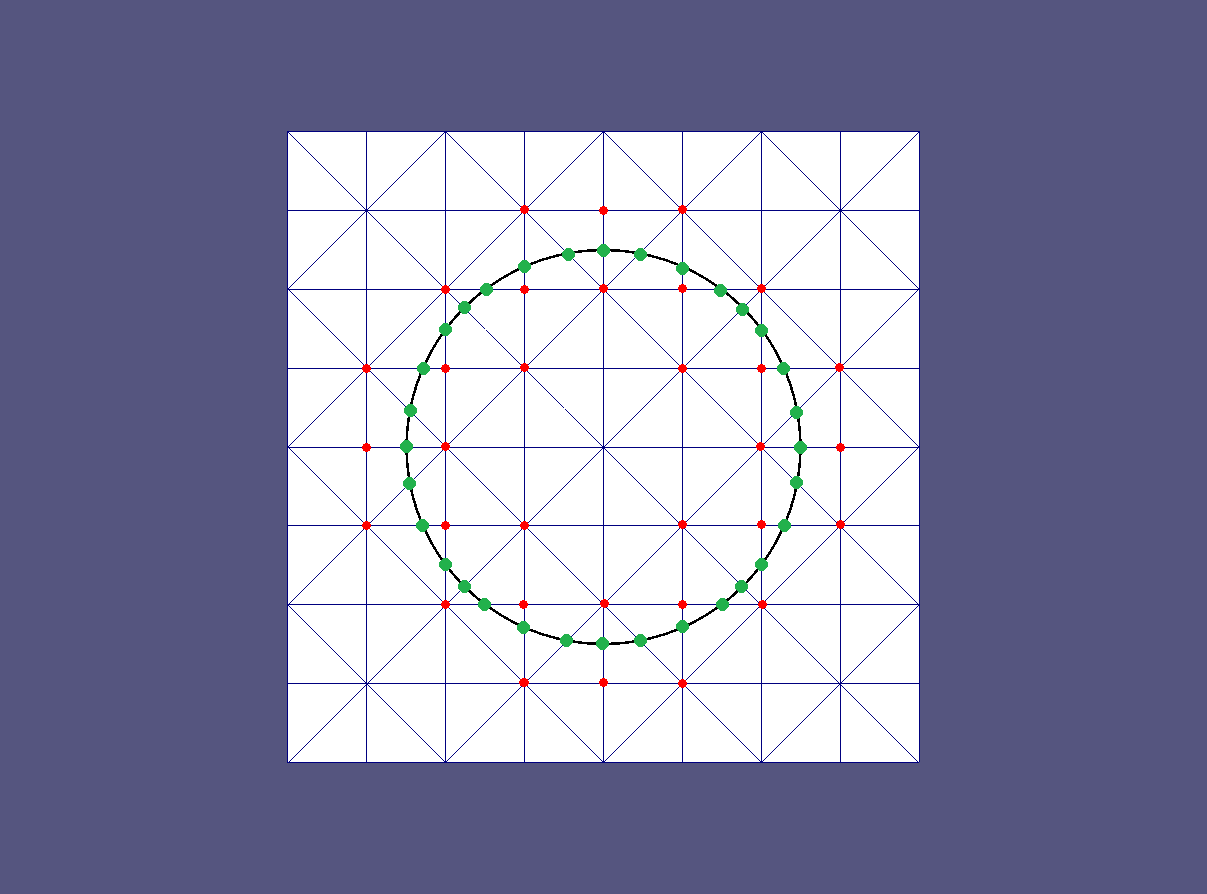}
	\vspace*{-10pt}
	\begin{figure}[H]
		\caption{Degrees of freedom used for each space: $\VV_h^+$ and $\Q_h^+$ (left), $\VV_h^-$ and $\Q_h^-$ (center), $\WW_h$ and $\ZZ_h$ (right). The blue base nodes are chosen for the degrees of freedom of the \textcolor{black}{fluid variables} ($\bu^\pm$ and $p^\pm$), while the red ones are just kept for localizing the \textcolor{black}{interface}, and cutting the standard basis functions. The green nodes are determined for defining the basis functions of the multipliers ($\blambda^\pm$ and $\Phi$) on the \textcolor{black}{interface}.}\hfill \\
		\label{fig-fictistyle}
	\end{figure}
\end{center}
\end{minipage}

Then Problem~\eqref{mainsys} is approximated as follows:
\begin{eqnarray}
& & \text{Find } (\bu_h^\pm,p_h^\pm, \blambda_h^\pm, \Phi_h) \in \VV_h^\pm \times \Q_h^\pm \times \WW_h \times \ZZ_h \text{ such that } \nonumber \\
& & \left\{ \begin{array} {rcl}
a_0^\pm(\bu_h^\pm,\bv_h^\pm) + b_0^\pm(\bv_h^\pm,p_h^\pm) + c_0(\bv_h^\pm,\blambda_h^\pm) = \mathcal{F}^\pm(\bv^\pm_h) & & 
\forall \bv_h^\pm \in \mathbf{V}_h^\pm, \\[5pt]
b_0^\pm(\bu_h^\pm,q_h^\pm) = 0 & &  \forall q_h^\pm \in \Q_h^\pm, \\[5pt]
c_0(\bu_h^\pm, \bmu_h^\pm) - c_0(\Phi_h, \bmu_h^\pm) = 0 & & \forall  \bmu_h^\pm \in \mathbf{W}_h, \\[5pt]
c_0(\bvarphi_h, \blambda_h^+ + \blambda_h^-) = \mathcal{G}(\bvarphi_h) & & 
\forall \bvarphi_h \in \mathbf{Z}_h,
\end{array}\right. \label{mainsysapprox} \label{sysdisc}
\end{eqnarray}
where we denote
\begin{eqnarray*}
a_0^\pm(\bu,\bv) = 2\nu^\pm\int_{\Omega^\pm} \varepsilon(\bu):\varepsilon(\bv) \, \d \Omega^\pm, 
\quad b_0^\pm(\bu, q) = -\int_{\Omega^\pm} q\divg \bu \, \d \Omega^\pm, 
\quad c_0(\bvarphi, \blambda)  =  - \int_{\Gamma} \bvarphi \cdot \blambda \, \d \Gamma.
\end{eqnarray*}

Note that the duality bracket $\langle \, \cdot \, ; \cdot \, \rangle_{\WW; \WW'}$, with $\WW = \HH^{-1/2}(\Gamma)$, has been replaced by the inner product of $\LL^2(\Gamma)$. \textcolor{black}{The aim is to avoid to define an approximation of} the Laplace–Beltrami operator on $\Gamma$, which is a non-trivial task because of the fictitious domain approach (see~\cite{Massing2017} for instance). Under stronger regularity assumptions for the data $\bff^\pm$ and $\bgg$ \textcolor{black}{(for instance $\bff^\pm \in \LL^2(\Omega^\pm)$ and $\bgg \in \LL^2(\Gamma)$)}, we can reasonably consider this simplification, and thus we now set
\begin{eqnarray*}
\WW \equiv \WW' \equiv \ZZ = \LL^2(\Gamma), 
\quad \WW_h \subset \LL^2(\Gamma),
\quad \ZZ_h \subset \LL^2(\Gamma).
\end{eqnarray*}
However, for the mathematical analysis, we keep the abstract formalism involving the notation $\WW$ and $\WW'$.

\subsection{Theoretical convergence} \label{subsec-theor}
For the mathematical analysis, we make the following assumptions:
\begin{itemize}
\item[$(\mathbf{H1})$:] There exists a constant\footnote{Throughout the paper, $C$ denotes a generic positive constant independent of the mesh size $h$.} $C >0$ independent of $h$ such that
\begin{eqnarray*}
\inf_{q_h^\pm \in \Q^\pm_h\setminus\{0\}} \sup_{\bv_h^\pm \in \VV^\pm_{0,h}\setminus\{0\}}
\frac{b_0^\pm(\bv^\pm_h,q^\pm_h)}{\| \bv^\pm_h \|_{\VV_h^\pm} \| q_h^\pm \|} & \geq & C,
\end{eqnarray*}
where we denote $\VV_{0,h}^\pm := \left\{\bv^\pm_h \in \VV_h^\pm \mid c_0(\bv_h^\pm, \bmu_h^\pm) = 0 \ \forall \bmu_h^\pm \in \WW_h \right\}$.
\item[$(\mathbf{H2})$:] \quad If $\overline{\bmu}_h \in \mathbf{W}_h$ satisfies $c^+_0(\bv_h^+, \overline{\bmu}_h) = 0$ for all $\bv_h^+ \in \mathbf{V}_h^{+}$, or $c^-_0(\bv_h^-, \overline{\bmu}_h) = 0$ for all $\bv_h^- \in \mathbf{V}_h^{-}$, then $\overline{\bmu}_h = 0$.
\end{itemize}

Assumption~$(\mathbf{H1})$ is a discrete inf-sup condition for the couple {\it velocity}/{\it pressure}. It implies in particular the following property: If $\overline{q}^\pm_h \in \Q^\pm_h$ satisfies $b^\pm_0(\bv_h^+, \overline{q}^\pm_h) = 0$ for all $\bv_h^+ \in \VV_{0,h}^\pm$, then $\overline{q}^\pm_h = 0$. Assumption~$(\mathbf{H2})$ is weaker than an inf-sup condition for the couple $\bu^\pm / \blambda^\pm$. It demands only than the spaces $\VV^\pm_h$ are rich enough with respect to the space $\WW_h$.\\

Now we define the space
\begin{eqnarray*}
	\mathbb{V}^0_h & = & \left\{
	(\bv_h^+,\bv_h^-) \in \mathbf{V}_h^+ \times \mathbf{V}_h^- | \quad c(\bv_h^+-\bv_h^-, \bmu_h)=0 \quad \forall \bmu_h \in \mathbf{W}_h \right\}.
\end{eqnarray*}

\begin{lemma} \label{lemma-coer}
The bilinear form
\begin{eqnarray*}
	((\bu^+,\bv^+),(\bu^-,\bv^-)) & \mapsto & 
	a_0^+(\bu^+,\bv^+) + a_0^-(\bu^-,\bv^-)
\end{eqnarray*}
is uniformly $\mathbb{V}_h^0$-elliptic, in the sense that there exists a constant $C>0$ independent of $h$ such that
\begin{eqnarray*}
a_0^{+}(\bv_h^+,\bv_h^+) + a_0^{-}(\bv_h^-,\bv_h^-) \geq 
C \left(\|\bv_h\|^2_{\mathbf{V}^{+}_h} + \|\bv_h\|^2_{\mathbf{V}^{-}_h} \right), \quad 
\forall (\bv_h^+,\bv_h^-) \in \mathbb{V}_h^0.
\end{eqnarray*}
\end{lemma}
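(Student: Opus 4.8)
The plan is to reduce the claimed uniform ellipticity to a Korn-type inequality on the fictitious subdomains $\Omega^\pm$, handling the two difficulties separately: the absence of a Dirichlet boundary condition on $\Gamma$ for the ``$-$'' side, and the fact that the fictitious spaces $\VV_h^\pm$ are restrictions to $\Omega^\pm$ of functions living on all of $\Omega$. First I would recall that, since $\bv_h^+$ vanishes on $\partial\Omega$, a standard Korn inequality on $\Omega^+$ gives $a_0^+(\bv_h^+,\bv_h^+) = 2\nu^+\int_{\Omega^+}|\varepsilon(\bv_h^+)|^2 \geq C\|\bv_h^+\|_{\VV_h^+}^2$ with $C$ independent of $h$; there is no subtlety here because the boundary piece $\partial\Omega$ is fitted and the Poincaré constant on $\Omega^+$ is a fixed geometric quantity. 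The real work is the ``$-$'' side, where $\bv_h^-\in\VV_h^-$ carries no boundary condition and $\Omega^-$ need not be convex or even connected a priori; there Korn's second inequality only yields $a_0^-(\bv_h^-,\bv_h^-)\geq C\big(\|\bv_h^-\|_{\LL^2(\Omega^-)}^2 + \|\nabla\bv_h^-\|_{\LL^2(\Omega^-)}^2\big)$ \emph{after} one has controlled the rigid-body (infinitesimal displacement) part of $\bv_h^-$, i.e.\ the kernel of $\varepsilon$, which consists of maps $\bx\mapsto \mathbf{a} + B\bx$ with $B$ skew-symmetric.

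The mechanism that kills this kernel is the coupling constraint defining $\mathbb{V}_h^0$, namely $c_0(\bv_h^+-\bv_h^-,\bmu_h)=0$ for all $\bmu_h\in\WW_h$, together with the already-proven control of $\bv_h^+$. The argument I would run is by contradiction / compactness in the spirit of Peetre--Tartar: suppose the inequality fails, so there is a sequence $(\bv_h^+,\bv_h^-)\in\mathbb{V}_h^0$ (over a sequence of mesh sizes $h\to 0$) with $\|\bv_h^+\|_{\VV_h^+}^2 + \|\bv_h^-\|_{\VV_h^-}^2 = 1$ but $a_0^+ + a_0^- \to 0$. From the ``$+$''-side Korn inequality, $\|\bv_h^+\|_{\VV_h^+}\to 0$. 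On the ``$-$'' side, $\|\varepsilon(\bv_h^-)\|_{\LL^2(\Omega^-)}\to 0$, so after extracting a weakly convergent subsequence the limit $\bv^-$ satisfies $\varepsilon(\bv^-)=0$, hence $\bv^-(\bx)=\mathbf{a}+B\bx$ is a rigid displacement; passing to the limit in the constraint, and using $\bv_h^+\to 0$, yields $c_0(\bv^-,\bmu)=0$, i.e.\ $\int_\Gamma \bv^-\cdot\bmu\,\d\Gamma = 0$ for all admissible test functions $\bmu$ on $\Gamma$. I would then invoke assumption $(\mathbf{H2})$ (which guarantees $\WW_h$ is seen faithfully by the velocity spaces, so the traces of rigid motions on $\Gamma$ are not orthogonal to all of $\WW_h$) to conclude that the trace of $\bv^-$ on $\Gamma$ vanishes; since a nonzero affine rigid displacement cannot vanish on the closed hypersurface $\Gamma$ (which is not contained in any affine subspace), this forces $\bv^-\equiv 0$. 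Then the usual Lions/Nečas argument upgrades weak to strong convergence $\bv_h^-\to 0$ in $\HH^1(\Omega^-)$, contradicting the normalization $\|\bv_h^+\|^2+\|\bv_h^-\|^2=1$.

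The step I expect to be the main obstacle is making this compactness argument genuinely $h$-uniform in the fictitious-domain setting: the spaces $\VV_h^\pm$ change with $h$ and live on cut elements, so ``extract a weakly convergent subsequence'' must be justified via a uniform bound $\|\bv_h^-\|_{\HH^1(\Omega^-)}\leq 1$ and the compact embedding $\HH^1(\Omega^-)\hookrightarrow\LL^2(\Omega^-)$ on the \emph{fixed} domain $\Omega^-$ — which is fine — but one must also be careful that the limit of the discrete constraint is the continuous constraint, and that $(\mathbf{H2})$ is exactly what bridges the discrete test space $\WW_h$ and the continuous statement. An alternative, perhaps cleaner, route that avoids compactness altogether is to prove directly that for $(\bv_h^+,\bv_h^-)\in\mathbb{V}_h^0$ one has $\|\bv_h^-\|_{\LL^2(\Omega^-)}\leq C\big(\|\varepsilon(\bv_h^-)\|_{\LL^2(\Omega^-)} + \|\bv_h^-\|_{\LL^2(\Gamma)}\big)$ by a standard quotient-norm Korn inequality on $\Omega^-$, then bound $\|\bv_h^-\|_{\LL^2(\Gamma)}\leq \|\bv_h^+\|_{\LL^2(\Gamma)} \leq C\|\bv_h^+\|_{\HH^1(\Omega^+)}$ using the constraint to replace $\bv_h^-$ by $\bv_h^+$ on $\Gamma$ (modulo the $\WW_h$-orthogonal complement, which is where $(\mathbf{H2})$ and a projection estimate enter), and finally close the estimate by absorbing the $\bv_h^+$ terms into $a_0^+$ via the ``$+$''-side Korn inequality; I would present this direct version if the constants can be tracked without a density/approximation hypothesis on $\WW_h$, and fall back to the compactness argument otherwise.
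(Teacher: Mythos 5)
Your overall skeleton (Korn on the $+$ side using the Dirichlet condition on $\p\Omega$, then a Peetre--Tartar/kernel argument in which the constraint defining $\mathbb{V}_h^0$ must eliminate rigid motions on the $-$ side) is the same as the paper's, but the step that actually kills the rigid motions is wrong as you have set it up. Assumption $(\mathbf{H2})$ is a non-degeneracy of the \emph{multiplier} space against the velocity spaces: it says that a $\overline{\bmu}_h\in\WW_h$ orthogonal to all discrete velocities must vanish, i.e.\ $\VV_h^\pm$ is rich relative to $\WW_h$. It does \emph{not} say that $\WW_h$ is rich enough to detect velocity traces, so from $c_0(\bv^-,\bmu_h)=0$ for all $\bmu_h\in\WW_h$ you cannot conclude $\bv^-_{|\Gamma}=0$ -- in the fictitious-domain setting $\WW_h$ may be very coarse (e.g.\ piecewise constants on cut segments), and orthogonality of the jump to $\WW_h$ is strictly weaker than trace equality. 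The same misuse reappears in your ``direct'' variant, where $(\mathbf{H2})$ is asked to control the $\WW_h$-orthogonal complement of the jump. Note also that the lemma is stated (and proved in the paper) \emph{without} $(\mathbf{H1})$--$(\mathbf{H2})$, so any proof that genuinely needs them is proving a weaker statement. A second gap is your passage to the limit in the constraint along $h\to 0$: the constraint only holds against the $h$-dependent space $\WW_h$, so obtaining $c_0(\bv^-,\bmu)=0$ for all continuous $\bmu$ requires a density/approximation property of $\bigcup_h\WW_h$ in $\LL^2(\Gamma)$ that is nowhere assumed; $(\mathbf{H2})$ does not supply it.

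The paper avoids both issues by verifying the kernel condition at the discrete level, with no $h\to 0$ limit: if $a_0^+(\bv_h^+,\bv_h^+)+a_0^-(\bv_h^-,\bv_h^-)=0$ then $\varepsilon(\bv_h^\pm)=0$, so both functions are affine; $\bv_h^+=0$ because it vanishes on $\p\Omega$; and for \emph{affine} pairs the membership in $\mathbb{V}_h^0$ already forces equality of the traces on $\Gamma$ (essentially because the trace of the affine jump is itself an admissible multiplier, so one can test the constraint with it), whence $\bv_h^-$ vanishes on the closed hypersurface $\Gamma$ and, being affine, vanishes identically. If you want to keep your compactness formulation, you must replace the appeal to $(\mathbf{H2})$ by this structural fact about affine functions and $\WW_h$ (or by an explicit approximation hypothesis on $\WW_h$), otherwise the rigid-motion kernel on the $-$ side is simply not eliminated and the argument does not close.
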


\begin{proof}
This result is an application of the Petree-Tartar lemma. From Korn's inequality we have
\begin{eqnarray*}
\| \bv_h^+\|_{\VV^+}^2 + \| \bv_h^-\|_{\VV^-}^2 & \leq &
a_0^{+}(\bv_h^+,\bv_h^+) + a_0^{-}(\bv_h^-,\bv_h^-) + 
\| \bv_h^+\|_{\LL^2(\Omega^+)}^2 + \| \bv_h^-\|_{\LL^2(\Omega^-)}^2.
\end{eqnarray*}
Since from the Rellich-Kondrachov theorem the embeddings $\VV^\pm \hookrightarrow \LL^2(\Omega^\pm)$ are compact, we just have to verify that \textcolor{black}{$a_0^{+}(\bv_h^+,\bv_h^+) + a_0^{-}(\bv_h^-,\bv_h^-) = 0  \Rightarrow  (\bv^+,\bv^-) = 0$} in $\mathbb{V}^0_h$. The equality of the left-hand-side of this assertion is equivalent to $a_0^{\pm}(\bv_h^\pm,\bv_h^\pm) = 0$, and so in particular $\varepsilon(\bv_h^\pm) = 0$ in $\Omega^\pm$. From~\cite[page~18]{Temam}, the functions $\bv_h^\pm$ reduce to affine forms. The function $\bv_h^+$ is actually $0$, because ${\bv_h^+}_{\mid \p \Omega } =0$. Since $(\bv_h^+,\bv_h^-) \in \mathbb{V}^0_h$, we deduce that ${\bv^-_h}_{\mid \Gamma} = 0$. Indeed, it is easy to see that the intersection of  $\mathbb{V}^0_h$ with the space of affine functions is actually contained into the space $\left\{
(\bv_h^+,\bv_h^-) \in \mathbf{V}_h^+ \times \mathbf{V}_h^- | \ {\bv_h^+}_{\mid \Gamma} = {\bv_h^-}_{\mid \Gamma} \right\}$.
Thus $\bv_h^- = 0$ in $\Omega^-$, which completes the proof.
\end{proof}

\begin{proposition}
Assume that assumptions~$(\mathbf{H1})-(\mathbf{H2})$ hold. Then system~\eqref{sysdisc} admits a unique solution that we denote by $(\bu_h^+,p_h^+,\blambda_h^+, \bu_h^-,p_h^-,\blambda_h^-,\Phi_h)$.
\end{proposition}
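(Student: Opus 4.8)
The plan is to cast the discrete system~\eqref{sysdisc} as a square linear system on a finite-dimensional space and prove that the associated homogeneous problem has only the trivial solution; since the system is square, uniqueness then yields existence. So I would set $\bff^\pm = 0$ and $\bgg = 0$ and show that any solution $(\bu_h^\pm,p_h^\pm,\blambda_h^\pm,\Phi_h)$ must vanish. First I would test the first equation of~\eqref{sysdisc} with $\bv_h^\pm = \bu_h^\pm$, the second with $q_h^\pm = p_h^\pm$, the third with $\bmu_h^\pm = \blambda_h^\pm$, and the fourth with $\bvarphi_h = \Phi_h$, then add all the resulting identities. The terms $b_0^\pm(\bu_h^\pm,p_h^\pm)$ cancel between the first and second equations, and the coupling terms $c_0(\bu_h^\pm,\blambda_h^\pm)$, $c_0(\Phi_h,\blambda_h^\pm)$, $c_0(\Phi_h,\blambda_h^++\blambda_h^-)$ telescope: the sum collapses to $a_0^+(\bu_h^+,\bu_h^+) + a_0^-(\bu_h^-,\bu_h^-) = 0$.

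Next I would observe that the third equation, $c_0(\bu_h^\pm-\Phi_h,\bmu_h^\pm)=0$ for all $\bmu_h^\pm\in\WW_h$, says precisely that $(\bu_h^+,\bu_h^-)\in\mathbb{V}_h^0$ (taking the difference of the $+$ and $-$ relations kills $\Phi_h$). Hence Lemma~\ref{lemma-coer} applies and gives $\|\bu_h^+\|_{\VV_h^+}^2 + \|\bu_h^-\|_{\VV_h^-}^2 \le C^{-1}(a_0^+(\bu_h^+,\bu_h^+)+a_0^-(\bu_h^-,\bu_h^-)) = 0$, so $\bu_h^+=0$ and $\bu_h^-=0$. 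With the velocities gone, the first equation of~\eqref{sysdisc} reduces to $b_0^\pm(\bv_h^\pm,p_h^\pm) + c_0(\bv_h^\pm,\blambda_h^\pm)=0$ for all $\bv_h^\pm\in\VV_h^\pm$. Restricting to $\bv_h^\pm\in\VV_{0,h}^\pm$ kills the $c_0$ term, leaving $b_0^\pm(\bv_h^\pm,p_h^\pm)=0$ on $\VV_{0,h}^\pm$; by the consequence of assumption~$(\mathbf{H1})$ noted in the text this forces $p_h^\pm=0$. Then the first equation becomes $c_0(\bv_h^\pm,\blambda_h^\pm)=0$ for all $\bv_h^\pm\in\VV_h^\pm$, i.e. $c_0^\pm(\bv_h^\pm,\blambda_h^\pm)=0$ on all of $\VV_h^\pm$; assumption~$(\mathbf{H2})$ then gives $\blambda_h^+=0$ and $\blambda_h^-=0$.

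It remains to recover $\Phi_h=0$. For this I would go back to the third equation with the $+$ relation alone, $c_0(\bu_h^+-\Phi_h,\bmu_h^+)=0$; since $\bu_h^+=0$ this reads $c_0(\Phi_h,\bmu_h^+)=0$ for all $\bmu_h^+\in\WW_h$. To conclude $\Phi_h=0$ I need that $\WW_h$ is rich enough to control $\ZZ_h$ on $\Gamma$ — this is the one point that is not literally one of $(\mathbf{H1})$--$(\mathbf{H2})$ as stated, and I expect it to be the main obstacle; in practice it holds because the fictitious-domain construction of Figure~\ref{fig-fictistyle} builds $\ZZ_h$ and $\WW_h$ from the same traces on $\Gamma$ so that $\ZZ_h\subseteq\WW_h$ (or at least that the $\LL^2(\Gamma)$ pairing between them is nondegenerate), which makes $c_0(\Phi_h,\cdot)=0$ on $\WW_h$ force $\Phi_h=0$. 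I would state this nondegeneracy explicitly (it is implicit in $(\mathbf{H2})$-type hypotheses and in the choice of degrees of freedom) and invoke it here. Once $\Phi_h=0$ is established, all components vanish, the homogeneous system has only the zero solution, and since the square linear operator behind~\eqref{sysdisc} is thereby injective it is bijective, giving existence and uniqueness for arbitrary data $\bff^\pm,\bgg$.
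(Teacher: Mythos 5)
Your proof follows essentially the same route as the paper: existence via uniqueness for the square finite-dimensional linear system, the same test functions ($\bu_h^\pm$, $p_h^\pm$, $\blambda_h^\pm$, $\Phi_h$) collapsing the sum to $a_0^+(\bu_h^+,\bu_h^+)+a_0^-(\bu_h^-,\bu_h^-)=0$, coercivity on $\mathbb{V}_h^0$ from Lemma~\ref{lemma-coer}, then $(\mathbf{H1})$ for the pressures and $(\mathbf{H2})$ for $\blambda_h^\pm$. On the point you flag: the paper simply invokes $(\mathbf{H2})$ once more in the third equation to conclude $\Phi_h=0$, i.e.\ it implicitly uses the nondegeneracy of the $\LL^2(\Gamma)$ pairing between $\ZZ_h$ and $\WW_h$ that you state explicitly; you are right that this is not literally $(\mathbf{H2})$ as written, but it is harmless in the paper's setting (in the experiments $\Phi$ and $\blambda^\pm$ use the same elements, so $\ZZ_h\subseteq\WW_h$ and one may test with $\bmu_h=\Phi_h$), so your added hypothesis is a clarification rather than a gap in your argument.
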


\begin{proof}
Since system~\eqref{sysdisc} is linear and of finite dimension, existence is equivalent to uniqueness. Let us prove uniqueness by showing that $(\bu_h^\pm,p_h^\pm, \blambda_h^\pm, \Phi_h) = 0$ when $\mathcal{F}^\pm \equiv 0$ and $\mathcal{G} \equiv  0$. In that case, taking $\bv^\pm_h = \bu_h^\pm$ in the first equation of~\eqref{sysdisc}, combined with its second equation taken with $q_h^\pm = p_h^\pm$, yields $a_0^\pm(\bu^\pm_h,\bu_h^\pm) + c_0(\bu^\pm_h, \blambda_h^\pm) = 0$. Using the third equation then leads us to $a_0^\pm(\bu^\pm_h,\bu_h^\pm) + c_0(\Phi_h, \blambda_h^\pm) = 0$. By summing these two identities, and by using the fourth equation with $\bvarphi_h = \Phi_h$, we obtain
\begin{eqnarray*}
a_0^+(\bu^+_h,\bu_h^+) + a_0^-(\bu^-_h,\bu_h^-) = 0,
\end{eqnarray*}
and so $\bu_h^\pm = 0$ in $\VV^\pm_h$ by Lemma~\ref{lemma-coer}. Next, still in the first equation, we get $b_0^\pm(\bv_h^\pm, p_h^\pm) = 0$ for all $\bv^\pm_h \in \VV^\pm_{0,h}$, and then $p_h^\pm = 0$ from assumption~$(\mathbf{H1})$. Then it remains only $c_0(\bv_h^\pm, \blambda_h^\pm) =0$, valid for all $\bv_h^\pm \in \VV_h^\pm$, and this yields $\blambda_h^\pm =0$ from assumption~$(\mathbf{H2})$. Finally, we obtain $\Phi_h=0$ by using assumption~$(\mathbf{H2})$ in the third equation.
\end{proof}

\begin{proposition} \label{prop-cvnaive}
Assume that assumptions~$(\mathbf{H1})-(\mathbf{H2})$ hold. Denote by $(\bu^+,p^+,\blambda^+, \bu^-,p^-,\blambda^-, \Phi)$ and \\$(\bu_h^+,p_h^+,\blambda_h^+, \bu_h^-,p_h^-,\blambda_h^-,\Phi_h)$ the respective solutions of system~\eqref{syscont} and system~\eqref{sysdisc}. Then
\begin{eqnarray}
& & \|\bu^+-\bu_h^+\|_{\mathbf{V}^+} + \|p^+-p_h^+\|_{\L^2(\Omega^+)} +
\|\bu^--\bu_h^-\|_{\mathbf{V}^-} + \|p^--p_h^-\|_{\L^2(\Omega^-)} \nonumber\\
& & \leq C\left( \inf_{(\bv_h^+,\bv_h^-)\in \mathbb{V}^0_h} 
\left(\|\bu^+-\bv_h^+ \|_{\VV^+} + \|\bu^--\bv_h^- \|_{\VV^-}\right) +
\inf_{q_h^+\in \Q_h^+}\|p^+-q_h^+\|_{\L^2(\Omega^+)} + \inf_{q_h^-\in \Q_h^-}\|p^--q_h^-\|_{\L^2(\Omega^-)} \right. \nonumber \\
& & \left. \qquad + \inf_{(\bmu_h^+,\bmu_h^-) \in \WW_h \times \WW_h} \left(\|\blambda^+ - \bmu_h^+\|_{\WW} ; 
\|\blambda^- - \bmu_h^-\|_{\WW}\right) \right), \label{est-naive}
\end{eqnarray}
where the constant $C>0$ is independent of $h$.
\end{proposition}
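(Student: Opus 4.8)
The plan is a C\'ea-type argument. Under the standing regularity assumptions and the identification $\WW\equiv\WW'\equiv\ZZ=\LL^2(\Gamma)$ adopted after~\eqref{sysdisc} (so that in particular $\blambda^\pm\in\LL^2(\Gamma)$), the discrete problem~\eqref{sysdisc} is conforming with respect to~\eqref{syscont}: the continuous solution satisfies the equations of~\eqref{sysdisc} when tested against functions of $\VV_h^\pm$, $\Q_h^\pm$, $\WW_h$, $\ZZ_h$. Subtracting, I obtain the Galerkin orthogonality relations; in particular, for all admissible discrete test functions,
\begin{eqnarray*}
a_0^\pm(\bu^\pm-\bu_h^\pm,\bv_h^\pm)+b_0^\pm(\bv_h^\pm,p^\pm-p_h^\pm)+c_0(\bv_h^\pm,\blambda^\pm-\blambda_h^\pm)=0, & & b_0^\pm(\bu^\pm-\bu_h^\pm,q_h^\pm)=0, \\
c_0(\bu^\pm-\bu_h^\pm,\bmu_h^\pm)=c_0(\Phi-\Phi_h,\bmu_h^\pm), & & c_0\big(\bvarphi_h,(\blambda^+-\blambda_h^+)+(\blambda^--\blambda_h^-)\big)=0.
\end{eqnarray*}
Three consequences will be used. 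From the second relation (together with $\divg\bu^\pm=0$), $\bu^\pm-\bu_h^\pm$ is annihilated by $b_0^\pm(\,\cdot\,,q_h^\pm)$ for every $q_h^\pm\in\Q_h^\pm$. Subtracting the $+$ and $-$ instances of the third relation and using $\bu^+=\bu^-$ on $\Gamma$ gives $c_0(\bu_h^+-\bu_h^-,\bmu_h)=0$ for all $\bmu_h\in\WW_h$, i.e. $(\bu_h^+,\bu_h^-)\in\mathbb{V}^0_h$; hence, for any competitor $(\bv_h^+,\bv_h^-)\in\mathbb{V}^0_h$, also $(\bu_h^+-\bv_h^+,\bu_h^--\bv_h^-)\in\mathbb{V}^0_h$. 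The fourth relation says that $(\blambda^++\blambda^-)-(\blambda_h^++\blambda_h^-)$ is $\LL^2(\Gamma)$-orthogonal to $\ZZ_h$.

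The crux is the interface consistency term. Fix $(\bv_h^+,\bv_h^-)\in\mathbb{V}^0_h$, $q_h^\pm\in\Q_h^\pm$, $\bmu_h^\pm\in\WW_h$, and put $\bw_h^\pm:=\bu_h^\pm-\bv_h^\pm$, so $(\bw_h^+,\bw_h^-)\in\mathbb{V}^0_h$. Inserting $\bmu_h^\pm$ into $\sum_\pm c_0(\bw_h^\pm,\blambda^\pm-\blambda_h^\pm)$ gives $\sum_\pm c_0(\bw_h^\pm,\blambda^\pm-\bmu_h^\pm)+\sum_\pm c_0(\bw_h^\pm,\bmu_h^\pm-\blambda_h^\pm)$. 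Since $\bmu_h^+-\blambda_h^+\in\WW_h$, the $\mathbb{V}^0_h$-constraint allows replacing $\bw_h^+$ by $\bw_h^-$ in the first term of the second sum, so that the second sum equals $c_0\big(\bw_h^-,(\bmu_h^++\bmu_h^-)-(\blambda_h^++\blambda_h^-)\big)$; writing $\blambda_h^++\blambda_h^-=(\blambda^++\blambda^-)-\big[(\blambda^++\blambda^-)-(\blambda_h^++\blambda_h^-)\big]$ and using that ${\bw_h^-}_{\mid \Gamma}\in\ZZ_h$ (a property of the discrete spaces built in Section~\ref{subsec-fict}), the fourth Galerkin relation removes the bracketed term, leaving $c_0\big(\bw_h^-,(\bmu_h^+-\blambda^+)+(\bmu_h^--\blambda^-)\big)$. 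With the trace inequality $\|\bw_h^\pm\|_{\LL^2(\Gamma)}\le C\|\bw_h^\pm\|_{\VV^\pm}$ this yields
\begin{eqnarray*}
\Big|\sum_\pm c_0(\bw_h^\pm,\blambda^\pm-\blambda_h^\pm)\Big| & \le & C\big(\|\bw_h^+\|_{\VV^+}+\|\bw_h^-\|_{\VV^-}\big)\big(\|\blambda^+-\bmu_h^+\|_{\WW}+\|\blambda^--\bmu_h^-\|_{\WW}\big).
\end{eqnarray*}
Note that $\Phi-\Phi_h$ has disappeared --- the velocity test space $\mathbb{V}^0_h$ does not see $\Phi$ --- which is why~\eqref{est-naive} carries no $\Phi$-approximation term.

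The remaining steps are classical. For the pressures, $(\mathbf{H1})$ gives, for each sign, $\|q_h^\pm-p_h^\pm\|\le C\sup_{\bv_h^\pm\in\VV^\pm_{0,h}\setminus\{0\}}b_0^\pm(\bv_h^\pm,q_h^\pm-p_h^\pm)/\|\bv_h^\pm\|_{\VV^\pm}$, and for $\bv_h^\pm\in\VV^\pm_{0,h}$ the first Galerkin relation --- in which $c_0(\bv_h^\pm,\blambda^\pm-\blambda_h^\pm)=c_0(\bv_h^\pm,\blambda^\pm-\bmu_h^\pm)$ because $c_0(\bv_h^\pm,\bmu_h^\pm)=0$ for all $\bmu_h^\pm\in\WW_h$ --- shows $b_0^\pm(\bv_h^\pm,p^\pm-p_h^\pm)=-a_0^\pm(\bu^\pm-\bu_h^\pm,\bv_h^\pm)-c_0(\bv_h^\pm,\blambda^\pm-\bmu_h^\pm)$; a triangle inequality then gives
\begin{eqnarray*}
\|p^\pm-p_h^\pm\|_{\L^2(\Omega^\pm)} & \le & C\big(\|p^\pm-q_h^\pm\|_{\L^2(\Omega^\pm)}+\|\bu^\pm-\bu_h^\pm\|_{\VV^\pm}+\|\blambda^\pm-\bmu_h^\pm\|_{\WW}\big).
\end{eqnarray*}
For the velocities, Lemma~\ref{lemma-coer} bounds $\|\bw_h^+\|_{\VV^+}^2+\|\bw_h^-\|_{\VV^-}^2$ by $a_0^+(\bw_h^+,\bw_h^+)+a_0^-(\bw_h^-,\bw_h^-)$; inserting $\bu^\pm$ and using the first Galerkin relation, $\sum_\pm a_0^\pm(\bu_h^\pm-\bu^\pm,\bw_h^\pm)=\sum_\pm b_0^\pm(\bw_h^\pm,p^\pm-p_h^\pm)+\sum_\pm c_0(\bw_h^\pm,\blambda^\pm-\blambda_h^\pm)$, whose $c_0$-part was just estimated. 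For the $b_0$-part, inserting $q_h^\pm$ and using $b_0^\pm(\bu_h^\pm,q_h^\pm-p_h^\pm)=0$ and $\divg\bu^\pm=0$ shows $b_0^\pm(\bw_h^\pm,q_h^\pm-p_h^\pm)=b_0^\pm(\bu^\pm-\bv_h^\pm,q_h^\pm-p_h^\pm)$; bounding $\|q_h^\pm-p_h^\pm\|\le\|q_h^\pm-p^\pm\|+\|p^\pm-p_h^\pm\|$, substituting the pressure estimate (with $\|\bu^\pm-\bu_h^\pm\|_{\VV^\pm}\le\|\bu^\pm-\bv_h^\pm\|_{\VV^\pm}+\|\bw_h^\pm\|_{\VV^\pm}$), and absorbing the $\|\bw_h^\pm\|_{\VV^\pm}^2$ terms into the left-hand side via Young's inequality, I get $\sum_\pm\|\bw_h^\pm\|_{\VV^\pm}\le C\big(\sum_\pm\|\bu^\pm-\bv_h^\pm\|_{\VV^\pm}+\sum_\pm\|p^\pm-q_h^\pm\|_{\L^2(\Omega^\pm)}+\sum_\pm\|\blambda^\pm-\bmu_h^\pm\|_{\WW}\big)$. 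A triangle inequality for $\bu^\pm-\bu_h^\pm$, re-injection into the pressure estimate, and passage to the infima over $(\bv_h^+,\bv_h^-)\in\mathbb{V}^0_h$, $q_h^\pm\in\Q_h^\pm$ and $\bmu_h^\pm\in\WW_h$ yield~\eqref{est-naive}.

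The only genuinely delicate step I foresee is the interface consistency term of the second paragraph: unlike in the standard Stokes analysis it cannot be made to vanish by a choice of discrete test function, and must be reduced to a best-approximation error for $\blambda^\pm$, which forces the simultaneous use of the $\mathbb{V}^0_h$-constraint, the $\Phi$-equation and the compatibility ${\bw_h^-}_{\mid \Gamma}\in\ZZ_h$ --- without them the (at this stage uncontrolled) errors of $\Phi$ and $\blambda^\pm$ would contaminate the bound. Coercivity on $\mathbb{V}^0_h$, the inf-sup $(\mathbf{H1})$ for the pressures, and the Young-inequality bookkeeping are routine; assumption $(\mathbf{H2})$, needed for well-posedness, plays no role in this a priori estimate.
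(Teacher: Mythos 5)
Your proof is correct, and its crucial ingredient is exactly the paper's: you eliminate the uncontrolled discrete multipliers $\blambda_h^\pm$ from the consistency term by combining the $\mathbb{V}^0_h$-constraint on the velocity test pair with the discrete and continuous versions of the fourth (transmission) equation, so that only a best-approximation error $\|\blambda^\pm-\bmu_h^\pm\|_{\WW}$ survives and no $\Phi$-term appears. The paper does the same manipulation, arriving at $c_0(\bw_h^--\bw_h^+,\blambda^--\bmu_h^-)$, and, like you, it implicitly tests the fourth discrete relation with the trace of a discrete velocity (i.e.\ it uses ${\bw_h^\pm}_{\mid\Gamma}\in\ZZ_h$, the compatibility you flag explicitly); so you assume nothing beyond what the paper itself uses, though it is worth noting that this does constrain $\ZZ_h$ relative to $\VV_h^\pm$. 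The only genuine difference is the packaging of the classical Stokes part: the paper feeds the error equations into the abstract saddle-point stability result (Lemma~\ref{lemma-SPP}) with $X=\mathbb{V}^0_h$ and $M=\Q_h^+\times\Q_h^-$, obtaining the velocity and pressure bounds in one stroke, whereas you derive the pressure bound directly from $(\mathbf{H1})$ and the velocity bound from the coercivity of Lemma~\ref{lemma-coer} with a Young-inequality absorption; the two routes are equivalent in substance and use the same hypotheses. A minor consequence of your bookkeeping is that your final multiplier contribution is the sum $\|\blambda^+-\bmu_h^+\|_{\WW}+\|\blambda^--\bmu_h^-\|_{\WW}$, while the paper, exploiting the symmetry of its single-term reduction, retains only one of the two (hence a minimum); note that your own interface computation actually collapses to $c_0(\bw_h^+-\bw_h^-,\blambda^+-\bmu_h^+)$, so the same sharpening is available to you there, the sum entering only through your per-sign pressure estimate — in any case this is immaterial for the convergence rates the proposition is used for.
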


\textcolor{black}{The proof of Proposition~\ref{prop-cvnaive} is given in section~\ref{App-A1}. It} provides us an estimate for the velocities and the pressures. We have no such estimate for the multipliers $\Phi$ and $\blambda^\pm$. Note that in the right-hand-side of estimate~\eqref{est-naive}, no term involving the variable $\Phi$ appears. The accuracy on the velocities and the pressures is then not conditioned by the approximation of the variable $\Phi$. We can have a good approximation of $\Phi$ without having necessarily a good approximation on the other variables.

\paragraph{On the limitation of the order of convergence.}

Besides the lack of information on the convergence for the dual variables, let us mention a theoretical result that limits the rate of convergence for the velocities and the pressures.

\begin{proposition} \label{prop-limit}
Assume that assumptions $(\mathbf{H1})-(\mathbf{H2})$ hold. With the notation of Proposition~\ref{prop-cvnaive}, assume that $\bu^\pm \in \HH^{1+d/2+\eta}(\Omega^\pm) \cap \VV^\pm$ fro some $\eta >0$, and that 
\begin{eqnarray*}
	\inf_{\bmu_h \in \WW_h}\| \blambda^\pm - \bmu_h\| & \leq & Ch^\delta
\end{eqnarray*}
for some $\delta \geq 1/2$. Then
\begin{eqnarray}
\| \bu^\pm - \bu^\pm_h \|_{\VV^\pm} 
+ \| p^\pm - p_h^\pm \|_{\Q^\pm}
& \leq & 
Ch^{1/2}. \label{est-limit}
\end{eqnarray}
\end{proposition}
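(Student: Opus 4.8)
The plan is to apply Proposition~\ref{prop-cvnaive} and to bound, by $Ch^{1/2}$, each of the three best-approximation quantities appearing on the right-hand side of~\eqref{est-naive}. The term carrying the multipliers is handled directly by the hypothesis: $\inf_{\bmu_h\in\WW_h}\|\blambda^\pm-\bmu_h\|\le Ch^\delta\le Ch^{1/2}$, since $\delta\ge 1/2$ and $h$ stays bounded. For the pressure term I would invoke elliptic regularity: since $\bff^\pm\in\LL^2(\Omega^\pm)$ and $\Delta\bu^\pm\in\HH^{d/2+\eta-1}(\Omega^\pm)\subset\LL^2(\Omega^\pm)$ (here $d\in\{2,3\}$ is used), the momentum equation $\nabla p^\pm=\bff^\pm+\nu^\pm\Delta\bu^\pm$ gives $p^\pm\in\HH^1(\Omega^\pm)$, whence the standard approximation estimate for the restricted pressure space (the zero-average normalisation being an inessential adjustment) yields $\inf_{q_h^\pm\in\Q_h^\pm}\|p^\pm-q_h^\pm\|_{\LL^2(\Omega^\pm)}\le Ch\,\|p^\pm\|_{\HH^1(\Omega^\pm)}\le Ch^{1/2}$.

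The heart of the matter is the constrained velocity term $\inf_{(\bv_h^+,\bv_h^-)\in\mathbb{V}^0_h}(\|\bu^+-\bv_h^+\|_{\VV^+}+\|\bu^--\bv_h^-\|_{\VV^-})$. The key observation is that a single global interpolant is automatically admissible. Since $\HH^{1+d/2+\eta}(\Omega^\pm)\hookrightarrow\mathcal{C}^1(\overline{\Omega^\pm})$ and $\bu^+=\bu^-$ on $\Gamma$, the glued field $\bu$ (equal to $\bu^+$ on $\Omega^+$ and to $\bu^-$ on $\Omega^-$) belongs to $\mathcal{C}^0(\overline\Omega)\cap\HH^1(\Omega)$ and vanishes on $\partial\Omega$. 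Let $I_h\bu\in\tilde\VV_h$ be its Lagrange interpolant and set $\bv_h^\pm:=(I_h\bu)_{|\Omega^\pm}$. Being the restrictions of one continuous function, $\bv_h^+$ and $\bv_h^-$ coincide on $\Gamma$, so the constraint defining $\mathbb{V}^0_h$ is satisfied and $(\bv_h^+,\bv_h^-)\in\mathbb{V}^0_h$. It thus remains to prove $\|\bu-I_h\bu\|_{\HH^1(\Omega^\pm)}\le Ch^{1/2}$.

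On the elements $T$ contained in $\overline{\Omega^+}$ or in $\overline{\Omega^-}$, $\bu$ coincides with one of the $\bu^\pm\in\HH^2(T)$, and the classical local interpolation estimate contributes a term of order $h$. On the $O(h^{-(d-1)})$ elements $T$ meeting both $\Omega^+$ and $\Omega^-$, the gradient of $\bu$ jumps across $\Gamma\cap T$ (because $[\bu]=0$ but $[\sigma(\bu,p)]\bn=\bgg$ need not vanish). To handle these I would write $\bu=\tilde\bu+\rho$ on $\Omega$, where $\tilde\bu\in\HH^{1+d/2+\eta}(\Omega)$ is a Sobolev extension of $\bu^+$ and $\rho:=\bu-\tilde\bu$ vanishes on $\overline{\Omega^+}$ and equals $\bu^--\tilde\bu$ on $\Omega^-$. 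As $\bu^--\tilde\bu\in\mathcal{C}^1(\overline{\Omega^-})$ and vanishes on $\Gamma$, on a cut element $T$ one has $|\rho|\le C\,\dist(\cdot,\Gamma)\le Ch_T$ and $|\nabla\rho|\le C$; using $|T\cap\Omega^-|\le Ch_T^{d}$ this gives $\|\rho\|_{\HH^1(T)}\le Ch_T^{d/2}$. Moreover every Lagrange node of a cut element lies within $h_T$ of $\Gamma$, hence $\|I_h\rho\|_{\LL^\infty(T)}\le Ch_T$, $\|I_h\rho\|_{\LL^2(T)}\le Ch_T^{1+d/2}$ and, by an inverse inequality, $\|I_h\rho\|_{\HH^1(T)}\le Ch_T^{-1}\|I_h\rho\|_{\LL^2(T)}\le Ch_T^{d/2}$; finally $\|\tilde\bu-I_h\tilde\bu\|_{\HH^1(T)}\le Ch_T|\tilde\bu|_{\HH^2(T)}$. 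Squaring, summing over the cut elements, and using $\sum_{T}h_T^{d}\le Ch^{d}\cdot h^{-(d-1)}=Ch$ together with $\sum_{T}h_T^2|\tilde\bu|^2_{\HH^2(T)}\le Ch^2\|\tilde\bu\|^2_{\HH^2(\Omega)}$, the cut-element contribution to $\|\bu-I_h\bu\|_{\HH^1(\Omega^\pm)}$ is $O(h^{1/2})$, which dominates the $O(h)$ from the interior elements. Hence $\|\bu-I_h\bu\|_{\HH^1(\Omega^\pm)}\le Ch^{1/2}$, the velocity term is $O(h^{1/2})$, and inserting the three bounds into~\eqref{est-naive} delivers~\eqref{est-limit}.

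I expect this last step to be the only genuine obstacle. Because $\bu$ is continuous across $\Gamma$ while its normal derivative jumps, the glued velocity is only in $\HH^{3/2-\varepsilon}(\Omega)$ for every $\varepsilon>0$, so a crude global approximation argument would produce merely $h^{1/2-\varepsilon}$ and lose the clean exponent. Recovering exactly $h^{1/2}$ forces one to confine the defect to the $O(h)$-wide band of cut elements and to exploit the $\mathcal{C}^1$ regularity of $\bu^\pm$ — which is precisely why the hypothesis asks for $d/2$ extra derivatives — in order to get the pointwise bound $|\rho|=O(h_T)$ there before trading it against the $O(h^{-(d-1)})$ count of cut elements. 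Note that the pressure term is in fact $O(h)$; the rate $h^{1/2}$ is imposed by the weak-continuity constraint on the velocity (and, in this unstabilised setting, by the $O(h^{1/2})$ approximability of the multipliers).
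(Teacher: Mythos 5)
The paper never writes out a proof of Proposition~\ref{prop-limit}: it explicitly defers to Proposition~3 of \cite{Renard2009} and Proposition~3 of \cite{Court2014}, which treat a Dirichlet-type fictitious boundary, and claims the argument transposes. Your proof is a correct, self-contained version of that transposition, and in one respect it is genuinely simpler than the cited route: because the interface condition here is a transmission condition, you may take both discrete velocities to be restrictions of one global interpolant $I_h\bu$ of the glued field, so that $\bv_h^+-\bv_h^-$ vanishes identically on $\Gamma$ and membership in $\mathbb{V}^0_h$ is exact — no correction for a weak constraint against $\WW_h$ is needed, unlike in the Dirichlet setting of the references. The rest is the standard mechanism behind such $h^{1/2}$ bounds: the defect is confined to the $O(h^{-(d-1)})$ cut elements, where the glued velocity has a gradient kink, and your splitting $\bu=\tilde\bu+\rho$ with $|\rho|\le C\,\dist(\cdot,\Gamma)$ there, combined with the $O(h)$ total measure of that band, is exactly how the exponent $1/2$ is recovered; this matches the spirit of the cited proofs even though the details differ. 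A few points should be made explicit to close the argument fully: (i) the pressure estimate uses $\bff^\pm\in\LL^2(\Omega^\pm)$, which in the paper is only a standing assumption introduced in section~\ref{subsec-fict}, and the ``inessential'' zero-mean adjustment is harmless only because $\tilde{\Q}_h$ contains functions supported in elements lying entirely inside $\Omega^-$, so the restriction map to $\Omega^\pm$ is not effectively constrained by the global zero-mean condition; (ii) the inverse inequality, the bound $\|I_h\rho\|_{\LL^\infty(T)}\le C\max_{\mathrm{nodes}}|\rho|$, and the $O(h^{-(d-1)})$ cut-cell count all rely on shape regularity and quasi-uniformity, which hold for the paper's Cartesian meshes but deserve a sentence; (iii) the pointwise bound $|\rho(\x)|\le C\,\dist(\x,\Gamma)$ on $T\cap\Omega^-$ requires that the distance be realized inside $\overline{\Omega^-}$, which is true for $h$ small since geodesic and Euclidean distances to a smooth closed $\Gamma$ are comparable in a tubular neighborhood. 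None of these caveats affects the validity of your argument, and the final assembly through estimate~\eqref{est-naive}, with the multiplier term controlled by the hypothesis $\delta\ge 1/2$, is exactly right.
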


For the sake of concision, we do not provide the proof of this result, since the proofs given in~\cite{Renard2009} (Proposition~3) and~\cite{Court2014} (Proposition~3) can be straightforwardly transposed to our case. In the context of fictitious domain approaches, it is classical to observe theoretically this kind of limitation on the order of convergence. Actually, for our approach, in view of the numerical tests presented in section~\ref{subsec-cvstab0}, estimate~\eqref{est-limit} seems to be not sharp enough. In practice, it is tricky to find a numerical test for which this rate is observed. Such a test was provided in~\cite{Renard2009} for the Poisson problem in a test performed with a very specific configuration, and simply not found in~\cite{Court2014} for a Stokes problem. A more advanced theoretical analysis of problem~\eqref{sysdisc} is not our main concern, and so we do not comment further this point. In the following subsection, we modify the variational formulation in order to obtain an optimal convergence result that concerns all the variables.

\subsection{Stabilization technique} \label{subsec-theorstab}
The Lagrangian $\mathscr{L}$ is augmented with quadratic terms as follows:
\begin{eqnarray*}
\mathscr{L}(\bu^+, p^+, \blambda^+, \bu^-,p^-, \blambda^-, \Phi) & = &
\mathscr{L}_0(\bu^+, p^+, \blambda^+, \bu^-,p^-, \blambda^-, \Phi) \\
& & + \frac{\alpha_0}{2} \left( \| \Phi - \bu^+ \|^2_{\ZZ} + \| \Phi - \bu^- \|^2_{\ZZ} \right) \\
& & - \frac{\gamma}{2} \left( \| \sigma(\bu^+,p^+)\bn^+ -\blambda^+ \|^2_{\LL^2(\Gamma)} + 
\| \sigma(\bu^+,p^+)\bn^+ -\blambda^- \|_{\LL^2(\Gamma)}^2 \right) .
\end{eqnarray*}
The additional terms constitute the so-called stabilization technique. \textcolor{black}{The first terms, proportional to the coefficient $\alpha_0 >0$, are} introduced in order to enforce the convergence for the variable $\Phi$. The other terms, those which are proportional to the coefficient $\gamma>0$, are added in order to enforce the convergence for the multipliers $\blambda^\pm$ towards the normal traces of the stress tensor. \textcolor{black}{For practical purpose,} it is convenient to choose the coefficient $\gamma$ proportional to the mesh size:
\begin{eqnarray*}
\gamma = \gamma_0h, & & \text{where $\gamma_0 >0$ is independent of $h$.}
\end{eqnarray*}
The first-order derivatives of $\mathscr{L}$ lead to the following stabilized formulation:
\begin{eqnarray}
& & \text{Find 
	$\mathfrak{u} \in \VV^+  \times \Q^+  \times \mathbf{W} \times \VV^-\times \Q^- \times \mathbf{W} \times \ZZ$ such that} \nonumber \\
& & \left\{ \begin{array} {rcl}
\mathcal{A}^\pm(\mathfrak{u};\bv) =
\mathcal{F}(\bv) & & 
\forall \bv \in \VV^\pm, \\[5pt]
\mathcal{B}^\pm(\mathfrak{u};q) = 0 & & 
\forall q \in \Q^\pm, \\[5pt]
\mathcal{C}^\pm(\mathfrak{u};\bmu) = 0 & & 
\forall \bmu \in \WW^\pm, \\[5pt]
\mathcal{D}^\pm(\mathfrak{u};\bvarphi) = 
\mathcal{G}(\bvarphi) & & 
\forall \bvarphi \in \ZZ ,
\end{array} \right.
\end{eqnarray}
where
\begin{eqnarray*}
	\mathcal{A}^\pm(\mathfrak{u};\bv) & = & 
	\mathcal{A}_0^\pm(\mathfrak{u};\bv)
	-\alpha_0 \int_\Gamma \Phi \cdot \bv^\pm \d \Gamma
	- \gamma \int_\Gamma 2\nu \varepsilon(\bv^\pm) \cdot \left(\sigma(\bu^\pm, p^\pm)\bn^\pm - \blambda^\pm \right) \d \Gamma,\\
	\mathcal{B}^\pm(\mathfrak{u};q) & = & \mathcal{B}_0^\pm(\mathfrak{u};q) 
	+ \gamma \int_\Gamma q^\pm \bn^\pm \cdot \left(\sigma(\bu^\pm, p^\pm)\bn^\pm - \blambda^\pm \right) \d \Gamma,\\
	\mathcal{C}^\pm(\mathfrak{u};\bmu) & = & 
	\mathcal{C}_0^\pm(\mathfrak{u};\bmu) + \gamma \int_\Gamma \bmu^\pm \cdot \left(\sigma(\bu^\pm, p^\pm)\bn^\pm - \blambda^\pm \right) \d \Gamma,\\
	\mathcal{D}^\pm(\mathfrak{u};\bvarphi) & = & 
	\mathcal{D}_0^\pm(\mathfrak{u};\bvarphi) 
	+ 2\alpha_0 \int_\Gamma \Phi \cdot \bvarphi \, \d \Gamma .
\end{eqnarray*}
With the introduction of the stabilization terms, the goal is to prove a theoretical result, namely Theorem~\ref{th-infsup}, leading to the optimal convergence of all the variables, in particular the multipliers. For that purpose, we make a list of assumptions:
\begin{itemize}
\item[$(\mathbf{A1})$:] There exists a constant $C >0$ independent of $h$ such that
\begin{eqnarray*}
	\inf_{q^{\pm}_h \in \Q^{\pm}_h} \sup_{\bv_h^{\pm} \in \mathbf{V}^{\pm}_{0,h}}
	\frac{b(\bv^{\pm}_h, q^{\pm}_h)}{\|q^{\pm}_h\|_{\Q^{\pm}_h} \|\bv^{\pm}_h\|_{\mathbf{V}^{\pm}_{h}}} & \geq & C. 
\end{eqnarray*}

\item[$(\mathbf{A2})$:] There exists $C>0$ independent of $h$ such that for all $\bv^{\pm}_h \in \mathbf{V}^{\pm}_h$ one has 
\begin{eqnarray*}
\qquad  h\|\varepsilon(\bv^{\pm}_h)\bn^\pm\|^2_{\LL^2(\Gamma)} \leq 
C\| \bv^{\pm}_h \|^2_{\mathbf{V}^{\pm}_h}.
\end{eqnarray*}

\item[$(\mathbf{A3})$:] There exists $C>0$ independent of $h$ such that for all $q^{\pm}_h \in \Q^{\pm}_h$ one has 
\begin{eqnarray*}h\|q^{\pm}_h\|^2_{\mathbb{L}^2(\Gamma)} \leq 
C\| q^{\pm}_h \|^2_{\Q^{\pm}_h}.
\end{eqnarray*}
\end{itemize}

Assumptions~$(\mathbf{A2})-(\mathbf{A3})$ are in the same fashion as those made in~\cite{Court2014}. In practice, we can consider that they are satisfied when the intersections of $\Omega^\pm$ with the simplices of the mesh are not too small. This question is discussed in~\cite[Section~6 and Appendix~B]{Renard2009}, where an alternative stabilization technique is proposed in order to avoid situations for which the geometric configuration would not allow the fulfillment of these assumptions. However, in practice, the frequency of these \textcolor{black}{geometric} situations \textcolor{black}{can be reduced (by refining the mesh locally for instance)}, and their impact on the accuracy of the method is quite negligible, so that it is reasonable to consider these assumptions. On the other hand, assumption~$(\mathbf{A1})$ (reproduced from assumption~$(\mathbf{H1})$) is considered, but there is {\it a priori} no reason that it is satisfied for a general geometric configuration. Its fulfillment could be enforced with an additional specific stabilization technique, but this point is not of our interest in this work. Sticking to assumption~$(\mathbf{A1})$ can be made in practice by choosing pair of elements of type $P_{k+1}$/$P_k$ for the couple {\it velocity}/{\it pressure}, as the so-called Taylor-Hood elements.\\

For the sake of concision, we now denote 
\begin{eqnarray*}
\mathfrak{u}_h = (\bu_h^+,p_h^+,\blambda_h^+, \bu_h^-,p_h^-,\blambda_h^-,\Phi_h)
& \text{ and } &
\mathfrak{v}_h = (\bv_h^+,q_h^+,\bmu_h^+, \bv_h^-,q_h^-,\bmu_h^-,\bvarphi_h). 
\end{eqnarray*}
The weak formulation of the approximated stabilized problem~\eqref{pbstabilized} is given in a compact form, as follows:
\begin{eqnarray}
& & \text{Find	$\mathfrak{u}_h \in \VV_h^+  \times \Q_h^+ \times \mathbf{W}_h \times \VV_h^-\times \Q_h^- \times \mathbf{W}_h \times \ZZ_h$ such that} \nonumber \\
& & \mathcal{M}(\mathfrak{u}_h, \mathfrak{v}_h)  =  \mathcal{H}(\mathfrak{v}_h) \qquad
\text{for all } \mathfrak{v}_h \in \VV_h^+  \times \Q_h^+ \times \mathbf{W}_h \times \VV_h^-\times \Q_h^- \times \mathbf{W}_h \times \ZZ_h.
\label{pbstabilized}
\end{eqnarray}
where
\begin{eqnarray*}
	\mathcal{M}(\mathfrak{u};\mathfrak{v}) &  =  & 
	2\nu^+ \int_{\Omega^+}\varepsilon(\bu^+):\varepsilon(\bv^+)\, \d \Omega^+ + 2\nu^- \int_{\Omega^+}\varepsilon(\bu^-):\varepsilon(\bv^-)\, \d \Omega^- \\
	& & - \int_{\Omega^+} \left( p^+\divg \bv^+ + q^+\divg \bu^+ \right)\d \Omega^+ - \int_{\Omega^-} \left( p^-\divg \bv^- + q^-\divg \bu^- \right)\d \Omega^- \\
	& & - \int_{\Gamma} \left( \blambda^+ \cdot (\bv^+-\bvarphi) + \bmu^+\cdot (\bu^+ - \Phi)\right)\d \Gamma - \int_{\Gamma} \left( \blambda^- \cdot (\bv^--\bvarphi) + \bmu^-\cdot (\bu^- - \Phi)\right)\d \Gamma \\
	& & + \alpha_0 \int_{\Gamma} (\bu^+ -\Phi) \cdot (\bv^+ - \bvarphi) \, \d \Gamma 
	+ \alpha_0 \int_{\Gamma} (\bu^- -\Phi) \cdot (\bv^- - \bvarphi) \, \d \Gamma \\
	& & -\gamma_0 h\int_{\Gamma} (2\nu \varepsilon(\bu^+)\bn^+ - p^+\bn^+ - \blambda^+) \cdot 
	(2\nu \varepsilon(\bv^+)\bn^+ - q^+\bn^+ - \bmu^+)\, \d \Gamma \\
	& & -\gamma_0 h\int_{\Gamma} (2\nu \varepsilon(\bu^-)\bn^- - p^-\bn^- - \blambda^-) \cdot 
	(2\nu \varepsilon(\bv^-)\bn^- - q^-\bn^- - \bmu^-)\, \d \Gamma, \\
	\mathcal{H}(\mathfrak{v}) & = & \int_{\Omega^+} \bff^+ \cdot \bv^+  \d \Omega^+
	+ \int_{\Omega^-} \bff^- \cdot \bv^-  \d \Omega^-
	+ \int_{\Gamma} \bgg \cdot \varphi  \, \d \Gamma.
\end{eqnarray*}\textbf{}

Here again, in the approximated problem we have replaced the duality brackets $\langle \, \cdot \, ; \cdot \, \rangle_{\WW', \WW}$ by $\LL^2(\Gamma)$ scalar products. We are now in position to \textcolor{black}{establish} a discrete inf-sup condition for the stabilized problem.

\begin{theorem} \label{th-infsup}
	Assume that $(\mathbf{A1})$--$(\mathbf{A3})$ hold. Then, for $\alpha_0$ and $\gamma_0$ small enough, there exists  a constant $C>0$ independent of the mesh size $h$ such that
	\begin{eqnarray*}
	\inf_{\mathfrak{u}_h \in \mathfrak{V}_h} \sup_{\mathfrak{v}_h \in \mathfrak{V}_h}
	\frac{\mathcal{M}(\mathfrak{u}_h;\mathfrak{v}_h)}
	{|||\, \mathfrak{u}_h\, |||\ |||\, \mathfrak{v}_h\, |||} & \geq & C,
	\end{eqnarray*}
	where $\mathfrak{V}_h =\VV_h^+  \times \Q_h^+ \times \mathbf{W}_h \times \VV_h^-\times \Q_h^- \times \mathbf{W}_h \times \ZZ_h$, and where the norm $|||\, \cdot \, |||$ is defined as follows:
	\begin{eqnarray*}
	|||\, \mathfrak{u}\, |||^2 & = & \|\bu^+\|^2_{\mathbf{V}^+} + \|p^+\|_{\Q^{+}} 
	+ \|\bu^-\|^2_{\mathbf{V}^-} + \|p^-\|_{\Q^{-}} 
	+h \|\blambda^+\|^2_{\mathbf{L}^2(\Gamma)} +h \|\blambda^-\|^2_{\mathbf{L}^2(\Gamma)} \\
	& & + h\left(\|\varepsilon(\bu^+)\bn\|^2_{\mathbf{L}^2(\Gamma)} +
	\|\varepsilon(\bu^-)\bn\|^2_{\mathbf{L}^2(\Gamma)} + 
	\|p^-\|^2_{\mathbf{L}^2(\Gamma)} + \|p^-\|^2_{\mathbf{L}^2(\Gamma)}\right) \\
	& & +\frac{1}{h}\|\bu^+-\Phi\|^2_{\mathbf{L}^2(\Gamma)}
	+\frac{1}{h}\|\bu^--\Phi\|^2_{\mathbf{L}^2(\Gamma)} + \| \Phi \|^2_{\mathbf{W}}.
	\end{eqnarray*}
\end{theorem}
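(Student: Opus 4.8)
The plan follows the standard route for a discrete inf--sup estimate: given an arbitrary $\mathfrak{u}_h\in\mathfrak{V}_h$, I construct $\mathfrak{v}_h\in\mathfrak{V}_h$ with $\mathcal{M}(\mathfrak{u}_h;\mathfrak{v}_h)\ge C_1\,|||\,\mathfrak{u}_h\,|||^2$ and $|||\,\mathfrak{v}_h\,|||\le C_2\,|||\,\mathfrak{u}_h\,|||$, both constants independent of $h$; dividing yields the claim with $C=C_1/C_2$. I would take $\mathfrak{v}_h=\mathfrak{u}_h^{\flat}+\delta\,\mathfrak{w}_h+\delta'\,\mathfrak{r}_h$, where $\delta$, $\delta'$, $\gamma_0$, $\alpha_0$ are small positive parameters fixed at the end of the argument (in an order to be specified). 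Here $\mathfrak{u}_h^{\flat}$ is the sign-adjusted mirror of $\mathfrak{u}_h$ in the spirit of Barbosa--Hughes, $\mathfrak{u}_h^{\flat}=(\bu_h^{+},-p_h^{+},-\blambda_h^{+},\bu_h^{-},-p_h^{-},-\blambda_h^{-},\Phi_h)$; $\mathfrak{w}_h$ is a Fortin lift carrying, in the two velocity slots only, fields $\bw_h^{\pm}\in\mathbf{V}^{\pm}_{0,h}$ supplied by $(\mathbf{A1})$ with $b_0^{\pm}(\bw_h^{\pm},p_h^{\pm})=\|p_h^{\pm}\|^2_{\Q^{\pm}}$ and $\|\bw_h^{\pm}\|_{\mathbf{V}^{\pm}}\le C\|p_h^{\pm}\|_{\Q^{\pm}}$; and $\mathfrak{r}_h$ carries $-h^{-1}(\bu_h^{\pm}-\Phi_h)|_{\Gamma}$ in the two $\bmu^{\pm}$-slots only (this lies in $\mathbf{W}_h$, and $\mathbf{V}^{\pm}_{0,h}$ consists of functions vanishing on $\Gamma$, under the standing compatibility between $\mathbf{W}_h$ and the traces of $\mathbf{V}_h^{\pm}$ and $\mathbf{Z}_h$).

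Evaluating $\mathcal{M}(\mathfrak{u}_h;\mathfrak{u}_h^{\flat})$, the indefinite bulk pressure terms and the linear interface couplings cancel by the sign flip (the choice $q_h^{\pm}=-p_h^{\pm}$ being exactly what kills the bulk pressure term), while the two stabilization groups change sign: one is left with $a_0^{+}(\bu_h^{+},\bu_h^{+})+a_0^{-}(\bu_h^{-},\bu_h^{-})$, the non-negative penalty $\alpha_0\sum_{\pm}\|\bu_h^{\pm}-\Phi_h\|^2_{\mathbf{L}^2(\Gamma)}$, a harmful $-\gamma_0 h\sum_{\pm}\|2\nu\varepsilon(\bu_h^{\pm})\bn^{\pm}\|^2_{\mathbf{L}^2(\Gamma)}$, and a beneficial $+\gamma_0 h\sum_{\pm}\|p_h^{\pm}\bn^{\pm}+\blambda_h^{\pm}\|^2_{\mathbf{L}^2(\Gamma)}$. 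By $(\mathbf{A2})$ the harmful term is a $\gamma_0$-multiple of $\|\bu_h^{\pm}\|^2_{\mathbf{V}^{\pm}}$, absorbed into $a_0^{\pm}$ for $\gamma_0$ small; from $\|p_h^{\pm}\bn^{\pm}+\blambda_h^{\pm}\|^2\ge\tfrac12\|\blambda_h^{\pm}\|^2-\|p_h^{\pm}\|^2_{\mathbf{L}^2(\Gamma)}$ and $(\mathbf{A3})$ one keeps a positive multiple of $h\,\|\blambda_h^{\pm}\|^2_{\mathbf{L}^2(\Gamma)}$ at the expense of $\|p_h^{\pm}\|^2_{\Q^{\pm}}$. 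The missing pressure norm is restored by $\delta\,\mathcal{M}(\mathfrak{u}_h;\mathfrak{w}_h)$, whose $b$-contribution equals $\delta\sum_{\pm}\|p_h^{\pm}\|^2_{\Q^{\pm}}$: the cross terms generated by $\delta\bw_h^{\pm}$ vanish on $\Gamma$ because $\bw_h^{\pm}|_{\Gamma}=0$ (in particular $\delta\int_\Gamma\blambda_h^{\pm}\cdot\bw_h^{\pm}\,\d\Gamma=0$), and reduce in the bulk to $\delta\,a_0^{\pm}(\bu_h^{\pm},\bw_h^{\pm})$, absorbed by Young for $\delta$ small. Finally $\delta'\,\mathcal{M}(\mathfrak{u}_h;\mathfrak{r}_h)$ produces, through the coupling $-\int_\Gamma\bmu^{\pm}\cdot(\bu^{\pm}-\Phi)\,\d\Gamma$, the term $\delta' h^{-1}\sum_{\pm}\|\bu_h^{\pm}-\Phi_h\|^2_{\mathbf{L}^2(\Gamma)}$ plus a cross term with the consistency stabilization of the form $\gamma_0\,\|2\nu\varepsilon(\bu_h^{\pm})\bn^{\pm}-p_h^{\pm}\bn^{\pm}-\blambda_h^{\pm}\|_{\mathbf{L}^2(\Gamma)}\,\|\bu_h^{\pm}-\Phi_h\|_{\mathbf{L}^2(\Gamma)}$, which Young's inequality together with $(\mathbf{A2})$--$(\mathbf{A3})$ splits into a part absorbed into $\|\bu_h^{\pm}\|^2_{\mathbf{V}^{\pm}}+\|p_h^{\pm}\|^2_{\Q^{\pm}}+h\,\|\blambda_h^{\pm}\|^2_{\mathbf{L}^2(\Gamma)}$ and a part absorbed into $h^{-1}\|\bu_h^{\pm}-\Phi_h\|^2_{\mathbf{L}^2(\Gamma)}$, both controlled for $\gamma_0$ small.

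Summing, after all absorptions one gets $\mathcal{M}(\mathfrak{u}_h;\mathfrak{v}_h)\ge C\big(a_0^{+}(\bu_h^{+},\bu_h^{+})+a_0^{-}(\bu_h^{-},\bu_h^{-})+\sum_{\pm}h^{-1}\|\bu_h^{\pm}-\Phi_h\|^2_{\mathbf{L}^2(\Gamma)}+\sum_{\pm}h\,\|\blambda_h^{\pm}\|^2_{\mathbf{L}^2(\Gamma)}+\sum_{\pm}\|p_h^{\pm}\|^2_{\Q^{\pm}}\big)$, and it remains to recognise $|||\,\mathfrak{u}_h\,|||^2$ in the right-hand side. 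Korn's inequality with the homogeneous condition on $\p\Omega$ gives $a_0^{+}(\bu_h^{+},\bu_h^{+})\ge c\,\|\bu_h^{+}\|^2_{\mathbf{V}^{+}}$; for the inner phase, as in Lemma~\ref{lemma-coer}, Korn's inequality with boundary data in $\mathbf{L}^2(\Gamma)$ yields $\|\bu_h^{-}\|^2_{\mathbf{V}^{-}}\le C\big(a_0^{-}(\bu_h^{-},\bu_h^{-})+\|\bu_h^{-}\|^2_{\mathbf{L}^2(\Gamma)}\big)$, and $\|\bu_h^{-}\|_{\mathbf{L}^2(\Gamma)}\le\|\bu_h^{+}\|_{\mathbf{L}^2(\Gamma)}+\|\bu_h^{+}-\Phi_h\|_{\mathbf{L}^2(\Gamma)}+\|\bu_h^{-}-\Phi_h\|_{\mathbf{L}^2(\Gamma)}$ together with the trace inequality closes the estimate for $\|\bu_h^{-}\|^2_{\mathbf{V}^{-}}$ and for $\|\Phi_h\|^2_{\mathbf{W}}$; the interface terms $h\,\|\varepsilon(\bu_h^{\pm})\bn\|^2_{\mathbf{L}^2(\Gamma)}$ and $h\,\|p_h^{\pm}\|^2_{\mathbf{L}^2(\Gamma)}$ of $|||\,\cdot\,|||$ are dominated through $(\mathbf{A2})$--$(\mathbf{A3})$. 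The reverse bound $|||\,\mathfrak{v}_h\,|||\le C\,|||\,\mathfrak{u}_h\,|||$ is checked slot by slot, again using $(\mathbf{A1})$--$(\mathbf{A3})$ and the trace inequality for the $\delta\bw_h^{\pm}$ and $\delta'\mathfrak{r}_h$ contributions.

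The main obstacle is the bookkeeping of the small parameters: each absorption (of $-\gamma_0 h\,\|\varepsilon(\bu_h)\bn\|^2$ into $a_0$, of $h\,\|p_h\|^2_{\mathbf{L}^2(\Gamma)}$ into $\|p_h\|^2_{\Q}$, of the $\delta\bw_h$ bulk cross term, of the $\delta'$ cross term with the consistency stabilization) consumes a fixed fraction of a positive quantity, so one must verify that fixing $\alpha_0$, then $\gamma_0$, then $\delta$ and $\delta'$ in the right order leaves a strictly positive form with an $h$-independent constant — noting that the constants multiplying $\|\bu_h^{-}\|^2_{\mathbf{V}^{-}}$ and $\|\Phi_h\|^2_{\mathbf{W}}$ come out proportional to $\alpha_0^{-1}$, which is harmless since the final $C$ may depend on $\alpha_0$ and $\gamma_0$. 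The second, genuinely interface-specific, point is that the whole construction rests on the compatibility of the unfitted discrete spaces on $\Gamma$ — that $\mathbf{W}_h$ contain, up to a negligible defect, the traces of $\mathbf{V}_h^{\pm}$ and the space $\mathbf{Z}_h$, so that $\mathbf{V}^{\pm}_{0,h}$ reduces to functions vanishing on $\Gamma$ and the penalty on $\bu_h^{\pm}-\Phi_h$ really controls the interface contributions to $|||\,\cdot\,|||$ — which is the geometric content behind $(\mathbf{A2})$--$(\mathbf{A3})$ and the reason one restricts to structured meshes; this is also the part for which the transposition from~\cite{Court2014} is least mechanical.
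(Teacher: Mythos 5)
Your overall strategy coincides with the paper's: the sign-flipped copy $\mathfrak{u}_h^{\flat}$ is exactly the paper's first test function, the pressure lift $\mathfrak{w}_h$ from $(\mathbf{A1})$ is its second, and a multiplier-slot penalty is its third, followed by the same kind of parameter bookkeeping. However, there are two genuine gaps. First, your third test function $\mathfrak{r}_h$, carrying $-h^{-1}(\bu_h^{\pm}-\Phi_h)|_{\Gamma}$ in the $\bmu^{\pm}$-slots, is in general \emph{not} an element of $\mathfrak{V}_h$: the trace of $\bu_h^{\pm}$ and the function $\Phi_h\in\ZZ_h$ do not belong to $\WW_h$, which is built from the traces of a separate space $\tilde{\WW}_h$ in this unfitted setting. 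The ``standing compatibility'' you invoke is not among the hypotheses $(\mathbf{A1})$--$(\mathbf{A3})$ and is false for the element pairs actually used (e.g.\ P2 velocities with P0 or P1 multipliers). The paper repairs precisely this point by taking $\bmu_h^{\pm}=-h^{-1}\mathcal{P}_h(\bu_h^{\pm}-\Phi_h)$ with $\mathcal{P}_h$ the $\LL^2(\Gamma)$-projection onto $\WW_h$, and then uses the projection-error estimate $\|\mathcal{P}_h\bv-\bv\|_{\LL^2(\Gamma)}\le Ch^{1/2}\|\bv\|_{\HH^{1/2}(\Gamma)}$ (Lemma~\ref{lemma-projector}) to still recover the full term $h^{-1}\|\bu_h^{\pm}-\Phi_h\|^2_{\LL^2(\Gamma)}$, at the price of extra $\HH^{1/2}(\Gamma)$ terms that are absorbed by the bulk velocity norms and $\|\Phi_h\|_{\ZZ}$. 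Without this projection step your construction does not produce an admissible test function, and this is not a cosmetic issue: it is the reason the lemma on $\mathcal{P}_h$ is needed at all.

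Second, your claim that $\VV^{\pm}_{0,h}$ ``consists of functions vanishing on $\Gamma$'' is incorrect: membership in $\VV^{\pm}_{0,h}$ only means that the trace is $\LL^2(\Gamma)$-orthogonal to $\WW_h$. Consequently the interface cross terms generated by $\delta\bw_h^{\pm}$ do not all vanish. The coupling $-\int_{\Gamma}\blambda_h^{\pm}\cdot\bw_h^{\pm}\,\d\Gamma$ does vanish (since $\blambda_h^{\pm}\in\WW_h$), but the $\alpha_0$-penalty cross term $\alpha_0\int_{\Gamma}(\bu_h^{\pm}-\Phi_h)\cdot\bw_h^{\pm}\,\d\Gamma$ does not, and the consistency-stabilization cross term $-\gamma_0 h\int_{\Gamma}\bigl(2\nu\varepsilon(\bu_h^{\pm})\bn^{\pm}-p_h^{\pm}\bn^{\pm}-\blambda_h^{\pm}\bigr)\cdot 2\nu\varepsilon(\bw_h^{\pm})\bn^{\pm}\,\d\Gamma$ would not vanish even if the trace of $\bw_h^{\pm}$ were zero, because it involves $\varepsilon(\bw_h^{\pm})\bn^{\pm}$ on $\Gamma$. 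These terms must be estimated, as in the paper's Step~2, by Cauchy--Schwarz, Young and $(\mathbf{A2})$--$(\mathbf{A3})$ together with $\|\bw_h^{\pm}\|_{\VV^{\pm}}\le C\|p_h^{\pm}\|_{\Q^{\pm}}$, and they consume part of the pressure gain, which affects the order in which $\alpha$, $\beta$, $\alpha_0$, $\kappa_2$, $\kappa_3$, $\gamma_0$ are fixed. A minor further remark: the paper sidesteps the Korn-type argument you sketch for $\|\bu_h^{-}\|_{\VV^-}$ and $\|\Phi_h\|_{\WW}$ by redefining $\|\cdot\|_{\VV^{\pm}}$ as the $\varepsilon$-seminorm (with a zero-average constraint on a portion $\Gamma_D\subset\Gamma$ for $\VV^-$); your route via Korn with $\LL^2(\Gamma)$ data and the gained penalty terms is viable but must be carried out with the same quantitative care, since it again consumes fractions of the $h^{-1}\|\bu_h^{\pm}-\Phi_h\|^2_{\LL^2(\Gamma)}$ terms.
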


\textcolor{black}{For the sake of clarity, we give the proof of Theorem~\ref{th-infsup} in section~\ref{App-A2}.} The consequence of this result is the optimal order of convergence for the multipliers, stated as follows:
\begin{corollary} \label{supercoro}
Denote by $k_{\bu}$, $k_p$, $k_\lambda$ and $k_{\Phi}$ the respective degrees of standard finite elements used for the velocities $\bu^\pm$, the pressures $p^\pm$ and the multipliers $\blambda^\pm$ and $\Phi$. Then
\begin{eqnarray*}
& & \max\left(\| \bu^\pm - \bu^\pm_h\|_{\VV^\pm}, \|p^\pm-p_h^\pm\|_{\Q^\pm}, h\|\blambda^\pm-\blambda^\pm_h\|_{\WW}, \|\Phi - \Phi_h\|_{\ZZ} \right) \\
& & \leq C\left( h^{k_{\bu}}\|\bu^+\|_{\HH^{k_{\bu}+1}(\Omega^+)} 
	+h^{k_{p}+1}\|p^+\|_{\H^{k_p+1}(\Omega^+)}
	+h^{k_{\blambda}+1} \|\blambda^+\|_{\HH^{k_{\blambda}+1/2}(\Gamma)} \right. \\
	& & \left. + h^{k_{\Phi}+1} \| \Phi\|_{\HH^{k_{\blambda}+1/2}(\Gamma)}
	+h^{k_{\bu}}\|\bu^-\|_{\HH^{k_{\bu}+1}(\Omega^-)}
	+ h^{k_{p}+1}\|p^-\|_{\H^{k_p+1}(\Omega^-)}  
	+h^{k_{\blambda}+1} \|\blambda^-\|_{\HH^{k_{\blambda}+1/2}(\Gamma)}\right).
\end{eqnarray*}
\end{corollary}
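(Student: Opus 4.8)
The plan is to deduce Corollary~\ref{supercoro} from Theorem~\ref{th-infsup} by the standard Babuška-type argument, organized in three stages: strong consistency, quasi-optimality in the triple norm $|||\,\cdot\,|||$, and interpolation estimates.

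\emph{Stage 1 (consistency and Galerkin orthogonality).} First I would check that the stabilized bilinear form $\mathcal{M}$ is \emph{strongly consistent}: if $\mathfrak{u}=(\bu^+,p^+,\blambda^+,\bu^-,p^-,\blambda^-,\Phi)$ denotes the smooth solution of~\eqref{syscont}, then at $\mathfrak{u}$ one has $\bu^\pm=\Phi$ on $\Gamma$ and $\blambda^\pm=\sigma^\pm(\bu^\pm,p^\pm)\bn^\pm$ (the identities recorded just after the definition of $\mathscr{L}_0$), so every stabilization integral in $\mathcal{M}$ — the $\alpha_0$-terms $\int_\Gamma(\bu^\pm-\Phi)\cdot(\bv^\pm-\bvarphi)\,\d\Gamma$ and the $\gamma_0 h$-terms $\int_\Gamma(\sigma(\bu^\pm,p^\pm)\bn^\pm-\blambda^\pm)\cdot(\cdots)\,\d\Gamma$ — vanishes. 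Hence $\mathcal{M}(\mathfrak{u};\mathfrak{v}_h)=\mathcal{H}(\mathfrak{v}_h)$ for every $\mathfrak{v}_h\in\mathfrak{V}_h$, and since $\mathfrak{u}_h$ solves~\eqref{pbstabilized}, subtraction yields the orthogonality relation $\mathcal{M}(\mathfrak{u}-\mathfrak{u}_h;\mathfrak{v}_h)=0$ for all $\mathfrak{v}_h\in\mathfrak{V}_h$. One uses here that the regularity assumptions force $\blambda^\pm\in\LL^2(\Gamma)$, so the $\LL^2(\Gamma)$-simplification of the duality brackets is legitimate when evaluated at $\mathfrak{u}$.

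\emph{Stage 2 (quasi-optimality).} Next I would verify that $\mathcal{M}$ is continuous with respect to $|||\,\cdot\,|||$, uniformly in $h$, on the sum of the exact and discrete spaces: each term is matched by Cauchy--Schwarz with the corresponding contribution of $|||\,\cdot\,|||$ — for instance $\gamma_0 h\int_\Gamma\varepsilon(\bu^\pm)\bn\cdot\varepsilon(\bv^\pm)\bn\,\d\Gamma\le\gamma_0\,(\sqrt h\|\varepsilon(\bu^\pm)\bn\|_{\LL^2(\Gamma)})(\sqrt h\|\varepsilon(\bv^\pm)\bn\|_{\LL^2(\Gamma)})$, $\alpha_0\int_\Gamma(\bu^\pm-\Phi)\cdot(\bv^\pm-\bvarphi)\,\d\Gamma\le\alpha_0 h\,(\tfrac1{\sqrt h}\|\bu^\pm-\Phi\|_{\LL^2(\Gamma)})(\tfrac1{\sqrt h}\|\bv^\pm-\bvarphi\|_{\LL^2(\Gamma)})$, the coupling terms $\int_\Gamma\blambda^\pm\cdot(\bv^\pm-\bvarphi)\,\d\Gamma$ and $\int_\Gamma\bmu^\pm\cdot(\bu^\pm-\Phi)\,\d\Gamma$ controlled by products of $\sqrt h\|\cdot\|_{\LL^2(\Gamma)}$ and $\tfrac1{\sqrt h}\|\cdot\|_{\LL^2(\Gamma)}$ factors, and the bulk terms by the $\VV^\pm$ and $\Q^\pm$ norms. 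Combining this with the inf--sup estimate of Theorem~\ref{th-infsup} and the orthogonality of Stage~1 in the usual way — for any $\mathfrak{w}_h\in\mathfrak{V}_h$, $|||\,\mathfrak{u}_h-\mathfrak{w}_h\,|||\le C^{-1}\sup_{\mathfrak{v}_h\in\mathfrak{V}_h}\mathcal{M}(\mathfrak{u}_h-\mathfrak{w}_h;\mathfrak{v}_h)/|||\,\mathfrak{v}_h\,|||=C^{-1}\sup_{\mathfrak{v}_h\in\mathfrak{V}_h}\mathcal{M}(\mathfrak{u}-\mathfrak{w}_h;\mathfrak{v}_h)/|||\,\mathfrak{v}_h\,|||\le C^{-1}\|\mathcal{M}\|\,|||\,\mathfrak{u}-\mathfrak{w}_h\,|||$, followed by a triangle inequality and an infimum over $\mathfrak{w}_h$ — yields the quasi-optimal bound $|||\,\mathfrak{u}-\mathfrak{u}_h\,|||\le C\inf_{\mathfrak{w}_h\in\mathfrak{V}_h}|||\,\mathfrak{u}-\mathfrak{w}_h\,|||$.

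\emph{Stage 3 (interpolation).} It then remains to bound $\inf_{\mathfrak{w}_h}|||\,\mathfrak{u}-\mathfrak{w}_h\,|||$ by the right-hand side of the Corollary, after noting that $|||\,\cdot\,|||$ dominates each left-hand quantity: $\|\bu^\pm-\bu^\pm_h\|_{\VV^\pm}$ and $\|p^\pm-p^\pm_h\|_{\Q^\pm}$ directly, $\|\Phi-\Phi_h\|_{\ZZ}$ via the $\|\Phi\|_{\WW}^2$ term, and $h\|\blambda^\pm-\blambda^\pm_h\|_{\WW}\le h\|\blambda^\pm-\blambda^\pm_h\|_{\LL^2(\Gamma)}\le\sqrt h\,|||\,\mathfrak{u}-\mathfrak{u}_h\,|||$ via the $h\|\blambda^\pm\|_{\LL^2(\Gamma)}^2$ term. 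For the test element I would take $\mathfrak{w}_h$ built from quasi-interpolants: for $\bu^\pm$ and $p^\pm$ one first extends across $\Gamma$ (possible since $\Gamma$ is smooth and $\bu^\pm\in\HH^{k_\bu+1}$, $p^\pm\in\H^{k_p+1}$), interpolates on the uncut global mesh and restricts to $\Omega^\pm$, producing the classical estimates $\|\bu^\pm-\bu^\pm_h\|_{\VV^\pm}\le Ch^{k_\bu}\|\bu^\pm\|_{\HH^{k_\bu+1}(\Omega^\pm)}$ and $\|p^\pm-p^\pm_h\|_{\L^2(\Omega^\pm)}\le Ch^{k_p+1}\|p^\pm\|_{\H^{k_p+1}(\Omega^\pm)}$; for $\blambda^\pm$ and $\Phi$ one uses the $\WW_h$- resp. $\ZZ_h$-interpolants, so that $\|\Phi-\Phi_h\|_{\WW}\le Ch^{k_\Phi+1}\|\Phi\|_{\HH^{k_\Phi+1/2}(\Gamma)}$ and $\sqrt h\,\|\blambda^\pm-\blambda^\pm_h\|_{\LL^2(\Gamma)}\le Ch^{k_\lambda+1}\|\blambda^\pm\|_{\HH^{k_\lambda+1/2}(\Gamma)}$ (the $\sqrt h$ weight absorbing the $h^{k_\lambda+1/2}$ rate of $\LL^2(\Gamma)$-interpolation of $\HH^{k_\lambda+1/2}$ data). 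The remaining $\Gamma$-trace contributions of $|||\,\cdot\,|||$, which live on the cut elements, are then handled with the trace--inverse inequality $\|\varphi\|_{\LL^2(\Gamma\cap T)}^2\le C(h_T^{-1}\|\varphi\|_{\LL^2(T)}^2+h_T\|\nabla\varphi\|_{\LL^2(T)}^2)$ — valid in the same geometric regime as assumptions~$(\mathbf{A2})$--$(\mathbf{A3})$ — applied to the interpolation errors, turning $h\|\varepsilon(\bu^\pm-\bu^\pm_h)\bn\|_{\LL^2(\Gamma)}^2$ and $h\|p^\pm-p^\pm_h\|_{\LL^2(\Gamma)}^2$ into $O(h^{2k_\bu})$ and $O(h^{2(k_p+1)})$. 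Finally, for the terms $\tfrac1h\|(\bu^\pm-\bu^\pm_h)-(\Phi-\Phi_h)\|_{\LL^2(\Gamma)}^2$ one exploits the exact constraints $\bu^+=\bu^-=\Phi$ on $\Gamma$ to rewrite the argument purely in terms of interpolation errors (choosing the interpolants to be compatible across $\Gamma$) before invoking the same cut-element trace inequality, which again gives $O(h^{2k_\bu})$. I expect Stage~3 to be the main obstacle — specifically, securing the correct $h$-scaling of these last interface mismatch terms on arbitrarily cut simplices and coordinating the interpolation operators of $\bu^\pm$ and $\Phi$ across $\Gamma$; Stages~1 and~2 are the routine consistency/inf--sup machinery, carried out essentially as in~\cite{Renard2009, Court2014}.
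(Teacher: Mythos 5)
Your proposal is correct and follows essentially the same route as the paper: uniform continuity of $\mathcal{M}$ in the norm $|||\,\cdot\,|||$ plus the inf--sup condition of Theorem~\ref{th-infsup} give a C\'ea-type quasi-optimality estimate, which is then combined with the Sobolev extension theorem, standard nodal interpolation estimates, and the cut-element trace inequality (as in~\cite{Renard2009}). You merely make explicit the consistency/Galerkin-orthogonality step and the interface-term bookkeeping that the paper delegates to its reference.
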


\begin{proof}
On the space $\mathfrak{V}_h =  \VV_h^+  \times \Q_h^+  \times \WW_h  \times \VV_h^-  \times \Q_h^- \times \WW_h \times \ZZ_h$ endowed with the norm $||| \, \cdot \, |||$, the bilinear form $\mathcal{M}$ is uniformly continuous with respect to the mesh size $h$. Then Theorem~\ref{th-infsup} combined with a C\'ea-type lemma (see~\cite{Ern} for instance) yields the following error estimate:
\begin{eqnarray*}
||| \, \mathfrak{u}- \mathfrak{u}_h \, |||& \leq & C
\inf_{\mathfrak{v}_h \in \mathfrak{V}_h} ||| \, \mathfrak{u}- \mathfrak{v}_h \, |||.
\end{eqnarray*}
We now invoke the extension theorem for the Sobolev spaces, the standard
estimates for the nodal finite element interpolation operators, and the trace inequality
\begin{eqnarray*}
	\| \bvarphi \|_{\LL^2(\Gamma)} & \leq & 
	C \left(h\|\bvarphi\|_{\LL^2(T)} + \frac{1}{h}\|\bvarphi\|_{\LL^2(T)} \right)
\end{eqnarray*}
on any convex $T \in \mathcal{T}_h$, to complete the proof. We refer to ~\cite[Appendix~A,~page~1496]{Renard2009} for more details.
\end{proof}

\section{On the implementation of the method} \label{sec-impl}
This section is dedicated to remarks about the practical implementation of the method. We mention the libraries we use for the writing of the code, and we underline their advantages for the efficiency of the method.

\subsection{Libraries used for the implementation} \label{sec-libimpl}
Our finite element code is written under the Getfem++ Library (see \cite{Getfem}). \textcolor{black}{The method follows} the approach initially introduced for the Poisson problem in~\cite{Renard2009}, where functionalities of the library were defining the fictitious domain approach. It was next extended to the Stokes problem in~\cite{Court2014, Court2015} for standard Dirichlet conditions. In dimension 2 and 3, solving the global system can be made by using the solver MUMPS (see~\cite{MUMPS1, MUMPS2}), while using the linear algebra Gmm++ Library (installed inside the Getfem++ library).

For the consideration of boundary conditions where the boundary is independent of the mesh, the library Getfem++ enables us to solve several difficulties like the following:
\begin{itemize}
\item Defining basis functions of $\mathbf{W}_h$ from traces on $\Gamma$ of the standard basis functions of $\tilde{\WW}_h$. Note that the linear independence of these basis functions is not automatically satisfied {\it a priori}, and so redundant degrees of freedom may need to be eliminated {\it a posteriori} (with a range basis algorithm), if we do not want to deal with singular systems.
\item Detecting the interface $\Gamma$. For that, we can consider a level-set function, and we use functionalities of the library that compute objects related to this level-set function (values, gradient, unit normal vector...). If we choose analytically a general expression of a level-set function for localizing the interface, then a piecewise polynomial approximation is carried out in the implementation. We can specify the (polynomial) degree of this approximation.
\item During the assembly procedure, computing with accuracy the integrals over elements that are intersected by this interface. This is done with the call of the Qhull Library (see~\cite{Qhull}).
\end{itemize}

\subsection{Efficient update of matrices when the geometry evolves} \label{sec-smartupdate}
The most important interest of fictitious domain methods is to avoid to re-mesh when the \textcolor{black}{interface} has to be modified, and thus to spare computation time and resources. Indeed, re-meshing implies re-assembly of the whole system, and the computation of numerous integration terms of the matrices, during the assembly procedure of a complex simulation, is the most costly part in terms of computation time. For avoiding this, let us explain how to update a restricted number of terms between two different geometric configurations.

First we compute a stiffness matrix on the whole domain, independent of the \textcolor{black}{interface} $\Gamma$: Denoting by $\{ \tilde{\bvarphi}_i \}$ the basis functions of the discrete space $\tilde{\VV}_h$, we assemble the following matrix:
\begin{eqnarray*}
	\tilde{\mathbf{A}}_{ij} & = & \int_{\Omega} \varepsilon(\tilde{\bvarphi_i}): \varepsilon(\tilde{\bvarphi_j}) \, \d \Omega.
\end{eqnarray*}
This matrix is assembled once for all, and stored. Given an interface $\Gamma$ immersed into $\Omega$, the goal is now to construct efficiently the stiffness matrices used effectively for solving the system for the corresponding geometric configuration, namely the matrices
\begin{eqnarray*}
	\mathbf{A}^\pm_{ij} & = & \int_{\Omega^\pm} \varepsilon(\bvarphi^\pm_i): \varepsilon(\bvarphi^\pm_j) \, \d \Omega^\pm,
\end{eqnarray*}
where $\{ \bvarphi_i^\pm \}$ denote the basis functions of spaces $\VV^\pm_h$. The indexes of these basis functions, can be deduced from those of the standard ones $\{ \tilde{\bvarphi_i} \}$ by the use of reduction matrices $\mathbf{R}^\pm$. These matrices are sparse and binary, and so can be used inexpensively. Analogously we define the extension matrices $\mathbf{E}^\pm$, with which we associate the following properties:
\begin{eqnarray*}
\mathbf{E}^\pm = {\mathbf{R}^\pm}^T, & & \mathbf{R}^\pm\mathbf{E}^\pm = \mathbf{I}.
\end{eqnarray*}
From there, we can define the following partial stiffness matrices:
\begin{eqnarray*}
	\tilde{\mathbf{A}}^+  =  {\mathbf{R}}^+ \tilde{\mathbf{A}} {\mathbf{E}}^+, & & 
	\tilde{\mathbf{A}}^-  =  {\mathbf{R}}^- \tilde{\mathbf{A}} {\mathbf{E}}^-.
\end{eqnarray*}
However, these reduction matrices enable us only to select the indexes of the family $\{ \tilde{\bvarphi_i} \}$ that concern and localize the domains $\Omega^\pm$ (see Figure~\ref{fig-fictistyle}). The definition of functions $\{ \bvarphi_i \}$ from functions $\{ \tilde{\bvarphi}_i \}$ requires the identification of the triangles of the mesh that are cut by the $\Gamma$. The latter is localized with a level-set function, and the approximation of this level-set is made with the use of piecewise polynomial functions. The way the approximated level-set cuts the mesh defines subsimplices with corresponding Heaviside functions (see section~\ref{subsec-fict}). Thus we obtain the functions $\{\bvarphi_i\}$ by multiplying $\{ \tilde{\bvarphi_i} \}$ with the Heaviside functions on \textcolor{black}{these} subsimplices. The matrices $\mathbf{A}^\pm$ are then obtained with a local reassembly of the integration terms of the matrices $\tilde{\mathbf{A}}^\pm$ by including the Heaviside functions for the terms concerned by the subsimplices aforementioned. By these steps, we claim that we update a number of objects that is of the same range as the number of the mesh elements intersected by the \textcolor{black}{interface}. Of course, the same procedure can be transposed for other matrix blocks of the system.

\begin{algorithm}[htpb]
\begin{description}
\item[First Assembly:] Compute matrix $\tilde{\mathbf{A}}$ independent of the interface, once for all, and store it.

\item[Initialization: $k=0$.] For a given parameterized geometry, initialize
\begin{description}
\item[1:] the level-set expression \texttt{ls-value}, and next the level-set object \texttt{ls},
\item[2:] the partial integration methods \texttt{mim} and the partial finite element methods \texttt{mf}.
\item[3:] From \texttt{mim}, identify the indices of \texttt{mf} concerned by the interface.
\item[4:] From $\tilde{\mathbf{A}}$, define $\tilde{\mathbf{A}}^\pm$, and next $\mathbf{A}$ by reassembling the terms concerned by the interface.
\item[5:] First solve, use of MUMPS.
\end{description}

\item[Iteration $k\geq 0$.] From the first solve, define a new geometric configuration, and update
\begin{description}
\item[1:] the level-set expression, and next the level-set object: \texttt{ls.adapt();}
\item[2:] the partial integration methods and the partial finite element methods: \texttt{mim.adapt();} \texttt{mf.adapt();}
\item[3:] From the former \texttt{mim} and the new \texttt{mim}, identify the indices of the new \texttt{mf} concerned by the change of interface.
\item[4:] From $\tilde{\mathbf{A}}$, define $\tilde{\mathbf{A}}^\pm$, and next $\mathbf{A}$ by reassembling the terms concerned by the change of interface.
\item[5:] Solve, use of MUMPS.
\end{description}
\end{description}
\caption{Efficient update algorithm}\label{UCG}
\end{algorithm}
\FloatBarrier

\textcolor{black}{Note that in particular cases where storing an LU decomposition -- of matrix $\tilde{\mathbf{A}}$ is possible, then the same reduced update procedure could be performed for updating this decomposition without redoing it entirely. Note that the stabilization matrices have to be reassembled anyway, but the corresponding time computation is negligible compared with the time needed for the assembly of $\tilde{\mathbf{A}}$.}

\section{Numerical tests} \label{sec-numtests}
In order to illustrate the theoretical analysis, and in particular to underline the result of Corollary~\ref{supercoro} which guarantees the optimal convergence rates, we propose numerical tests. The convergence tests will be performed for the square domain $\Omega = [0,1]^2$ ($d=2$), with interfaces $\Gamma$ represented by level-set of type 
\begin{eqnarray*}
	\begin{array} {lcl}
	\mathrm{\ell s}(x,y) = ( x-x_c)^2 + ( y-y_c)^2 - R^2 & & \text{if } d=2, 
	\end{array}
\end{eqnarray*}
where $(x_c,y_c)$ denotes the coordinates of the center of a \textcolor{black}{circle}, and where $R>0$ denotes its radius. We will use the following exact solutions:
\begin{eqnarray*}
	& & \begin{array} {lcl}
	\bu_{ex}(x,y) = \left( \begin{array} {c} 
	\cos(\pi x)\sin(\pi y) \\ 
	-\sin(\pi x)\cos(\pi y)
	\end{array} \right) 
	& & \\
	 \text{\textcolor{black}{$p_{ex}(x,y)$}} = c_p\left((y-y_c)\cos(2\pi x)+(x-x_c)\sin(2\pi y)\right) & & 
	 \text{with $c_p= 1$ or $3$,} \\
	& & \text{whether \textcolor{black}{$\ell s(x,y) < 0$} or $>0$ respectively.}
	\end{array}
\end{eqnarray*}
Note that these solutions satisfy $\divg \bu_{ex} = 0$, and when the interface is a sphere of center $(x_c,y_c)$, the pressures satisfy automatically $\displaystyle \int_{\Omega^{\pm}} p_{ex}^{\pm}\, \d \Omega^{\pm}  =  0$. The following data of system~\eqref{mainsys} are thus considered in the numerical tests:
\begin{eqnarray*}
\bff^\pm  =  -\nu^{\pm}\Delta_{ex}\bu_{ex} + \nabla p_{ex}, & & 
\bgg  =  2(\nu^+-\nu^-)\varepsilon(\bu_{ex})\bn - (p^+_{ex}-p^-_{ex})\bn.
\end{eqnarray*}
We choose $(x_c,y_c) = (0.5,0.5)$, $R = 0.23$, $\nu^+ = 2.0$ and $\nu^- = 1.0$.

\subsection{Convergence rates without stabilization} \label{subsec-cvstab0}
For the different variables, we compute the relative errors with respect to the exact solutions given previously. Results are given in Figure~\ref{fig-cv1}, Figure~\ref{fig-cv2} and Figure~\ref{fig-cv3}, where the rates of convergence (obtained for different mesh sizes) are computed by linear regression. The notation P2/P1/P0, for instance, indicates that P2 elements are chosen are chosen for the velocities $\bu^\pm$, P1 elements are chosen for the pressures $p^\pm$, and P0 elements are chosen for the multipliers $\Phi$ and $\blambda^\pm$. The quantities represented in Figure~\ref{fig-cv1} and Figure~\ref{fig-cv2} are relative errors computed on the global variables $\bu_h$ and $p_h$ defined as follows:
\begin{eqnarray*}
	\bu_h(\bx) = \left\{ \begin{array} {ll}
		\bu_h^+(\bx) & \text{if } \bx \in \Omega^+, \\
		\bu_h^-(\bx) & \text{if } \bx \in \Omega^-,
	\end{array} \right.
	& & 
	p_h(\bx) = \left\{ \begin{array} {ll}
		p_h^+(\bx) & \text{if } \bx \in \Omega^+, \\
		p_h^-(\bx) & \text{if } \bx \in \Omega^-.
	\end{array} \right.
\end{eqnarray*}
The quantities represented in Figure~\ref{fig-cv3} correspond to a mean value between the errors committed on $\blambda^+$ and $\blambda^-$, namely the square root of the quantity given below:
\begin{eqnarray}
\frac{
\| \blambda_h^+ - \sigma^+(\bu_{ex}^+,p_{ex}^+)\bn^+ \|^2
+ \| \blambda_h^- - \sigma^-(\bu_{ex}^-,p_{ex}^-)\bn^- \|^2
}
{
\| \sigma^\pm(\bu_{ex}^+,p_{ex}^+)\bn^+ \|^2
+ \|  \sigma^\pm(\bu_{ex}^-,p_{ex}^-)\bn^- \|^2
}. \label{formula-error-lambda}
\end{eqnarray}

\begin{minipage}{\linewidth}
\begin{center}
\hspace*{-20pt}\begin{tabular} {r|l}
\includegraphics[trim = 0cm 0cm 1.0cm 0cm, clip, scale=0.35]{./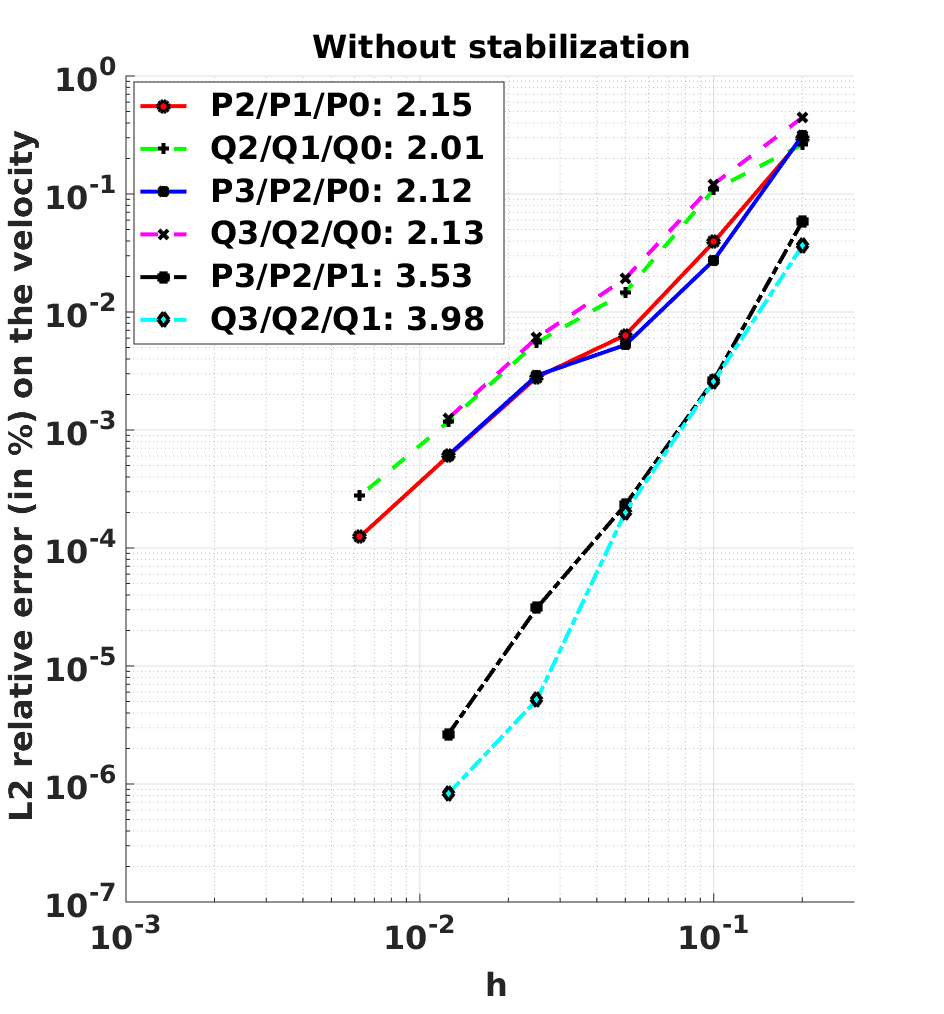}
&
\includegraphics[trim = 0cm 0cm 1.0cm 0cm, clip, scale=0.35]{./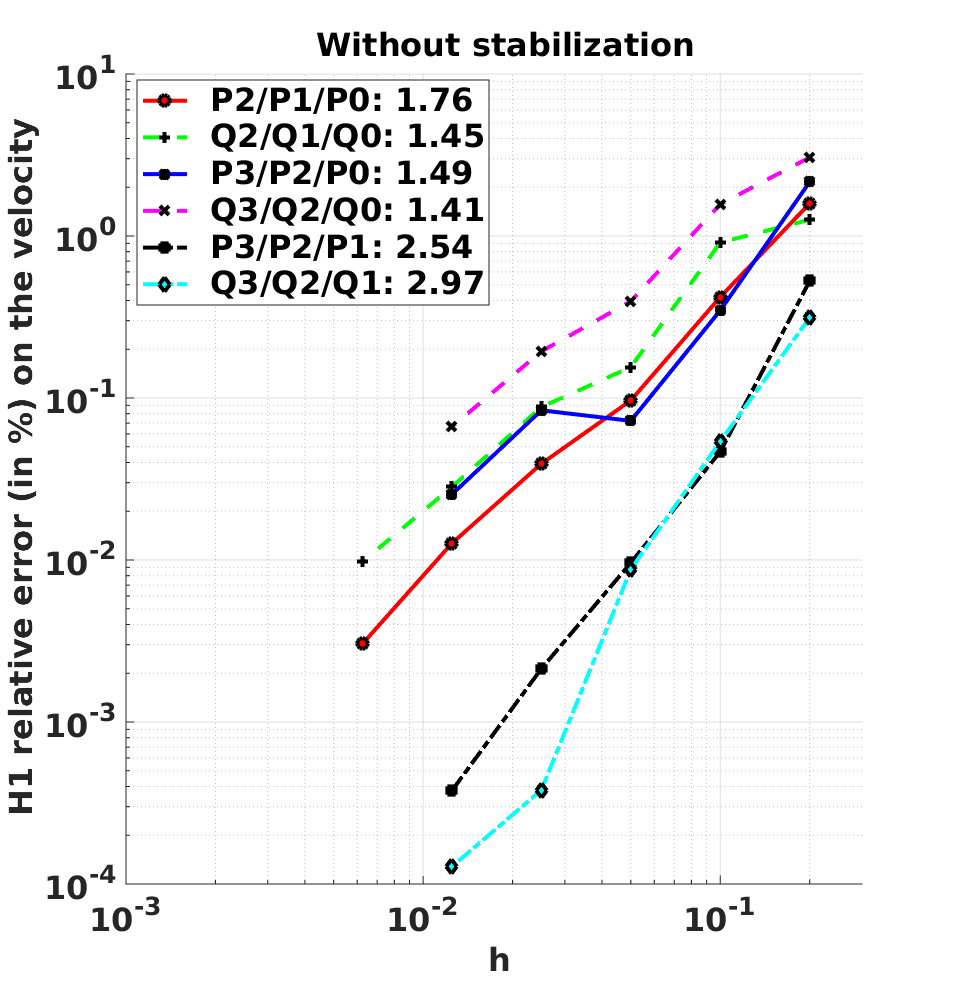}
\end{tabular}
\begin{figure}[H]
	\vspace*{-15pt}
	\caption{$\LL^2$ and $\HH^1$-relative errors (in \%) on the velocity $\bu$ in function of the mesh size, and estimation of convergence rates with the slopes of the curves obtained by linear regression.}
	\label{fig-cv1}
\end{figure}
\end{center}
\end{minipage}

\begin{minipage}{\linewidth}
\begin{center}
\hspace*{-20pt}\begin{tabular} {c}
\includegraphics[trim = 0cm 0cm 1.0cm 0cm, clip, scale=0.35]{./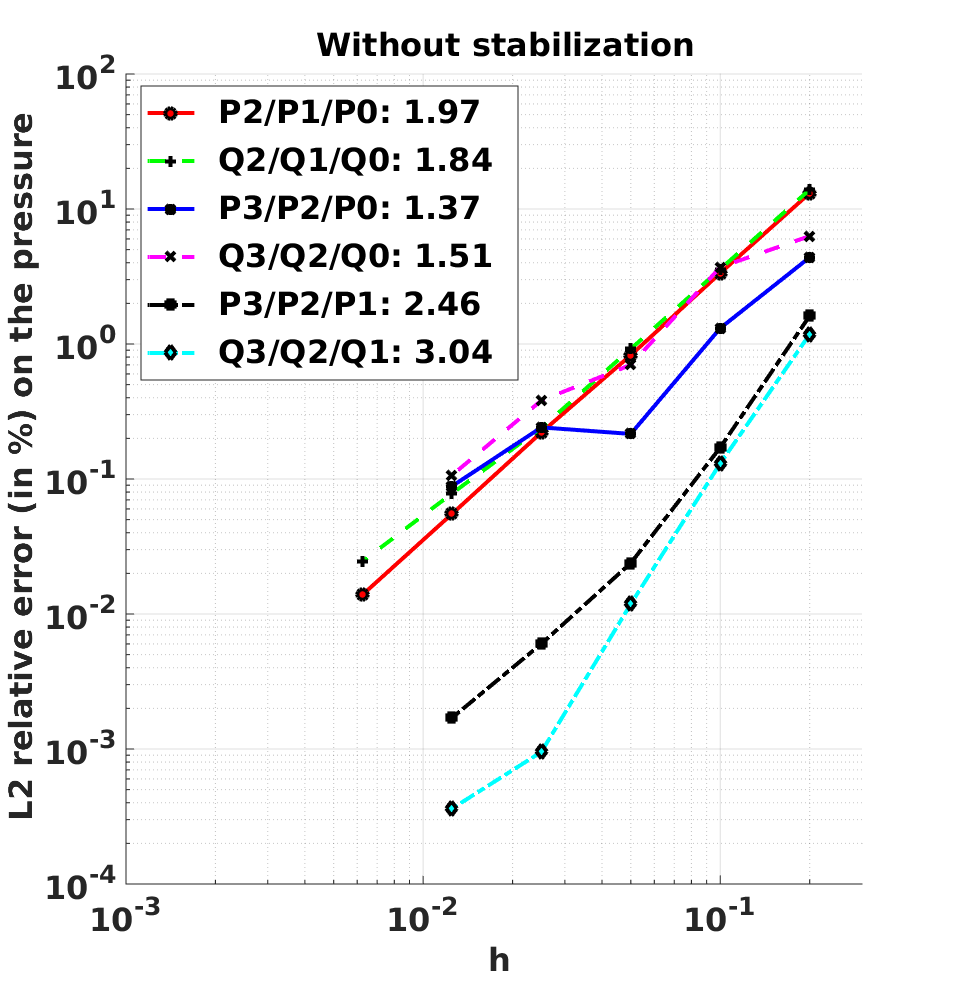}
\end{tabular}
\begin{figure}[H]
	\vspace*{-15pt}
	\caption{$\LL^2$-relative errors (in \%) on the pressure $p$ in function of the mesh size, and estimation of convergence rates with the slopes of the curves obtained by linear regression.}
	\label{fig-cv2}
\end{figure}
\end{center}
\end{minipage}
\FloatBarrier
\begin{minipage}{\linewidth}
\begin{center}
\hspace*{-20pt}\begin{tabular} {r|l}
\includegraphics[trim = 0cm 0cm 1.0cm 0cm, clip, scale=0.35]{./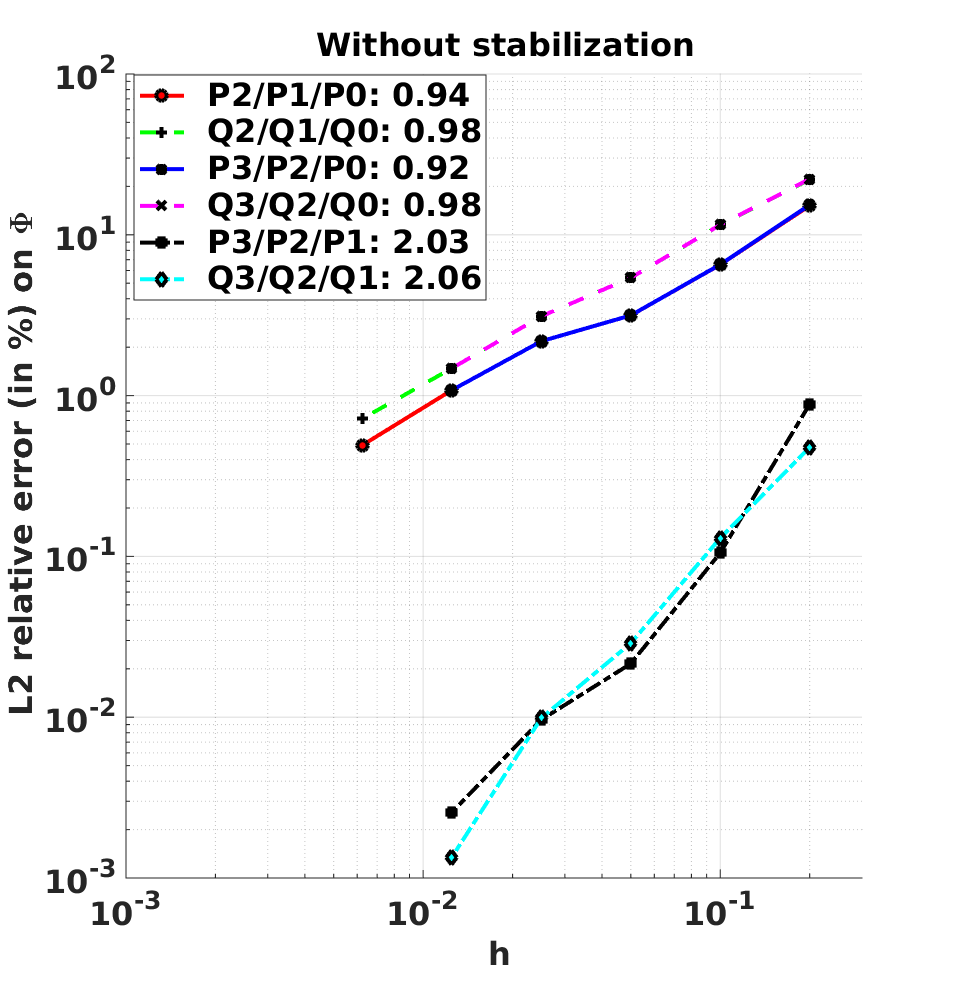}
&
\includegraphics[trim = 0cm 0cm 1.0cm 0cm, clip, scale=0.35]{./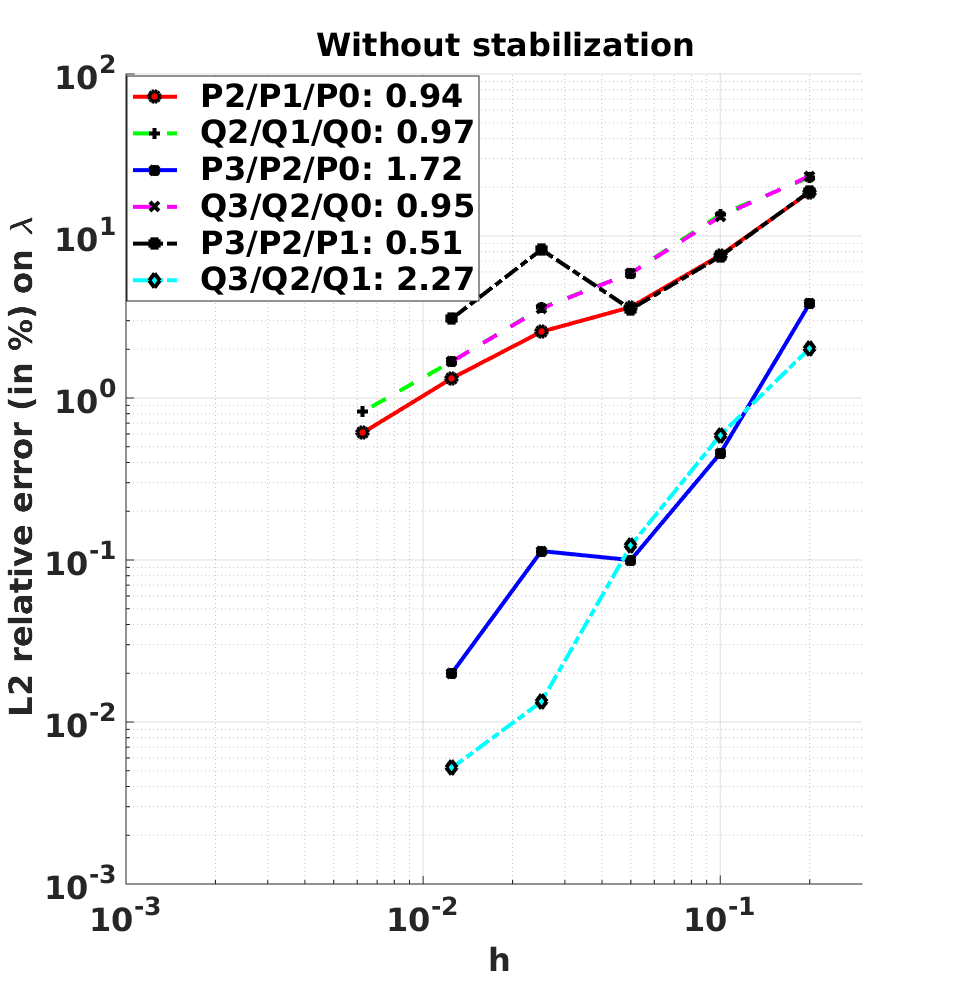}
\end{tabular}
\begin{figure}[H]
	\vspace*{-15pt}
	\caption{$\LL^2(\Gamma)$-relative errors (in \%) on the multipliers $\Phi$ and $\blambda^\pm$ in function of the mesh size, and estimation of convergence rates with the slopes of the curves obtained by linear regression.}
	\label{fig-cv3}
\end{figure}
\end{center}
\end{minipage}
\FloatBarrier
\hfill \\ \hfill \\
In Figure~\ref{fig-cv1}, we observe that the optimal convergence rates for the velocity seem to be reached for the Q3/Q2/Q1 triplet of elements. The rates for the triplet P3/P2/P1 are slightly degraded, while a limitation seem to occur when we choose P0 or Q0 elements for the multipliers. Indeed, in those cases the computed rate for the $\LL^2$-norm of the velocity is around $2$, and the one for the $\HH^1$-norm is about $1.5$. However, the limitation on the quality of convergence for the $\HH^1$-norm is not as bad as the one announced by Proposition~\ref{prop-limit}. In Figure~\ref{fig-cv2}, we can make the same observations on the quality of convergence for the pressure. The optimal rates is here again achieved when we choose the Q3/Q2/Q1 triplet, but also when we choose the P2/P1/P0 triplet. The effective rate of convergence for the pressure is significantly degraded for the P3/P2/P0 and Q3/Q2/Q0 triplets. In Figure~\ref{fig-cv3}, we see that the optimal rate of convergence for the variable $\Phi$ seems to be achieved in all cases, while for the variables $\blambda^\pm$ the accuracy appears to be particularly bad when we choose the P3/P2/P1 triplet.

\subsection{With stabilization} \label{subsec-cvstab1}
In this subsection we study the influence of the stabilization technique on the accuracy of the method. First, in all the tests we performed, we do not see any significant influence of the stabilization technique for the variable $\Phi$: For all the values we considered for the parameter $\alpha$, for all kind of mesh size and all kind of geometric configuration, the difference observed with and without stabilization is negligible. With $\alpha_0 = 0$, we already observe in Figure~\ref{fig-cv3} the optimal rates of convergence for the variable $\Phi$. This may be due to a gap between the theoretical analysis and the numerical realization. The lack of theoretical convergence announced in section~\ref{subsec-theor} seems to be too pessimistic, and maybe it is true that without the stabilization terms that concern $\Phi$, the optimal theoretical convergence for $\Phi$ is automatically guaranteed.

Thus the rest of numerical tests will be performed with $\alpha_0 = 0$, and we will focus our interest on the parameter $\gamma$ for the stabilization of the variables $\blambda^\pm$. Without stabilization for the variables $\blambda^\pm$, the accuracy showed in Figure~\ref{fig-cv3} is already satisfying, {\it a priori}, but we are going to see that in some situations the stabilization technique is crucial. Getting a good approximation of these quantities can be of interest for fluid-structure models, with the consideration of other \textcolor{black}{interface} conditions, for control problems where their expressions can appear in adjoint systems, or simply for physical reasons.

\paragraph{Choice of the stabilization parameter.}
Remind that the stabilization parameter is chosen to be proportional to the mesh size, as $\gamma = \gamma_0h$. Let us choose the parameter $\gamma_0$ judiciously. Indeed, a too large stabilization parameter would degrade the coerciveness of the system. For this task, for different values of $\gamma_0 >0$, we compute the $\LL^2(\Gamma)$ relative errors on the multipliers $\blambda^\pm$ (as explained in section~\ref{subsec-cvstab0}, formula~\eqref{formula-error-lambda}) for the P2/P1/P0 triplet of elements, with the mesh size $h=0.1$, with the geometric configuration and the exact solutions described at the beginning of section~\ref{sec-numtests}. The results are presented in Figure~\ref{fig-choice}.

\begin{center}
	\hspace*{-10pt}\includegraphics[trim = 0cm 0cm 0cm 0cm, clip, scale=0.35]{./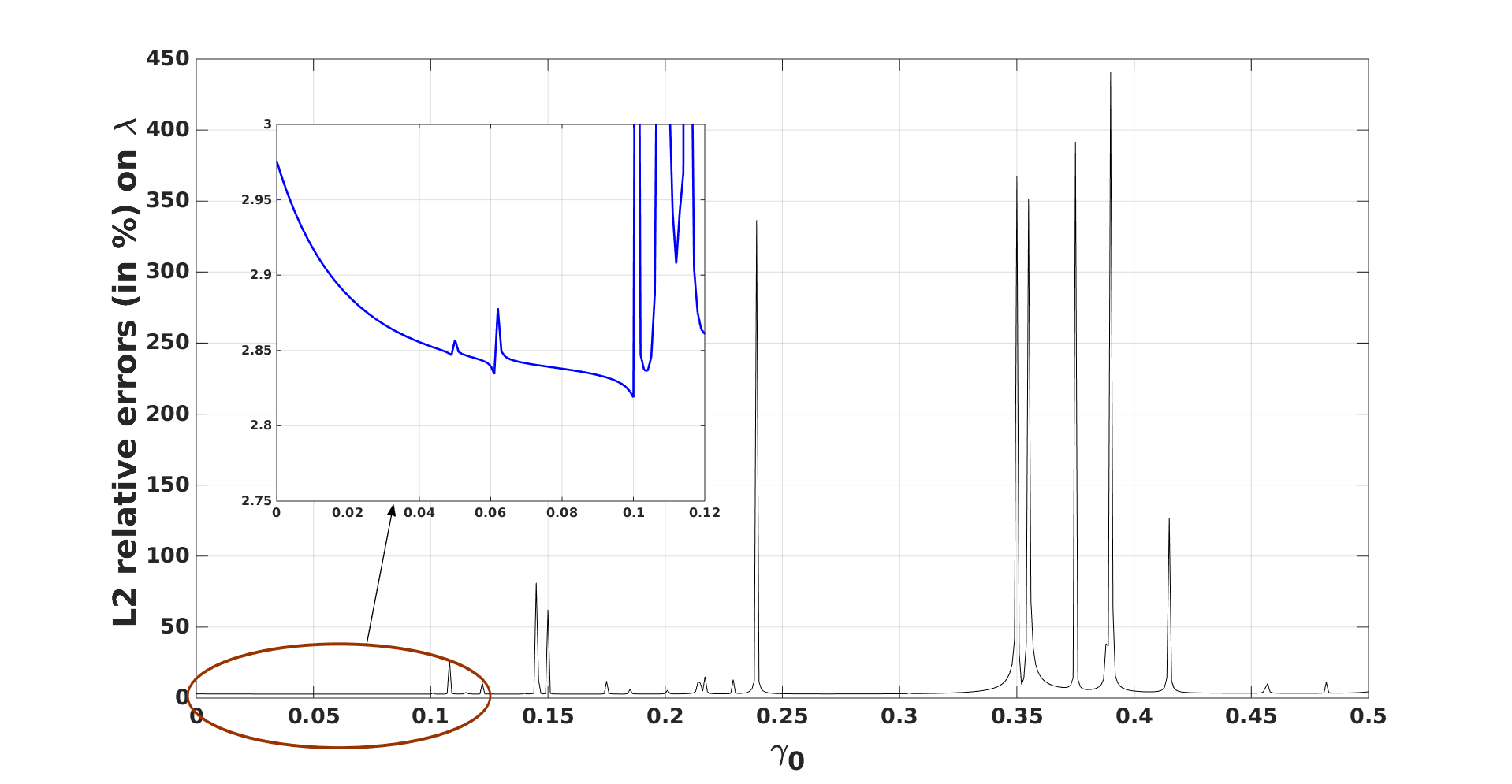}
	\vspace*{-30pt}
	\begin{figure}[H]
		\caption{$\LL^2(\Gamma)$ relative errors on the multipliers $\blambda^\pm$, for different values of the stabilization parameter $\gamma_0$.}
		\label{fig-choice}
	\end{figure}
\end{center}
\FloatBarrier

We observe in Figure~\ref{fig-choice} that serious instabilities appear for values of $\gamma_0$ larger than $0.1$. Smaller instabilities actually occur for smaller values, leading us to choose $\gamma_0$ around $0.02$, even if for this range of values the influence of the stabilization technique on the accuracy is {\it a priori} negligible (see the comments of Figure~\ref{fig-robust} for further comments). For this kind of size of computational domains, and for this range of values for the viscosities, we will choose in the rest of the paper values \textcolor{black}{of $\gamma_0$ smaller than $0.02$}. \textcolor{black}{Note that the range of acceptable values for $\gamma_0$ does not depend on the mesh size $h$, as predicted by the theory. Performing the same tests for finer meshes leads to the same range of values.} \\

In practice, for this value of $\gamma_0$, the differences between the errors computed with and without the stabilization technique are not significant, for all the variables, and so we do not show the convergence curves obtained with the stabilization technique, since qualitatively the observations are the same. Actually, we show in the next paragraph that the main interest of the stabilization technique lies in its capacities of considering various geometric configurations.

\paragraph{Robustness with respect to the geometry.} \label{subsec-cvrobust}
Let us study the behavior of the stabilization technique for different geometric configurations. The goal is to anticipate an unsteady framework for which the level-set would have to cut the mesh randomly, and to demonstrate that this stabilization technique provides a qualitatively constant behavior in terms of accuracy. For that purpose, we propose to \textcolor{black}{compute and compare the relative errors on $\blambda^\pm$ (like in the previous paragraph), with and without stabilization}, and for different values of the abscissa $x_c$ of the center of the circle. The results are presented in Figure~\ref{fig-robust}.

\begin{center}
	\hspace*{-10pt}\includegraphics[trim = 0cm 0cm 0cm 0cm, clip, scale=0.35]{./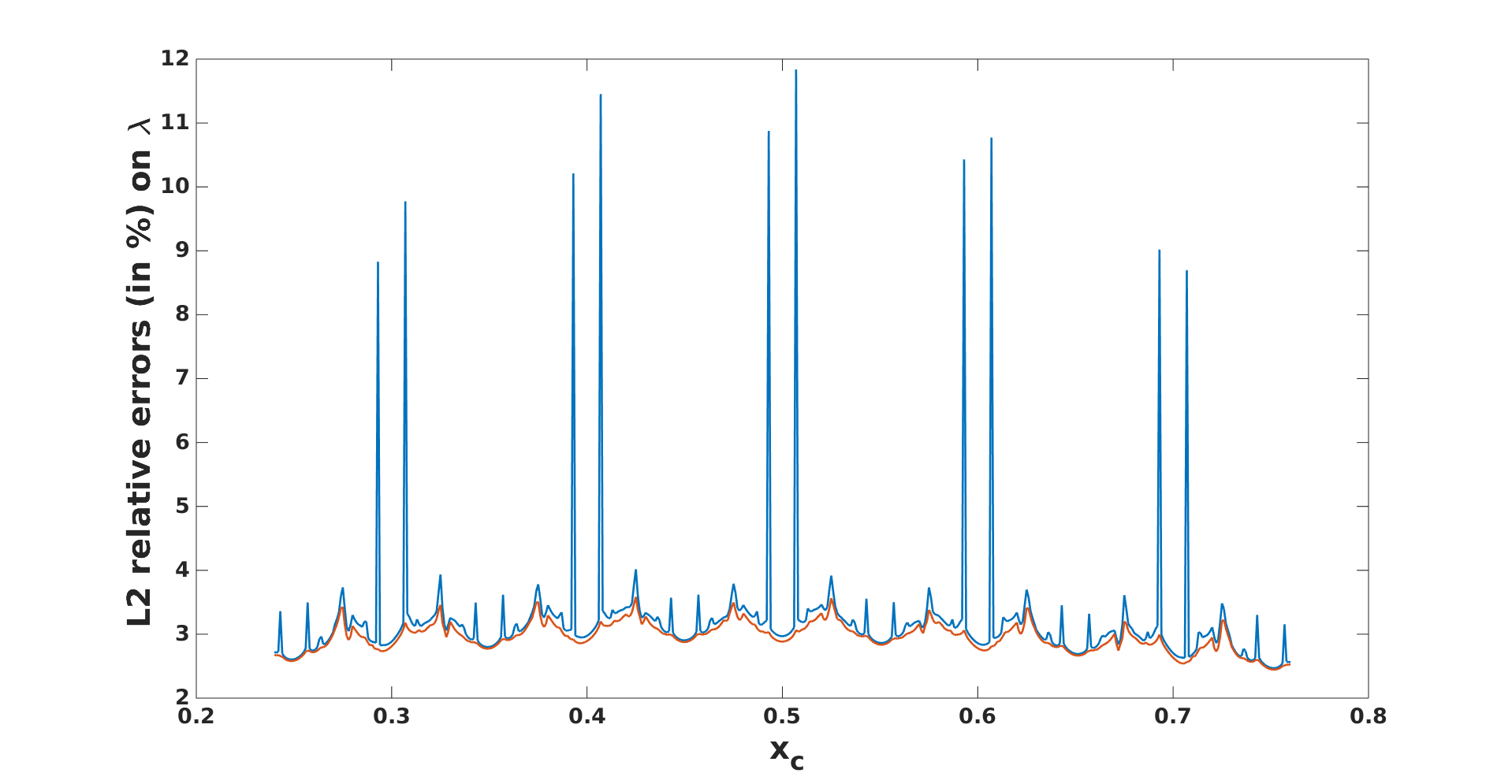}
	\vspace*{-30pt}
	\begin{figure}[H]
		\caption{$\LL^2(\Gamma)$ relative errors on the multipliers $\blambda^\pm$, for different geometric configurations, without stabilization (in blue) and with stabilization (in red) with $\gamma_0 = 0.02$.}
		\label{fig-robust}
	\end{figure}
\end{center}
\FloatBarrier

In Figure~\ref{fig-robust}, we observe that the stabilization technique enables us to prevent the instabilities that occur for some values of $x_c$ without stabilization. \textcolor{black}{Actually these instabilities are also fixed for very small values of $\gamma_0$ (like $\gamma_0 = 5.e^{-4})$.} Let us mention that for larger values of $\gamma_0$ (like $\gamma_0 = 0.03$), other instabilities appear for some values of $x_c$. Besides, the errors obtained with stabilization are -- almost -- always better than those computed without stabilization. This improvement is in general not really significant, the main interest of the stabilization technique lies of course in its robustness with respect of the geometry.

\section{An unsteady case: Deformation of an ellipsoid coupled with surface tension} \label{sec-unsteady}

This section is devoted to testing the capacities of the method in an unsteady \textcolor{black}{simplified} framework. \textcolor{black}{The aim is to illustrate that the method enables us to solve a problem for which the time evolution of an interface is coupled with its own geometry.} The \textcolor{black}{interface} $\Gamma$ will depend on time, and from now we denote it by $\Gamma(t)$. It splits the domain $\Omega$ into two parts that we denote by $\Omega^+(t)$ and $\Omega^-(t)$, playing the role of $\Omega^+$ and $\Omega^-$ respectively, as previously. The unknowns are now $(\bu^{\pm},p^{\pm})$ and the deformation of $\Gamma(t)$, that we denote by $X$. While the velocity and the pressure are described in Eulerian coordinates, the description of the deformation $X$ is Lagrangian:
\begin{eqnarray*}
	\Gamma(t) = X(\Gamma(0),t).
\end{eqnarray*}
We consider a system which couples the different unknowns mentioned above, and for which a good approximation of the variable $\Phi = \bu^{\pm}_{| \Gamma(t)}$ is essential.

\subsection{\textcolor{black}{A test at low Reynolds number}} \label{subsec-simustokes}
We consider the following system, for all $t \in [0,T]$:
\begin{eqnarray}\label{syscoupled}
\left\{ \begin{array} {rcll}
-\divg \sigma(\bu,p) & = & 0 & \text{in } \Omega^{\pm}(t), \\
\divg \bu & = & 0 &  \text{in } \Omega^{\pm}(t), \\
\bu(\cdot,t) & = & \displaystyle\frac{\p X}{\p t}(X^{-1}(\cdot,t),t) & \text{on } \Gamma(t), 
\label{sysc3}\\[5pt]
\left[ \sigma(\bu,p) \right]\bn & = & -\mu \kappa \bn, & \text{across } \Gamma(t). \label{sysc4}
\end{array} \right.
\end{eqnarray}
The parameter $\mu >0$ is the surface tension parameter, assumed to be constant. The scalar function $\kappa$ denotes the mean curvature of the surface $\Gamma(t)$, related to $X$ through the formula 
\begin{eqnarray*}
\left(\Delta_{\Gamma_0} X\right)\circ X^{-1} & = & \kappa \bn.
\end{eqnarray*}
The notation $\Delta_{\Gamma_0}$ refers to the Laplace-Beltrami operator on the manifold $\Gamma_0 := \Gamma(0)$. The evolution of $\Gamma(t)$ is ruled by the following coupling: \textcolor{black}{the} geometry of $\Gamma(t)$ imposes the jump condition in the fourth equation of~\eqref{sysc4}. The response of the surrounding fluid is the trace of the velocity $\bu^{\pm}$ on $\Gamma(t)$. It determines in the third equation of~\eqref{sysc3} the time-derivative of the deformation $X$, and thus the evolution of $\Gamma(t)$.

We restrain the set of deformations $X$ to the case of an ellipsoid centered at the point of coordinates $x_{c,i} = 0.5$, for $i = 1\dots d$. \textcolor{black}{We denote $\mathrm{y} = (y_i)_{i=1\dots d}$ the space variable in the reference configuration, and $\mathrm{x} = (x_i)_{i=1\dots d}$ the one in the deformed configuration}. The deformation is parameterized by its semi-axes $(a_i)_{i=1\dots d}$, and its expression is explicit, given by
\begin{eqnarray*}
X(\mathrm{y},t) & = & \left(x_{c,i} + (y_i-x_{c,i})\frac{a_i(t)}{a_i(0)} \right)_{i=1\dots d}.
\end{eqnarray*}
We calculate easily
\begin{eqnarray}
\frac{\p X}{\p t}(X^{-1}(\mathrm{x},t),t) & = & 
\left((x_i - x_{c,i})\frac{a_i'(t)}{a_i(t)} \right)_{i=1\dots d}. \label{eqvelodisc}
\end{eqnarray}

The \textcolor{black}{interface} is implemented with a level-set function, whose expression is given by
\begin{eqnarray*}
\ell s(\mathrm{x}) & = & -1 + \sum_{i=1}^d \left( \frac{x_i-x_{c,i}}{a_i} \right)^2.
\end{eqnarray*}
\textcolor{black}{In practice, this function is approximated with piecewise polynomial functions of degree 2 (as mentioned in section~\ref{sec-libimpl}), and thus this approximation is exact in that case.} Remind that the outward unit normal vector is given by $\bn = \nabla \ell s/ |\nabla \ell s|_{\R^d}$, and the curvature of the ellipse obeys the formula $\kappa = -\divg \bn$, leading to the \textcolor{black}{formula $\kappa(\mathrm{x})  =  \frac{-1}{a_1^2a_2^2}\left(\frac{x_1^2}{a_1^4} + \frac{x_2^2}{a_2^4} \right)^{-3/2}$.}

\paragraph{Numerical scheme for the time evolution.}
Denoting by $\Phi(t)$ the multiplier taking into account the jump condition on $\Gamma(t)$ (last equation of~\eqref{sysc4}), we know that its value is the trace on $\Gamma(t)$ of the velocity fields $\bu^{\pm}(\cdot,t)$. Then the third equation of~\eqref{sysc3} becomes $\Phi(t) = \displaystyle \frac{\p X}{\p t}(X^{-1}(\cdot,t),t)$. Combining this equality with~\eqref{eqvelodisc}, we deduce componentwise, for $i=1\dots d$, the equality
\begin{eqnarray*}
\Phi_i(t) & = & (x_i - x_{c,i})\frac{a_i'(t)}{a_i(t)},
\end{eqnarray*}
where $\Phi_i$ denotes the $i$-th component of the vector field $\Phi$. This equality has to be considered in the variational sense. Taking the scalar product of it with the scalar functions $\Phi_i(t)$, for $i=1\dots d$, we deduce
\begin{eqnarray*}
	a_i'(t) \langle x_i-x_{c,i}; \Phi_i\rangle_{\L^2(\Gamma(t))}  & = & 
	a_i(t) \| \Phi_i\|_{\L^2(\Gamma(t))}.
\end{eqnarray*}
From a time-stepping $(t_n)_{n=0\dots N}$ with a constantA test at low Reynolds number time-step $\Delta t$, we discretize this differential equation semi-implicitly, as follows:
\begin{eqnarray*}
	(a_i^{(n+1)} - a_i^n) \langle x_i-x_{c,i}; \Phi_i(t_n)\rangle_{\L^2(\Gamma(t_n))}  & = & 
	(\Delta t )a_i^{(n+1)} \| \Phi_i(t_n)\|_{\L^2(\Gamma(t_n))}.
\end{eqnarray*}
This yields the following scheme:
\begin{eqnarray*}
	a_i^{(n+1)} & = & 
	\left(1 - \Delta t \frac{\|\Phi_i(t_n)|_{\L^2(\Gamma(t_n))}^2}{\langle x_i - x_{c,i};\Phi_i\rangle_{\L^2(\Gamma(t_n))}}\right)^{-1} a_i^{(n)}.
\end{eqnarray*}
We can write $\Phi(t_n) = \mathcal{K}_n(-\mu \kappa(t_n)\bn(t_n))$, where $\mathcal{K}_n$ denotes the Poincar\'e-Steklov operator defined by the solution of system~\eqref{sysjump}, with $\Omega = \Omega(t_n)$ and $\bgg=-\mu \kappa(t_n) \bn(t_n)$. The term $\kappa (t_n)\bn(t_n)$ is entirely given by the geometry of $\Gamma(t_n)$, namely the parameters $(a_i^{(n)})_{i=1\dots d}$. Thus we see the explicit scheme given by
\begin{eqnarray} \label{scheme-Stokes}
	a_i^{(n+1)} & = & 
	\left(1 - \Delta t \frac{\|\mathcal{K}_n(-\mu \kappa(t_n)\bn(t_n))i|_{\L^2(\Gamma(t_n))}^2}{\langle x_i - x_{c,i};\Phi_i\rangle_{\L^2(\Gamma(t_n))}}\right)^{-1} a_i^{(n)}.
\end{eqnarray}
A simulation is performed with \textcolor{black}{Q3/Q2/Q1} elements, a triangular Cartesian mesh with $40$ subdivisions in each space direction. The other parameters are listed in Table~\ref{table1}. 

\begin{remark}
We use the whole stabilization technique (with parameters $\alpha_0$ and $\gamma_0$), even if we were not able to show that it could influence the accuracy for the variable $\Phi$. The goal is to prevent bad potential situations that we were not able to detect in section~\ref{subsec-cvrobust} (like those which occurred for the variables $\blambda^\pm$ in Figure~\ref{fig-robust}), namely situations where the accuracy on the other variables (like $\Phi$) would be also degraded for some geometric configurations. Indeed, theoretically there is {\it a priori} no reason that the convergence is guaranteed for the variable $\Phi$ without stabilization. 
\end{remark}

\begin{table}
\begin{center}
\begin{eqnarray*}
& \begin{array} {|c|c|c|c|c|c|c|c|c|}
\hline
 \nu^+ & \nu^- & \mu & a_1^{(0)} & a_2^{(0)} & T & \Delta t & \alpha_0 & \gamma_0\\
\hline 
 0.1 & 0.05 & 50 & 0.3537 & 0.2037 & 0.1 & 0.00025 & 0.01 & 0.01\\
\hline 
\end{array} &
\end{eqnarray*}
\vspace*{-10pt}
	\caption{Simulation parameters, for the Stokes model.}
	\label{table1}
\end{center}
\end{table}
\FloatBarrier

\begin{center}
	\hspace*{-10pt}\includegraphics[trim = 0cm 0.5cm 0cm 1.5cm, clip, scale=0.35]{./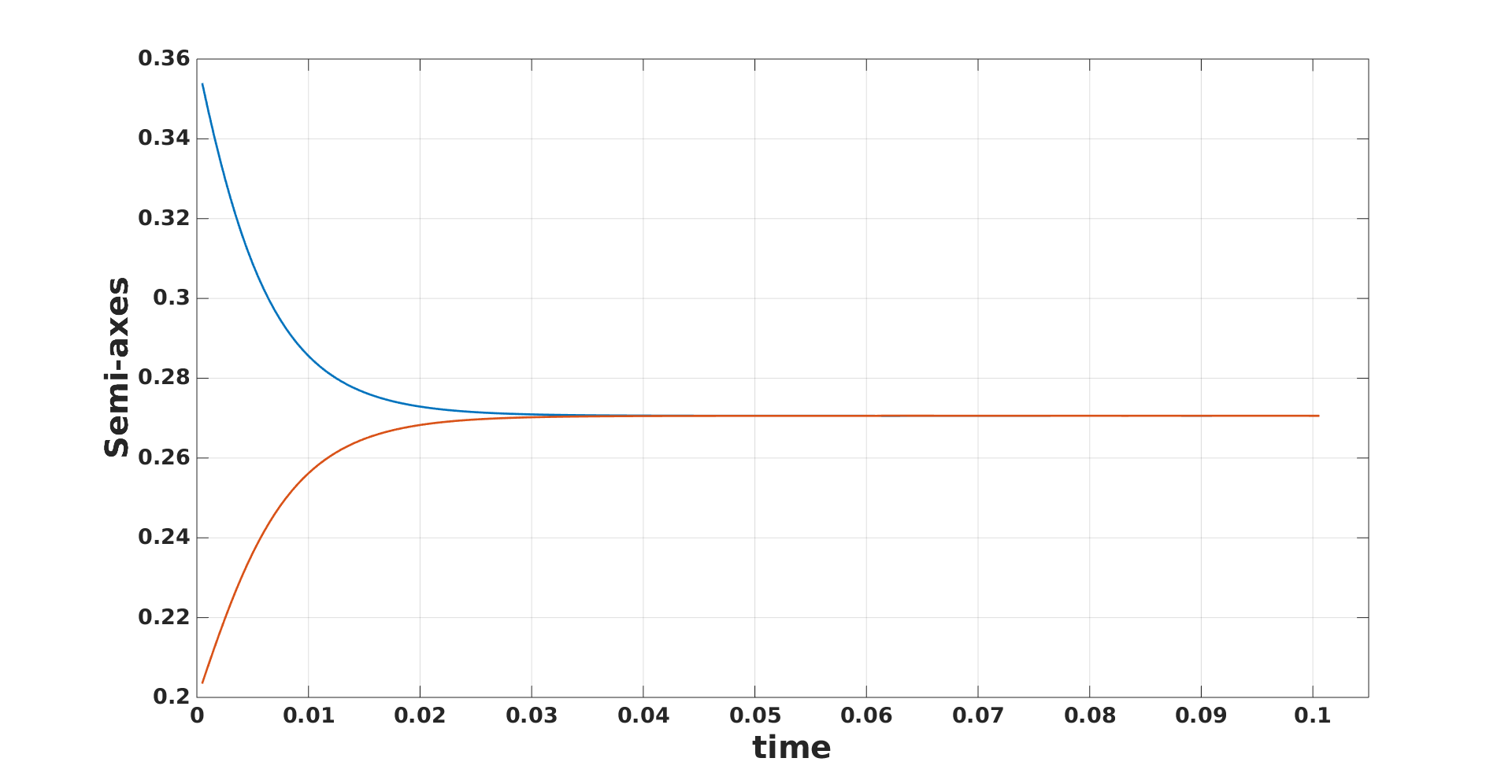}
	\vspace*{-30pt}
	\begin{figure}[H]
		\caption{Time evolution of the semi-axes of the ellipse in dimension 2, for the Stokes model.}
		\label{fig-ellipse-Stokes}
	\end{figure}
\end{center}
\FloatBarrier
Figure~\ref{fig-ellipse-Stokes} shows that the ellipse converges to the circle. Indeed, its semi-axes tend monotonously to a constant value. The potential energy of the interface, namely $\mu |\Gamma(t)|$, is dissipated by the viscosity forces as follows:
\begin{eqnarray*}
	\frac{\d}{\d t}\left( \mu\left|\Gamma(t)\right| \right) + 2\nu^+\|\varepsilon(\bu^+(\cdot,t))\|^2_{\left[\L^2(\Omega^+(t))\right]^{d\times d}} 
	+ 2\nu^-\|\varepsilon(\bu^-(\cdot,t)) \|^2_{\left[\L^2(\Omega^-(t))\right]^{d\times}} & = &  0, \qquad \forall t >0.
\end{eqnarray*}
Thus the area of $\Gamma(t)$ tends to a minimal value, that we know to be corresponding to the sphere. 

\subsection{\textcolor{black}{Application to the Navier-Stokes model}} \label{sec-NSE2D}
We test the capacities of the method when the inertia forces are not neglected in comparison with the viscosity forces. We replace the first equation of the Stokes system by the Navier-Stokes Equation:
\begin{eqnarray*}
	\rho\left(\frac{\p \bu}{\p t} + (\bu \cdot \nabla) \bu\right) -\divg \sigma(\bu,p)  =  0.
\end{eqnarray*}
The density is denoted by $\rho^\pm$, and is chosen to be constant in $\Omega^+(t)$ and $\Omega^-(t)$. The other equations of system~\eqref{syscoupled} remain the same. The chosen parameters are listed in Table~\ref{table2}. 
\begin{table}
\begin{center}
\begin{eqnarray*}
& \begin{array} {|c|c|c|c|c|c|c|c|c|c|c|}
	\hline
	\rho^+ & \rho^- & \nu^+ & \nu^- & \mu & a_1^{(0)} & a_2^{(0)} & T & \Delta t & \alpha_0 & \gamma_0\\
	\hline 
	0.2 & 0.1 & 0.1 & 0.05 & 50 & 0.3537 & 0.2037 & 0.1 & 0.00025 & 0.01 & 0.01\\
	\hline 
\end{array} &
\end{eqnarray*}
\vspace*{-10pt}
		\caption{Simulation parameters, for the Navier-Stokes model in 2D.}
		\label{table2}
\end{center}
\end{table}
\FloatBarrier
\textcolor{black}{The time-derivative of $\bu$ is discretized with the implicit Euler scheme, but on the explicit domains $\Omega^\pm(t_n)$. The nonlinear term $(\bu \cdot \nabla) \bu$ is treated with a Newton method:} 
\begin{eqnarray*}
	\left\{ \begin{array} {rcll}
\rho\left(\displaystyle \frac{\bu^{(n+1)}-\bu^{(n)}}{\Delta t} + \left(\bu^{(n+1)} \cdot \nabla\right) \bu^{(n+1)}\right) -\divg \sigma\left(\bu^{(n+1)},p^{(n+1)}\right) & = & 0 &  \text{in } \Omega^\pm(t_n), \\
\divg \bu^{(n+1)} & = & 0 & \text{in } \Omega^{\pm}(t_{n}), \\
\left[ \sigma(\bu^{(n+1)},p^{(n+1)}) \right]\bn & = & -\mu \kappa(t_n) \bn(t_n), & \text{across } \Gamma(t_{n}).
\end{array} \right.
\end{eqnarray*}
Note that some degrees of freedom of $\bu^{(n+1)}$, corresponding to nodes of $\Omega^{+}(t_n)$ for instance, can be next considered in $\Omega^-(t_{n+1})$ for the next time step (and conversely). These nodes are those which are concerned by the update of the \textcolor{black}{interface} $\Gamma(t_n)$ into $\Gamma(t_{n+1})$, and the updates of the different matrices of the system require only the re-assembly of the terms whose indexes correspond to these nodes (see section~\ref{sec-smartupdate}). For updating $\Gamma(t_n)$ (and thus $\Omega^\pm(t_n)$), we still use the scheme adopted in section~\ref{subsec-simustokes}.

\begin{minipage}{\linewidth}
	\hspace{-0.05\linewidth}%
	\centering
	\begin{minipage}{0.32\linewidth}
		\begin{figure}[H]
			\includegraphics[trim = 16cm 3cm 12cm 3cm, clip, scale=0.18]{./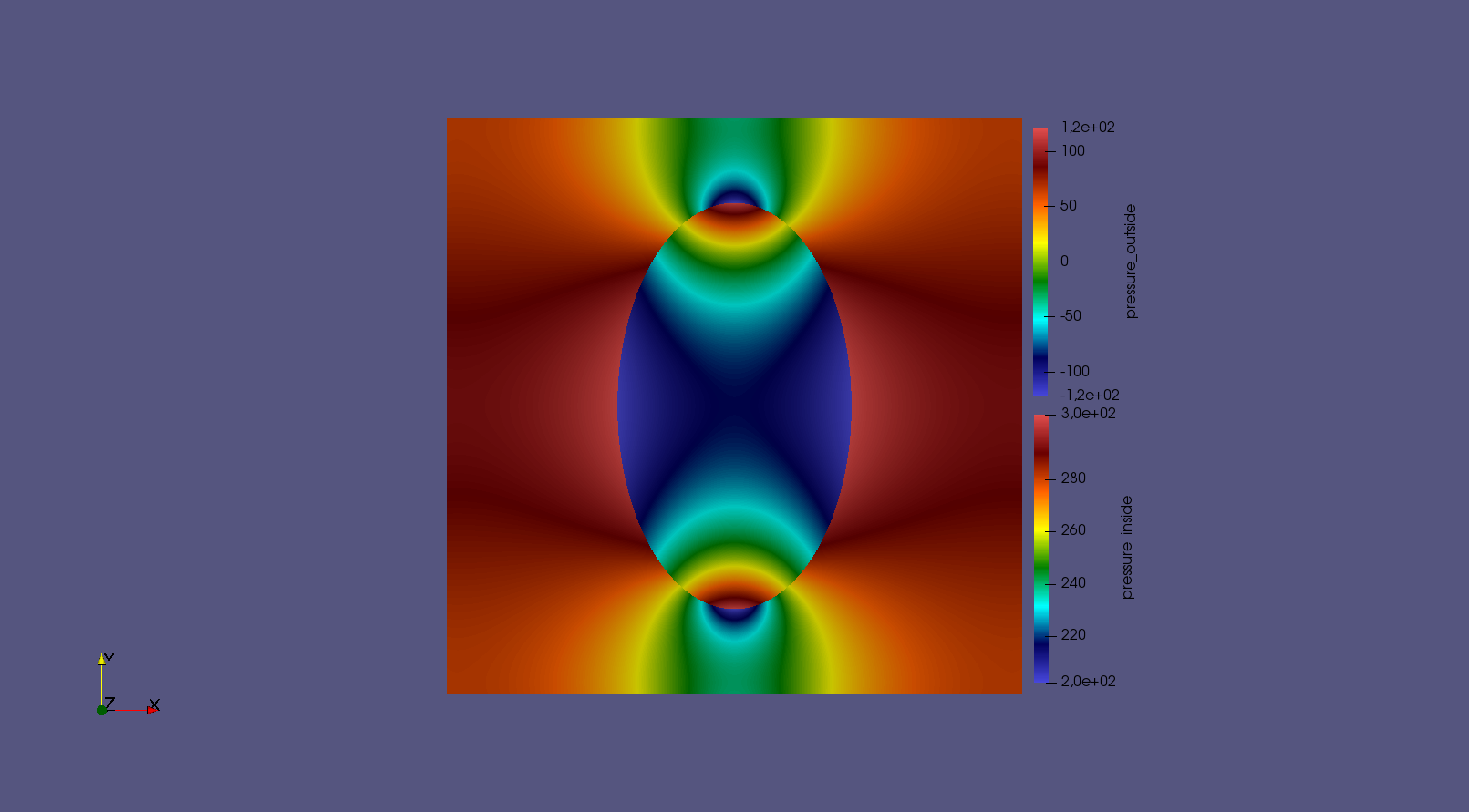}
			\begin{center}\begin{small} $ t = 0.0 $ \end{small}\end{center}
		\end{figure}
	\end{minipage}
	\hspace{0.0\linewidth}
	\begin{minipage}{0.32\linewidth}
		\begin{figure}[H]
			\includegraphics[trim = 16cm 3cm 12cm 3cm, clip, scale=0.18]{./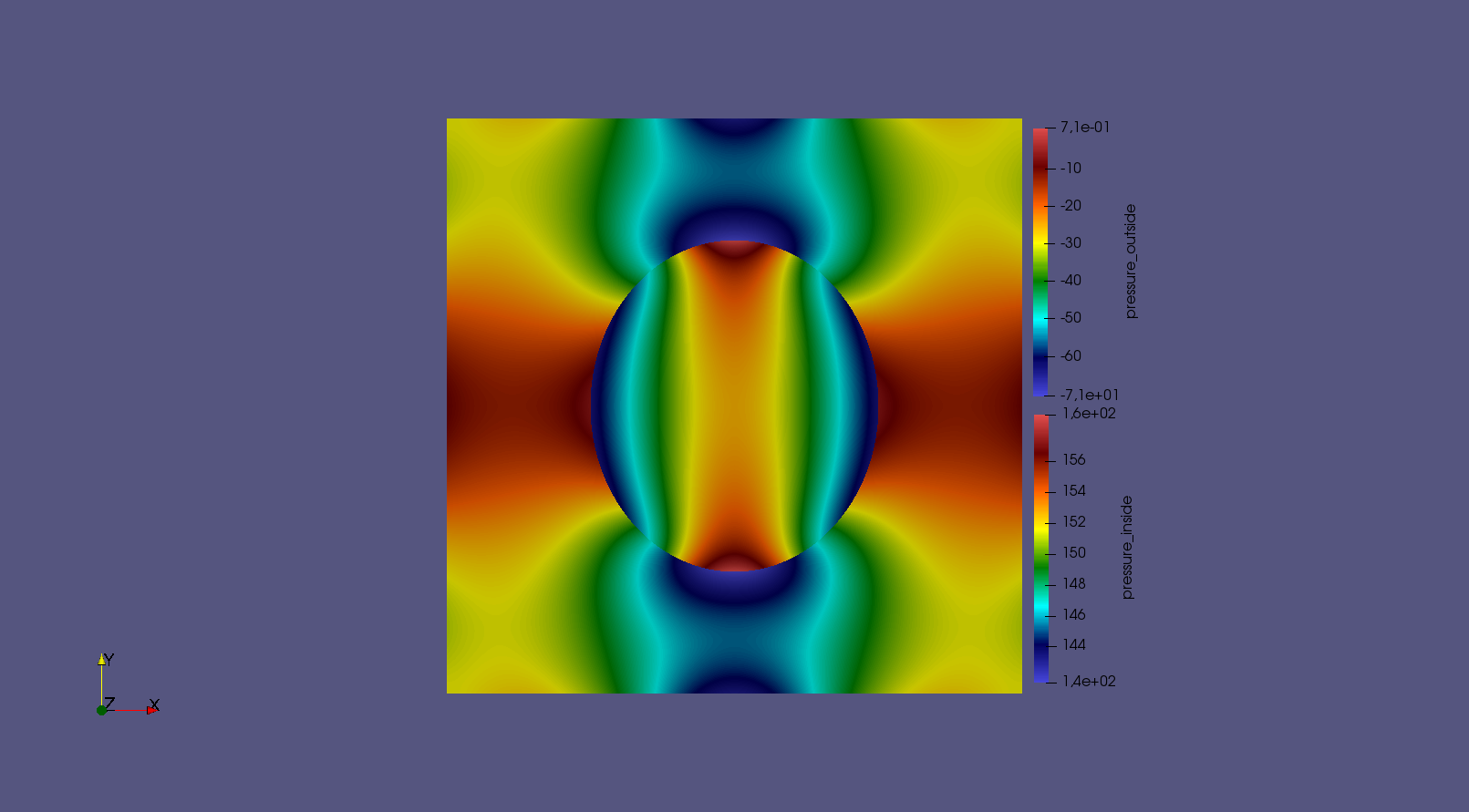}
			\begin{center}\begin{small} $ t = 0.00675 $ \end{small}\end{center}
		\end{figure}
	\end{minipage}
	\hspace{0.0\linewidth}
	\begin{minipage}{0.32\linewidth}
		\begin{figure}[H]
			\includegraphics[trim = 16cm 3cm 12cm 3cm, clip, scale=0.18]{./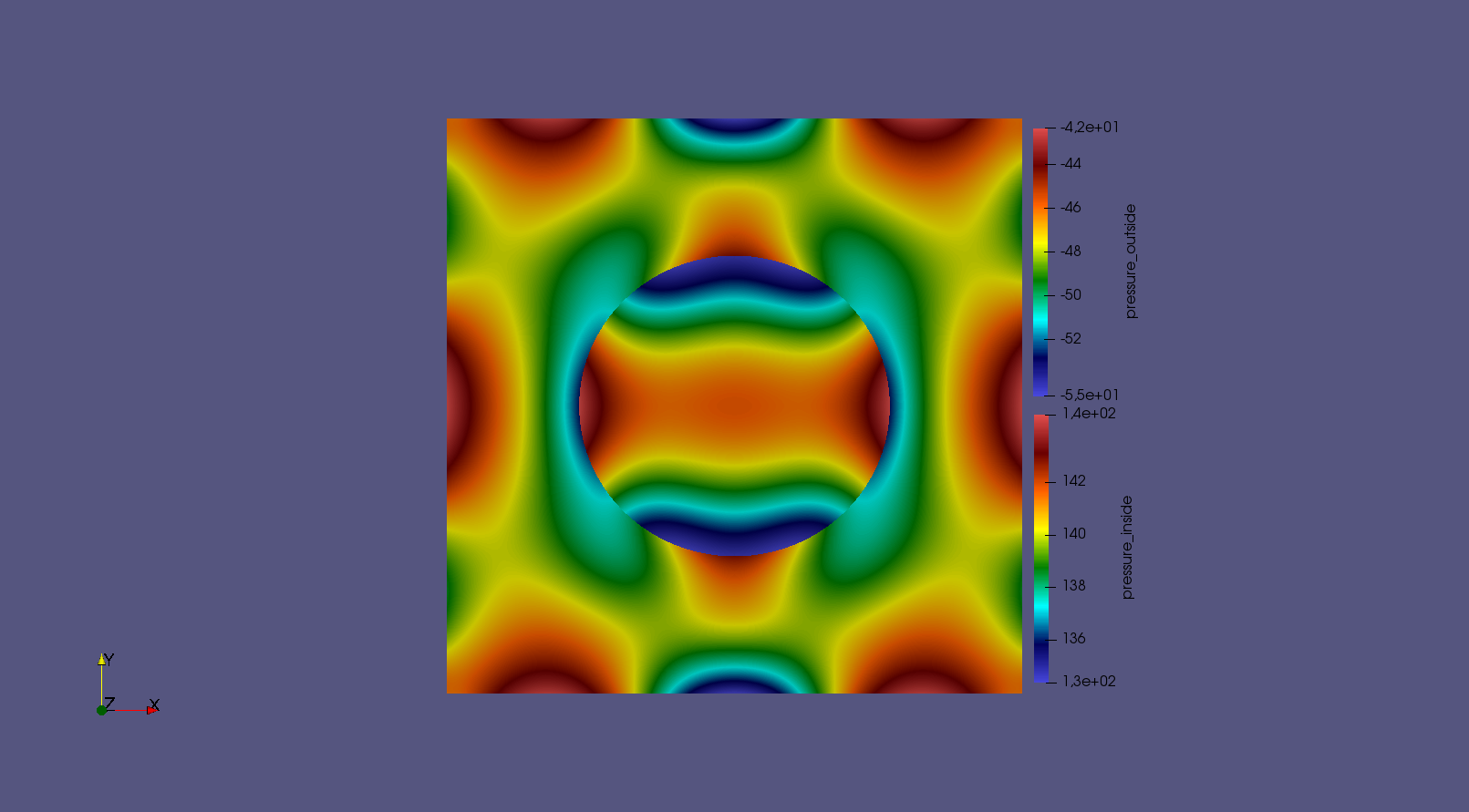}
			\begin{center}\begin{small} $ t = 0.00975 $ \end{small}\end{center}
		\end{figure}
	\end{minipage}
\\ 
\hspace{-0.05\linewidth}%
	\centering
	\begin{minipage}{0.32\linewidth}
		\begin{figure}[H]
			\includegraphics[trim = 16cm 3cm 12cm 3cm, clip, scale=0.18]{./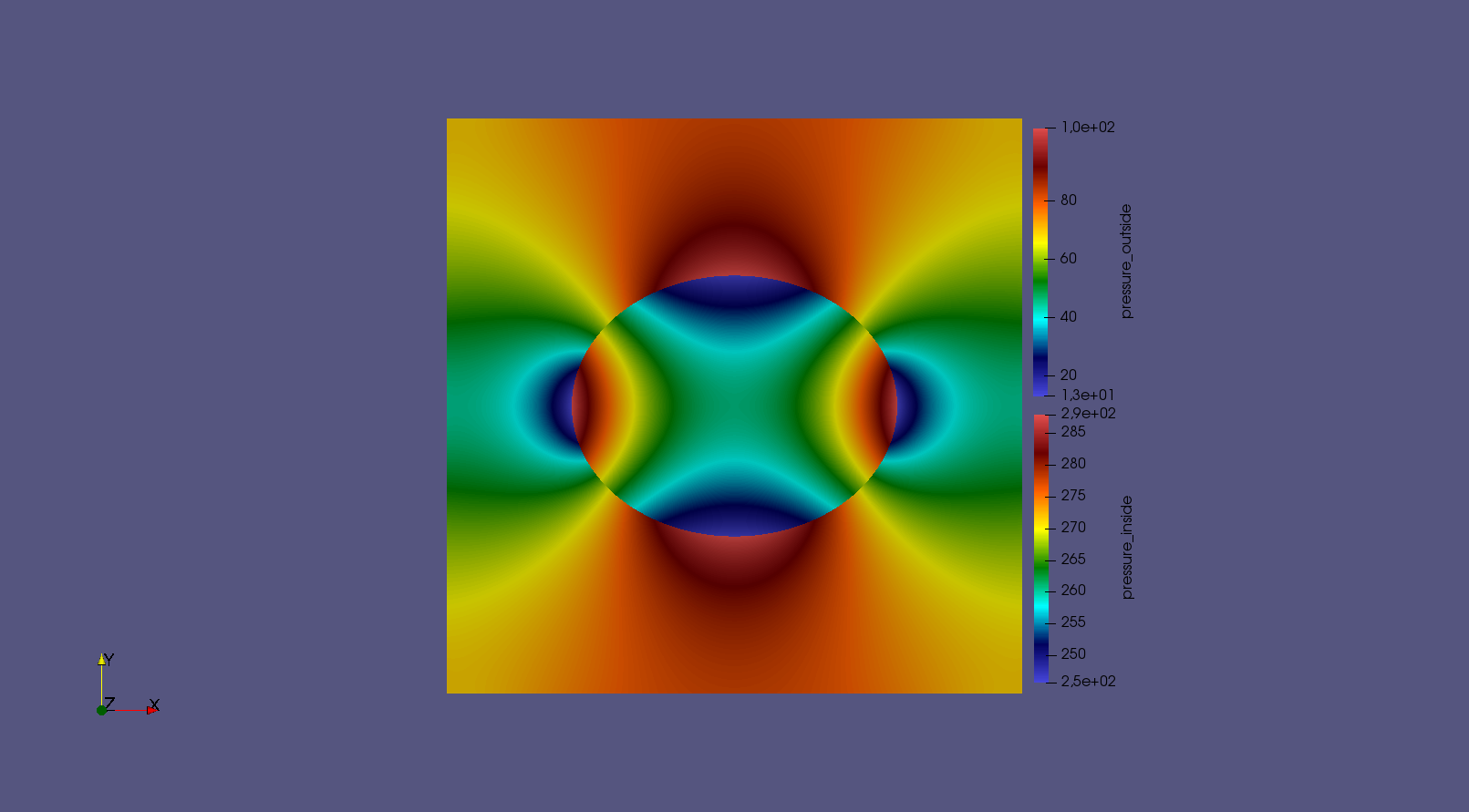}
			\begin{center}\begin{small} $ t = 0.0135 $ \end{small}\end{center}
		\end{figure}
	\end{minipage}
	\hspace{0.0\linewidth}
	\begin{minipage}{0.32\linewidth}
		\begin{figure}[H]
			\includegraphics[trim = 16cm 3cm 12cm 3cm, clip, scale=0.18]{./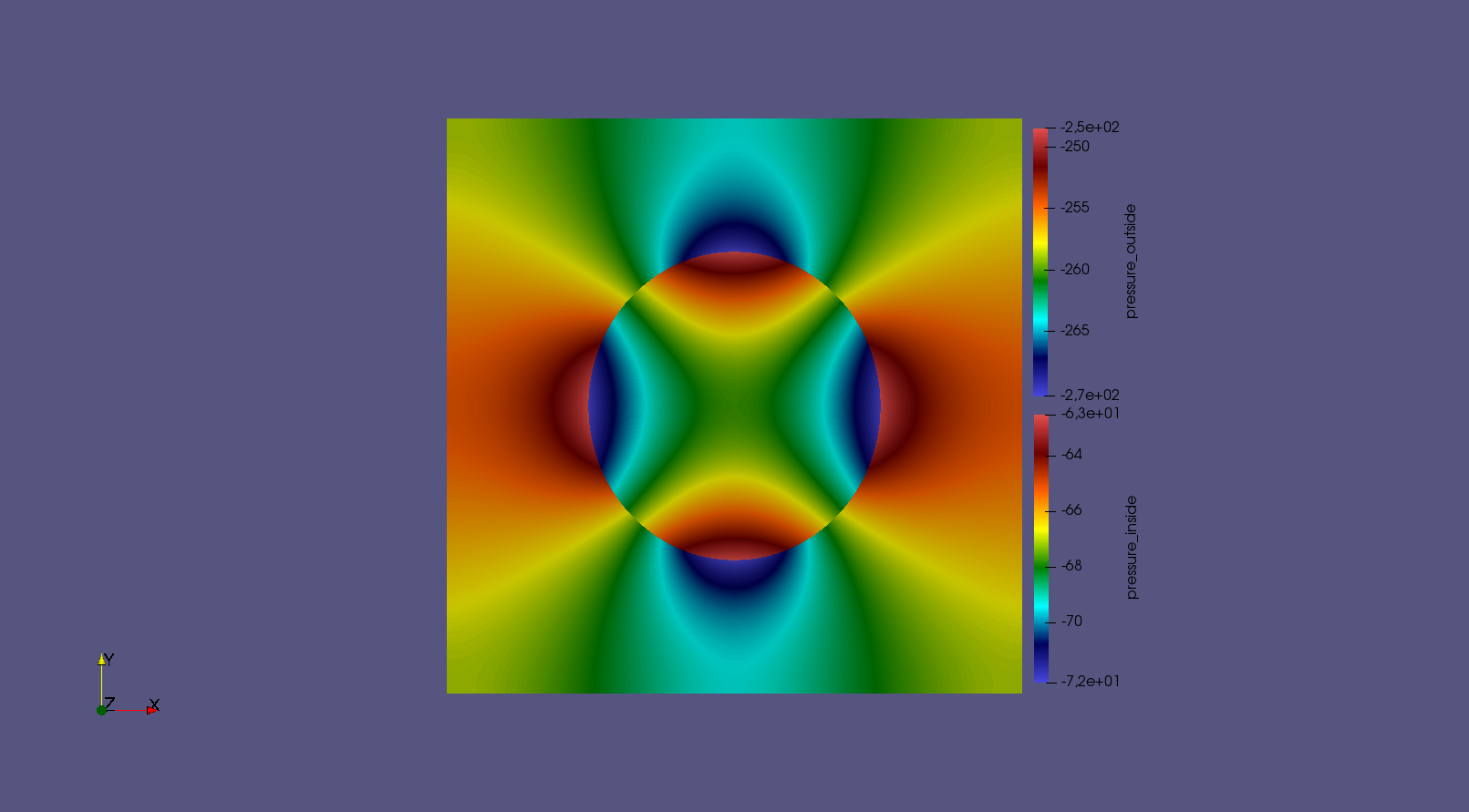}
			\begin{center}\begin{small} $ t = 0.02725 $ \end{small}\end{center}
		\end{figure}
	\end{minipage}
	\hspace{0.0\linewidth}
	\begin{minipage}{0.32\linewidth}
		\begin{figure}[H]
			\includegraphics[trim = 16cm 3cm 12cm 3cm, clip, scale=0.18]{./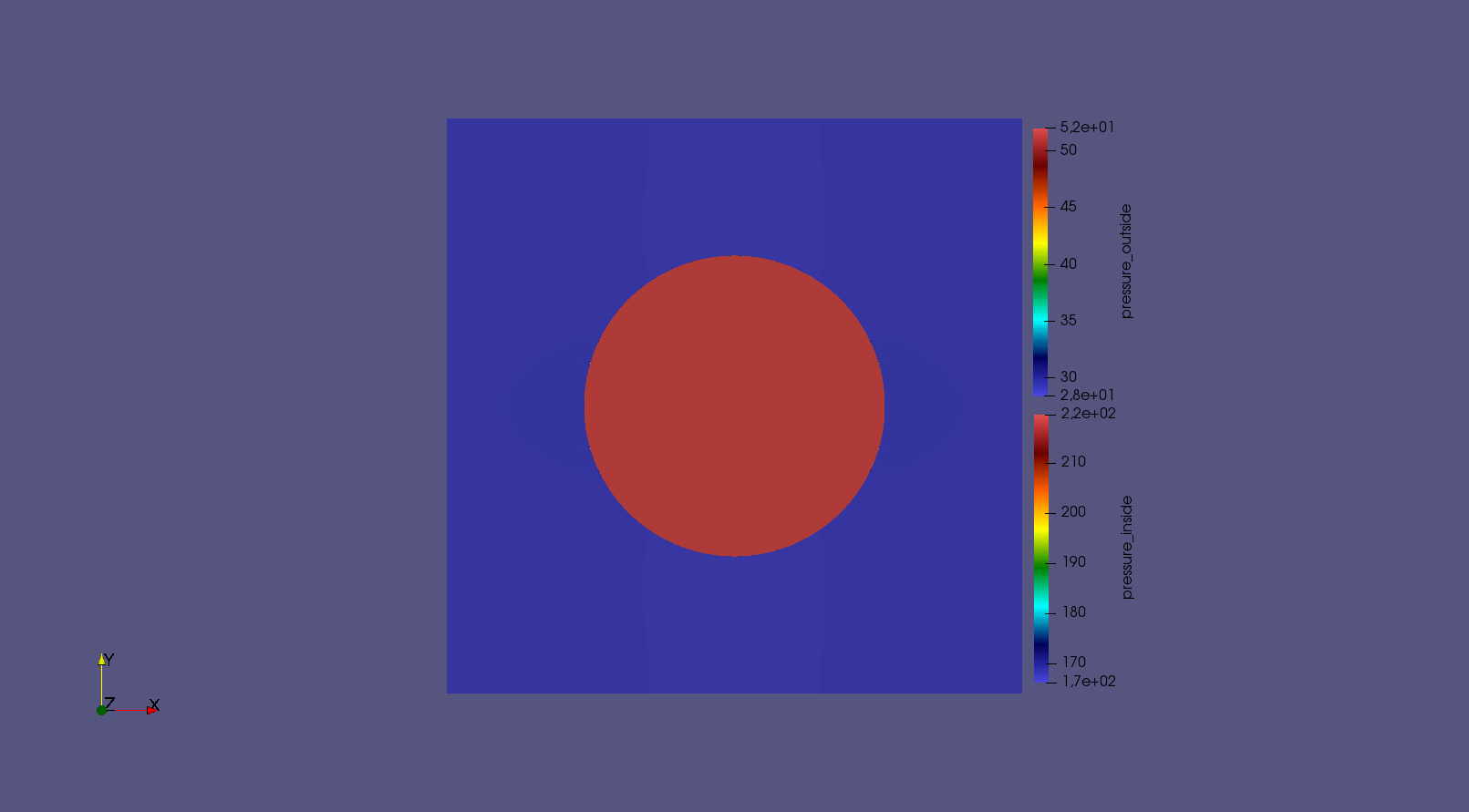}
			\begin{center}\begin{small} $ t = 0.08 $ \end{small}\end{center}
		\end{figure}
	\end{minipage}
\end{minipage}
\begin{minipage}{\linewidth}
	\begin{figure}[H]
		\caption{Time evolution of the pressure outside and inside the interface for the Navier-Stokes model with surface tension forces. \textcolor{white}{}\label{fig-pressureNSE}}
	\end{figure}
\end{minipage}\\
\FloatBarrier

\begin{minipage}{\linewidth}
	\hspace{-0.05\linewidth}%
	\centering
	\begin{minipage}{0.32\linewidth}
		\begin{figure}[H]
			\includegraphics[trim = 16cm 3cm 12cm 3cm, clip, scale=0.18]{./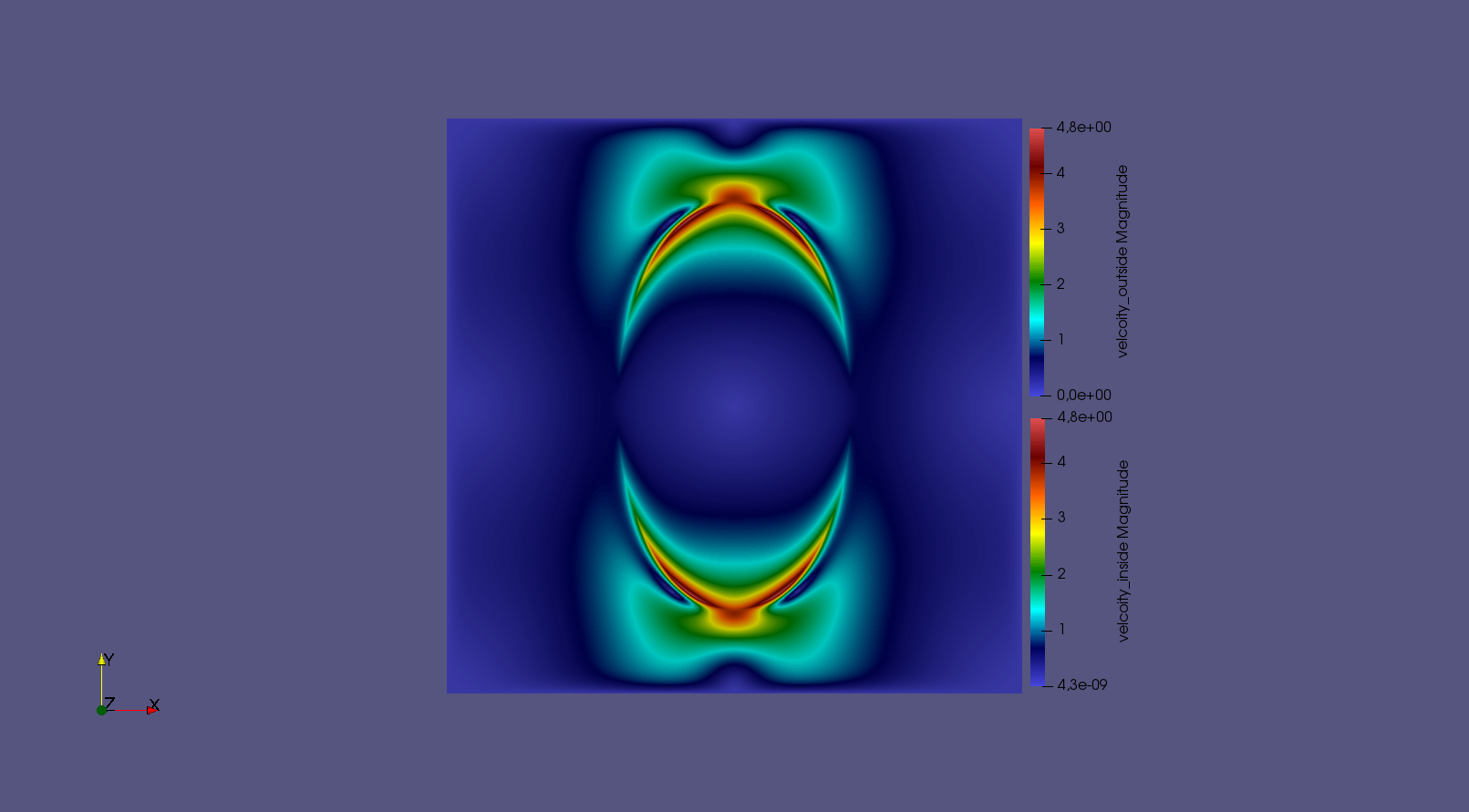}
			\begin{center}\begin{small} $ t = 0.0 $ \end{small}\end{center}
		\end{figure}
	\end{minipage}
	\hspace{0.0\linewidth}
	\begin{minipage}{0.32\linewidth}
		\begin{figure}[H]
			\includegraphics[trim = 16cm 3cm 12cm 3cm, clip, scale=0.18]{./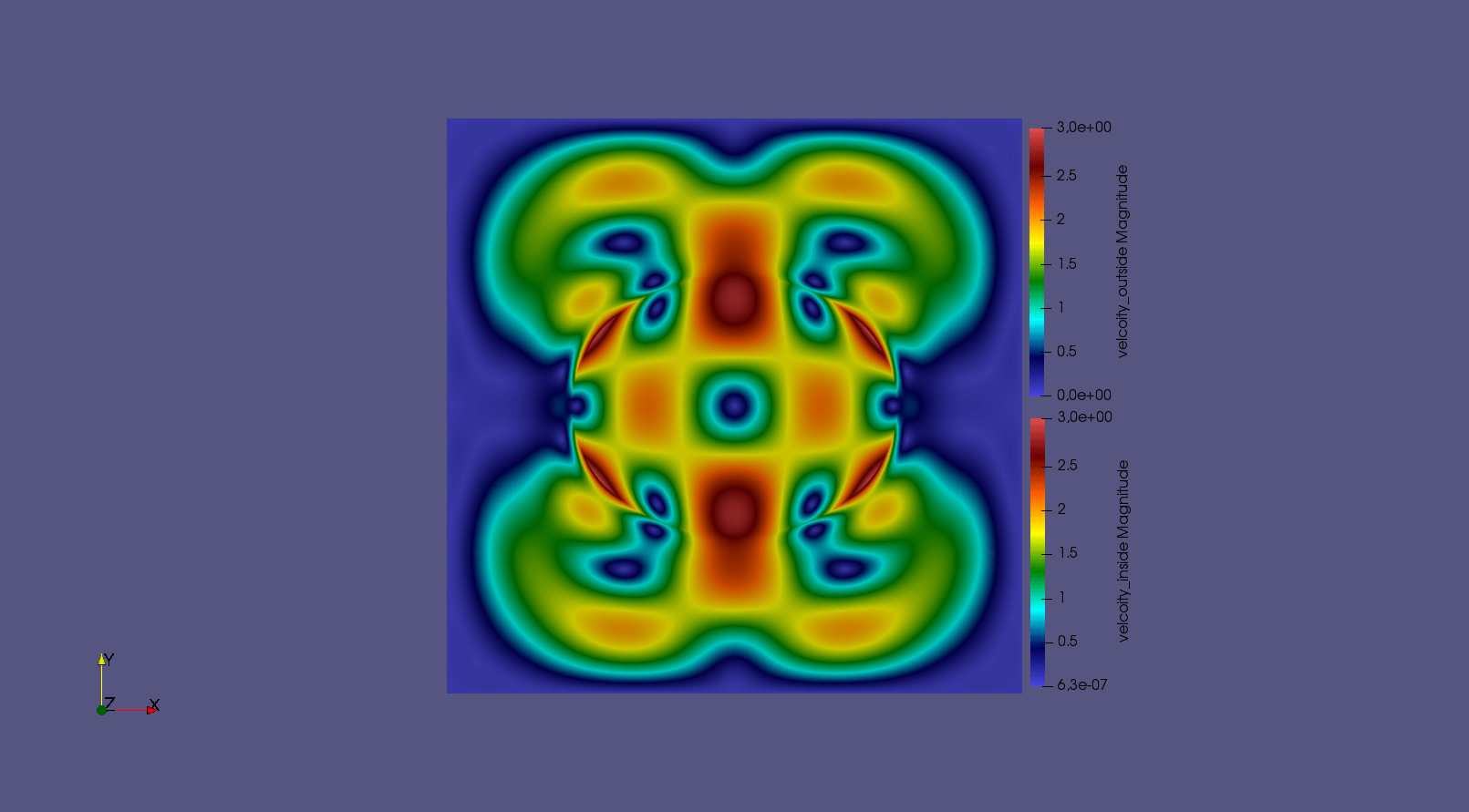}
			\begin{center}\begin{small} $ t = 0.00675 $ \end{small}\end{center}
		\end{figure}
	\end{minipage}
	\hspace{0.0\linewidth}
	\begin{minipage}{0.32\linewidth}
		\begin{figure}[H]
			\includegraphics[trim = 16cm 3cm 12cm 3cm, clip, scale=0.18]{./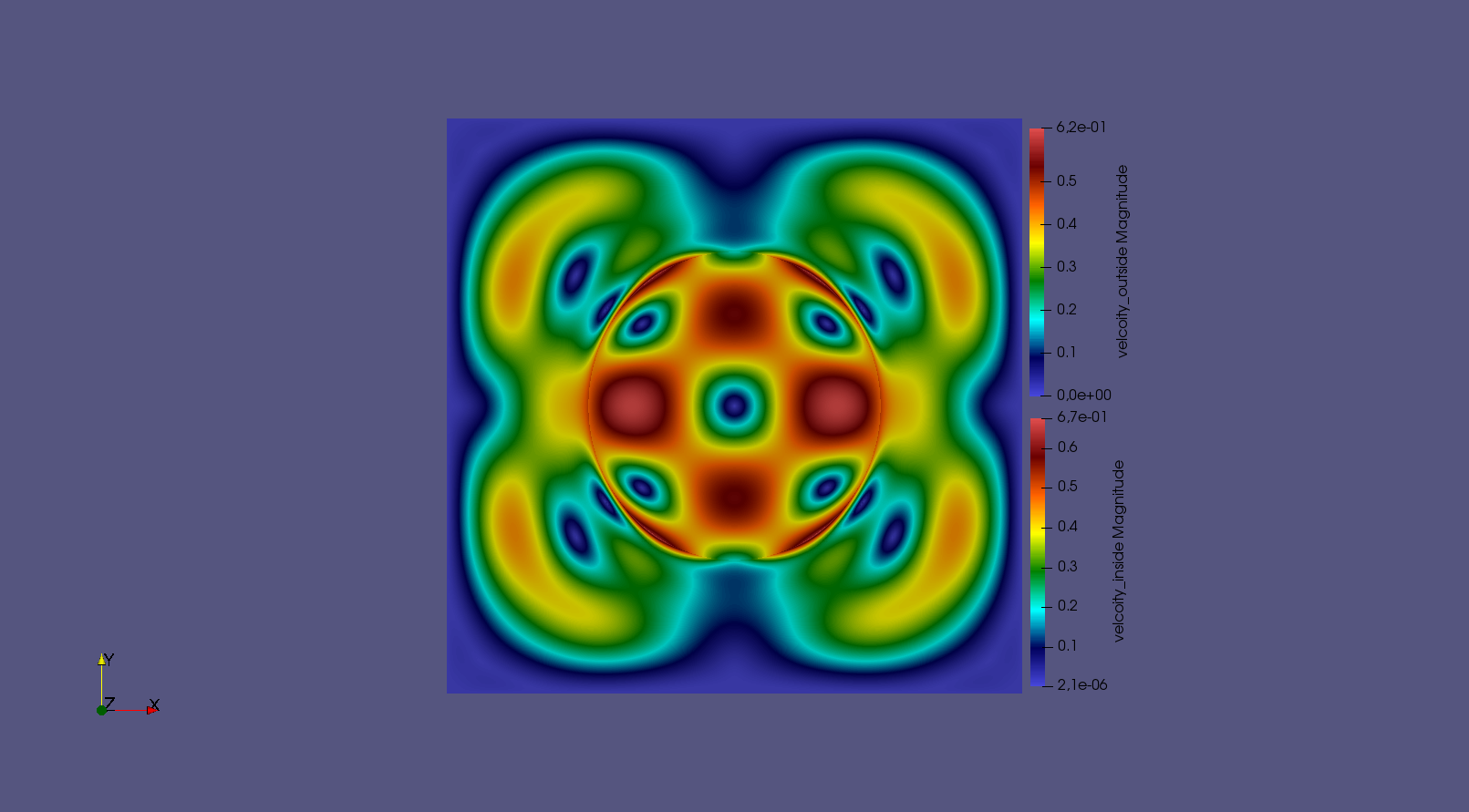}
			\begin{center}\begin{small} $ t = 0.00975 $ \end{small}\end{center}
		\end{figure}
	\end{minipage}
\\ 
\hspace{-0.05\linewidth}%
	\centering
	\begin{minipage}{0.32\linewidth}
		\begin{figure}[H]
			\includegraphics[trim = 16cm 3cm 12cm 3cm, clip, scale=0.18]{./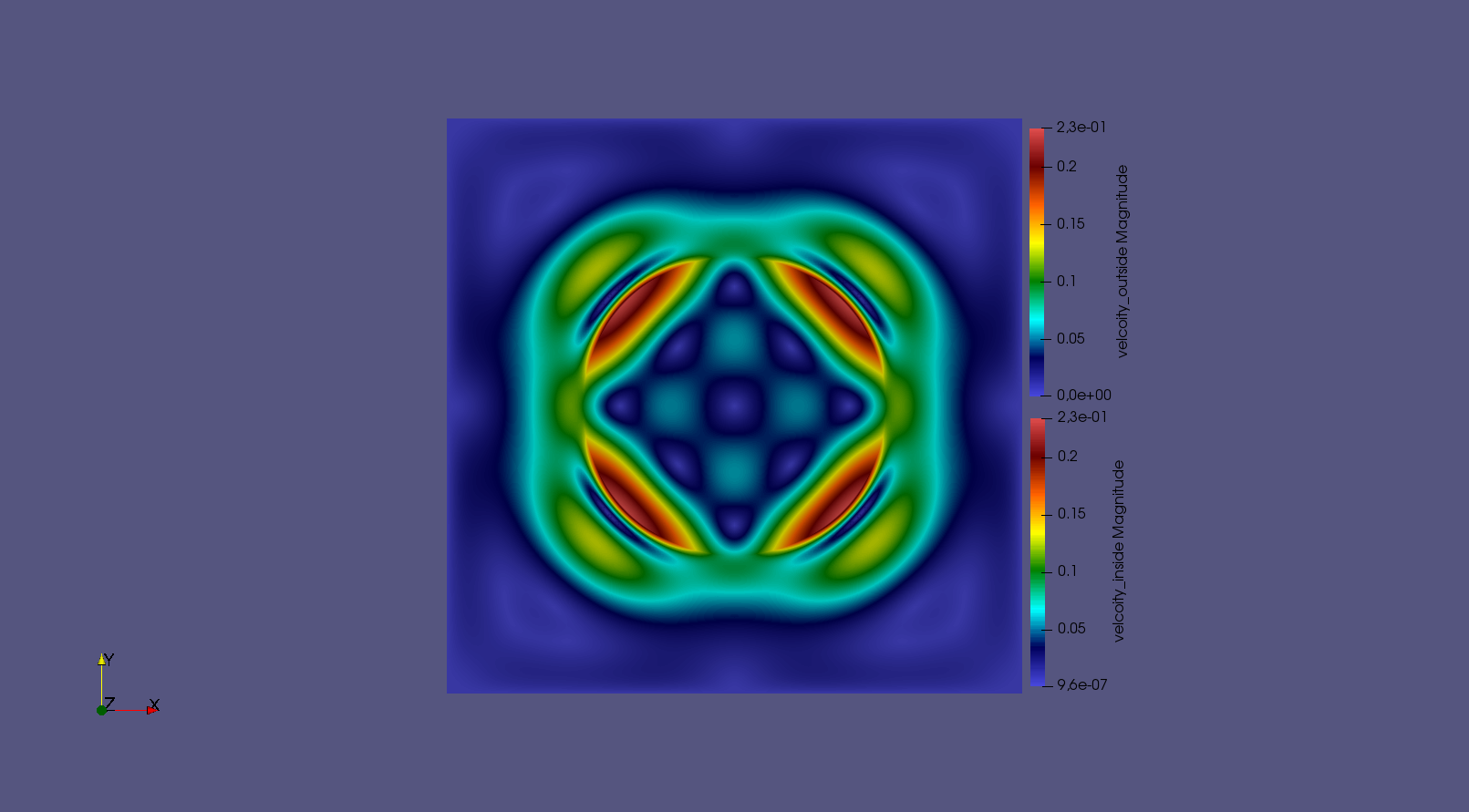}
			\begin{center}\begin{small} $ t = 0.0135 $ \end{small}\end{center}
		\end{figure}
	\end{minipage}
	\hspace{0.0\linewidth}
	\begin{minipage}{0.32\linewidth}
		\begin{figure}[H]
			\includegraphics[trim = 16cm 3cm 12cm 3cm, clip, scale=0.18]{./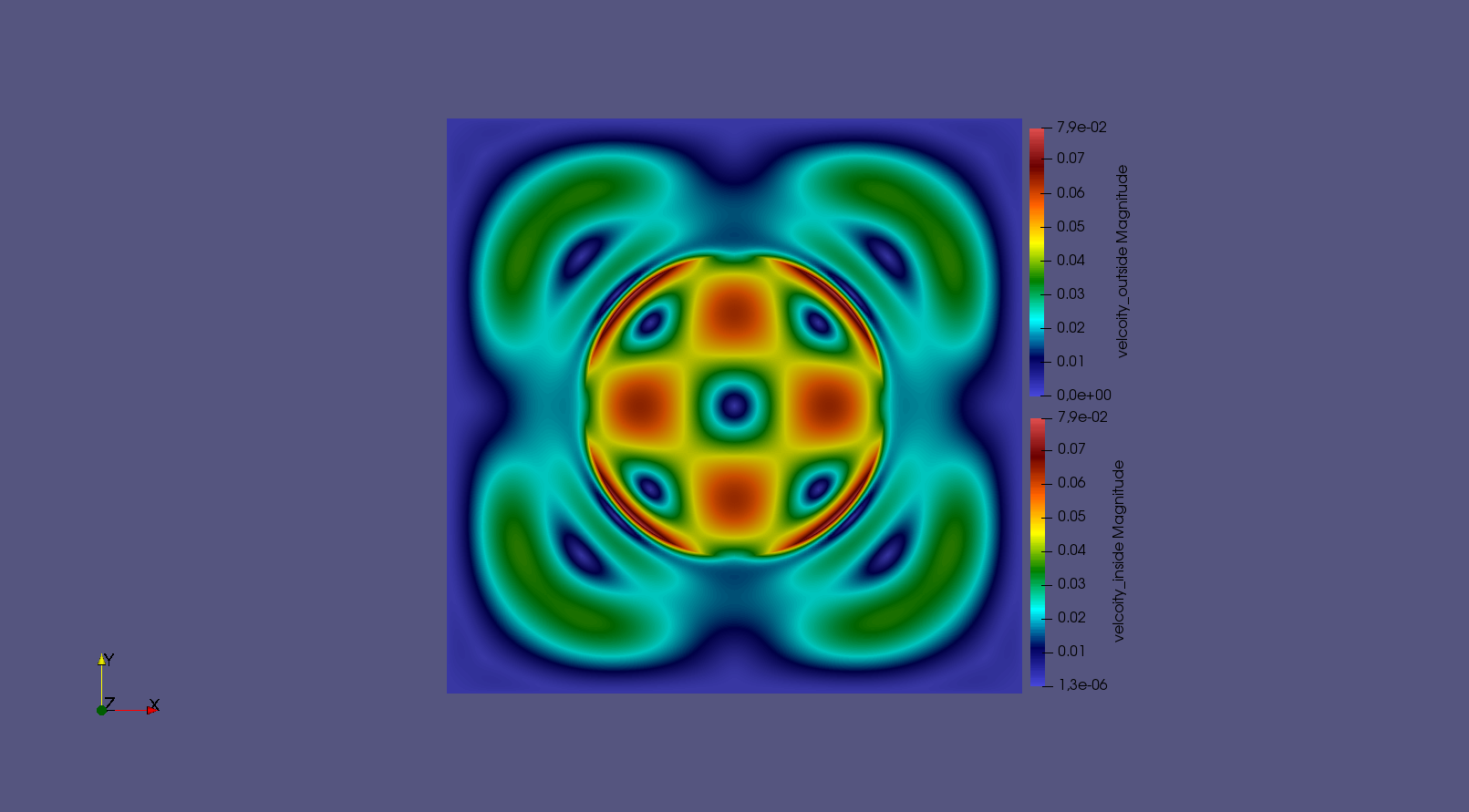}
			\begin{center}\begin{small} $ t = 0.02725 $ \end{small}\end{center}
		\end{figure}
	\end{minipage}
	\hspace{0.0\linewidth}
	\begin{minipage}{0.32\linewidth}
		\begin{figure}[H]
			\includegraphics[trim = 16cm 3cm 12cm 3cm, clip, scale=0.18]{./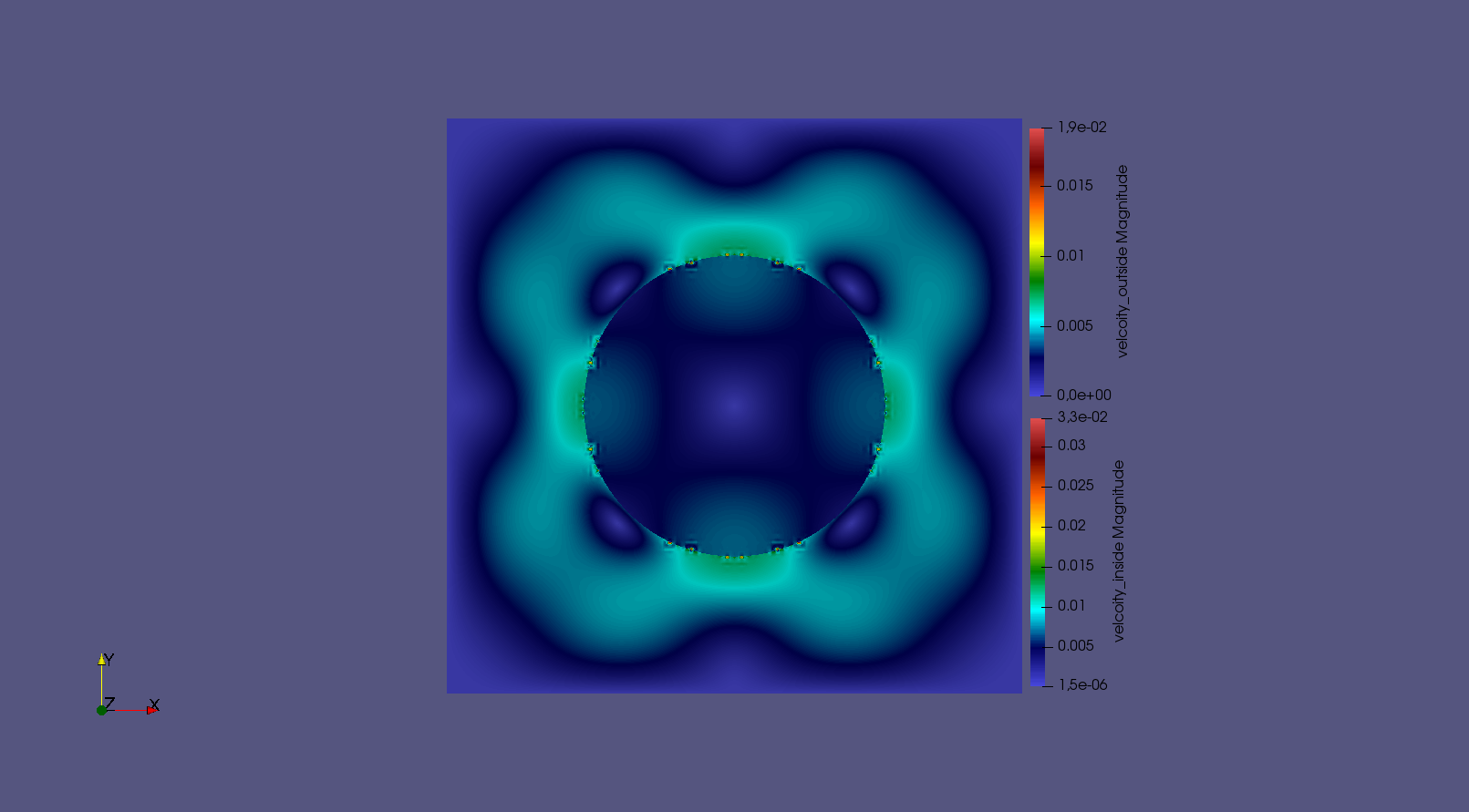}
			\begin{center}\begin{small} $ t = 0.08 $ \end{small}\end{center}
		\end{figure}
	\end{minipage}
\end{minipage}
\begin{minipage}{\linewidth}
	\begin{figure}[H]
		\caption{Time evolution of the velocity outside and inside the interface for the Navier-Stokes model with surface tension forces. \textcolor{white}{}\label{fig-velocityNSE}}
	\end{figure}
\end{minipage}\\
\FloatBarrier

\begin{center}
\hspace*{-10pt}\includegraphics[trim = 0cm 0.5cm 0cm 1.5cm, clip, scale=0.35]{./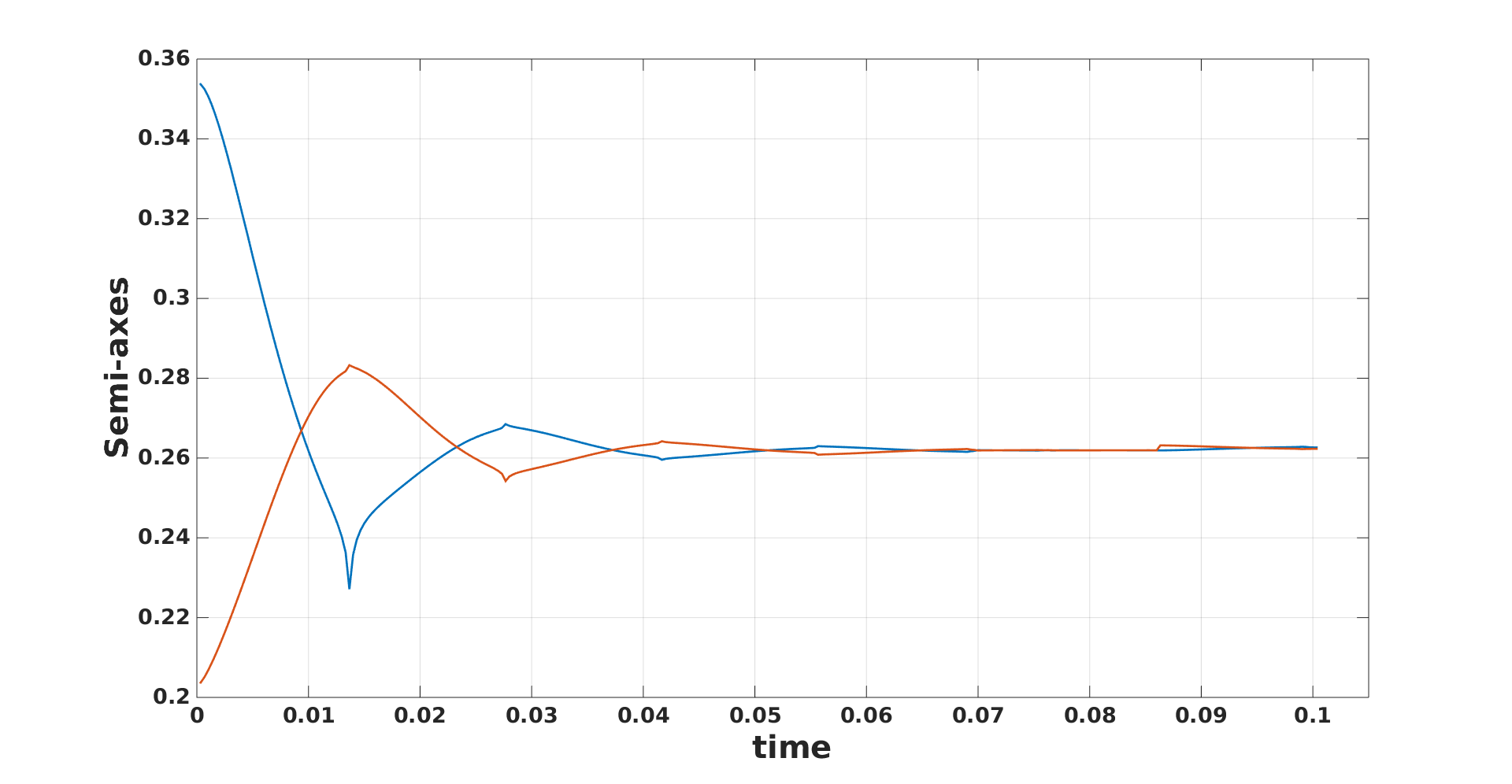}
\begin{figure}[H]
	\vspace*{-30pt}\caption{Time evolution of the semi-axes of the ellipse in dimension 2, for the Navier-Stokes model.}
	\label{fig-ellipse-NSE}
\end{figure}
\end{center}
\FloatBarrier
\textcolor{black}{The respective behaviors of the pressure and the velocity are represented in Figure~\ref{fig-pressureNSE} and Figure~\ref{fig-velocityNSE}. In Figure~\ref{fig-pressureNSE} we can observe the discontinuity of the pressure and its convergence to constant values. In Figure~\ref{fig-velocityNSE} we observe the velocity converges to zero, up to slight artifacts which appear after a long time. The time evolution of the semi-axes of the ellipsoid is represented in Figure~\ref{fig-ellipse-NSE}.} In comparison with Figure~\ref{fig-ellipse-Stokes}, Figure~\ref{fig-ellipse-NSE} shows inertia effects in the convergence of the semi-axes to a constant value. The decrease of the energy function is given by the \textcolor{black}{inequality}
\begin{eqnarray*}
	 \frac{\d}{\d t}\left(\frac{\rho^+}{2} \| \bu^+(\cdot,t)\|^2_{\mathbf{L}^2(\Omega^+(t))} + \frac{\rho^-}{2} \| \bu^-(\cdot,t)\|^2_{\mathbf{L}^2(\Omega^-(t))} + \mu\left|\Gamma(t)\right| \right) 
& \leq & 0,
\qquad \forall t >0.
\end{eqnarray*}
The conservation of mass is studied in Figure~\ref{Fig-volNSE}, where we plot the relative error of mass conservation (during the time iterations), namely the quantity defined by
\begin{eqnarray*}
100.0*\frac{|M(t_n)-M(t_0=0)|}{M(t_0)}, & & \text{with } M(t_n) = \rho^- \pi a_n^{(1)}a_n^{(2)}.
\end{eqnarray*}
This quantity represents the relative error made on the mass conservation.
\begin{center}
	\hspace*{-10pt}\includegraphics[trim = 0cm 0.5cm 0cm 1.5cm, clip, scale=0.35]{./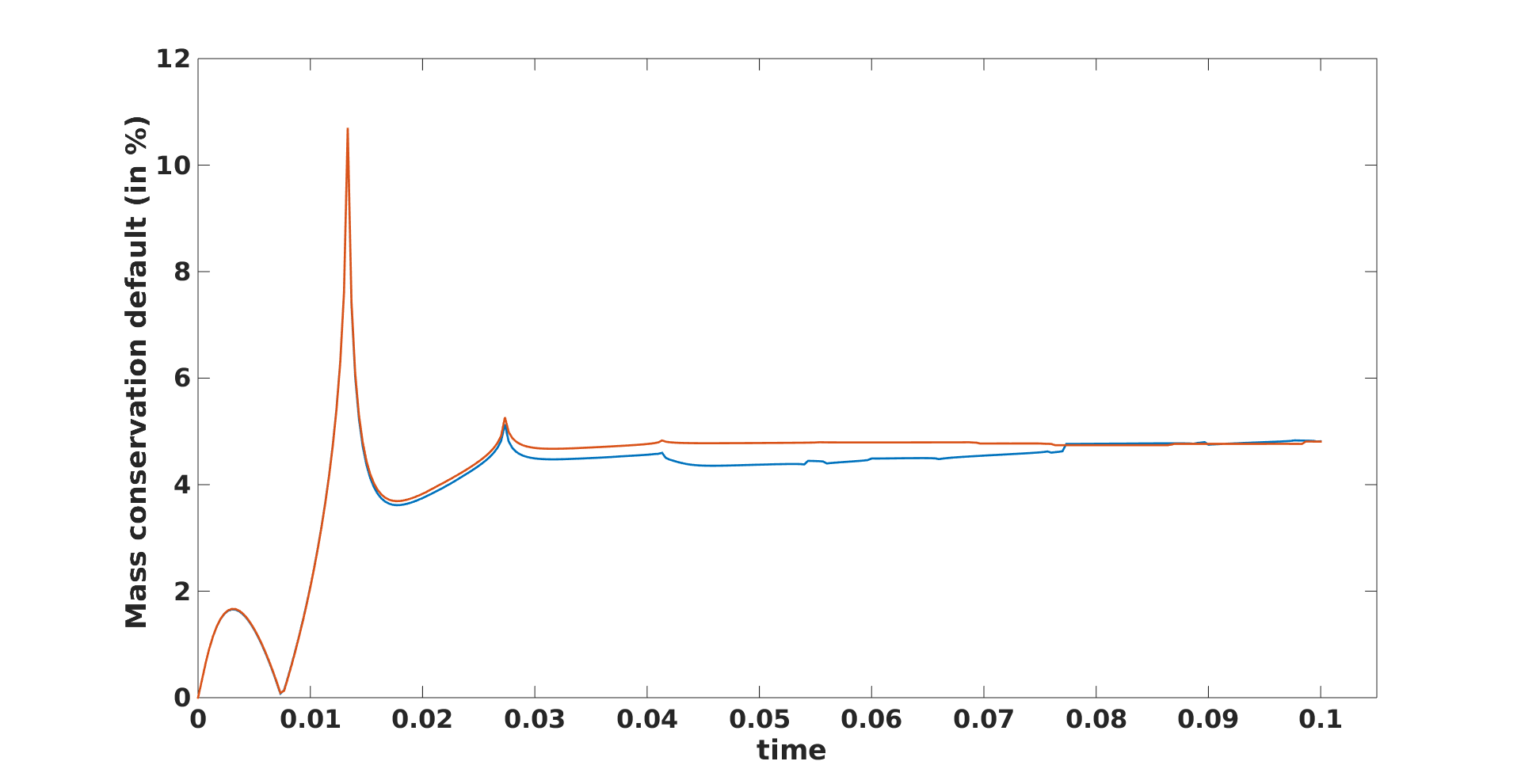}
	\begin{figure}[H]
		\vspace*{-30pt}\caption{Relative mass conservation default, for the Navier-Stokes model, \textcolor{black}{in red for a given mesh (40 subdivisions), in blue for a twice as fine mesh (80 subdivisions)}.}
		\label{Fig-volNSE}
	\end{figure}
\end{center}
\FloatBarrier
We see in Figure~\ref{Fig-volNSE} that the error can reach approximately 10\%. This error could be \textcolor{black}{slightly} improved by choosing more precise elements, \textcolor{black}{but rather by choosing} a smaller time step. However, a more appropriate time-stepping should be performed, in order to obtain numerically the mass conservation, automatically. This property is crucial for more sophisticated simulations, where the goal would be to stick to benchmarks. \textcolor{black}{Note that the mass conservation default is not showed for the Stokes model in section~\ref{subsec-simustokes}, since it is close to zero, and so this mass conservation default is clearly due to the material derivative terms.}

\paragraph{\textcolor{black}{Tests at the equilibrium.}}
\textcolor{black}{Let us test now consider the configuration at the equilibrium. The latter is characterized by a null velocity and a constant pressure on both sides of the interface reduced to a circle. The Laplace-Young law predicts that the difference of pressures must satisfy
\begin{eqnarray*}
\Delta p:= p^- - p^+ & = & \mu / r,
\end{eqnarray*}
where $r$ denotes the radius of the interface. This is observed in Table~\ref{table3} and Figure~\ref{fig-Young-Laplace}, where tests are performed in the same fashion as in~\cite[section~5.1]{Turek2017}.
}\\

\begin{minipage}{\linewidth}
	\hspace{-0.08\linewidth}%
	\centering
	\begin{minipage}{0.48\linewidth}
		\begin{figure}[H]
			\includegraphics[trim = 19cm 10cm 15cm 9cm, clip, scale=0.36]{./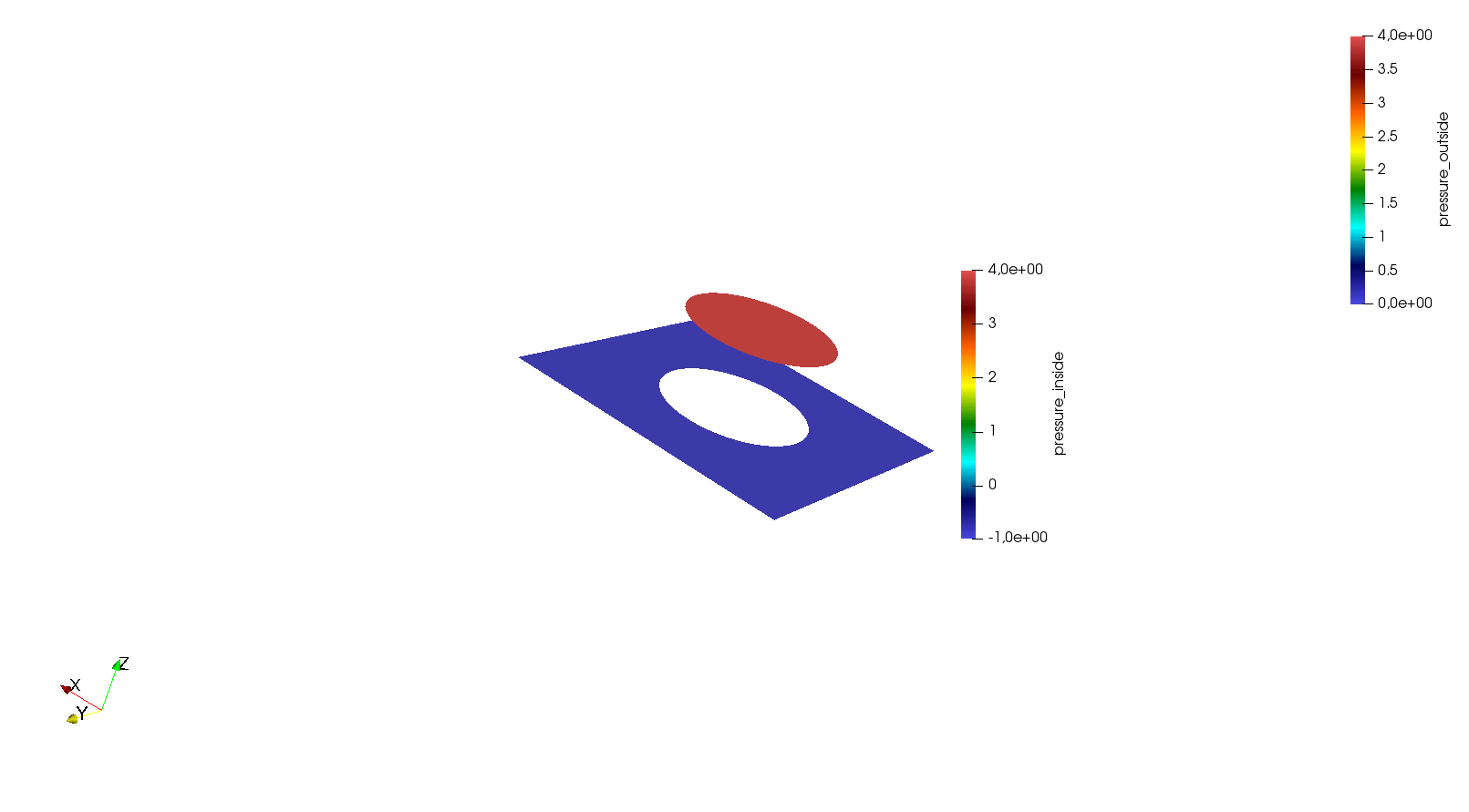}
		\end{figure}
	\end{minipage}
	\hspace{-0.02\linewidth}
	\begin{minipage}{0.48\linewidth}
		\begin{figure}[H]
			\includegraphics[trim = 0cm 0cm 0cm 0cm, clip, scale=0.25]{./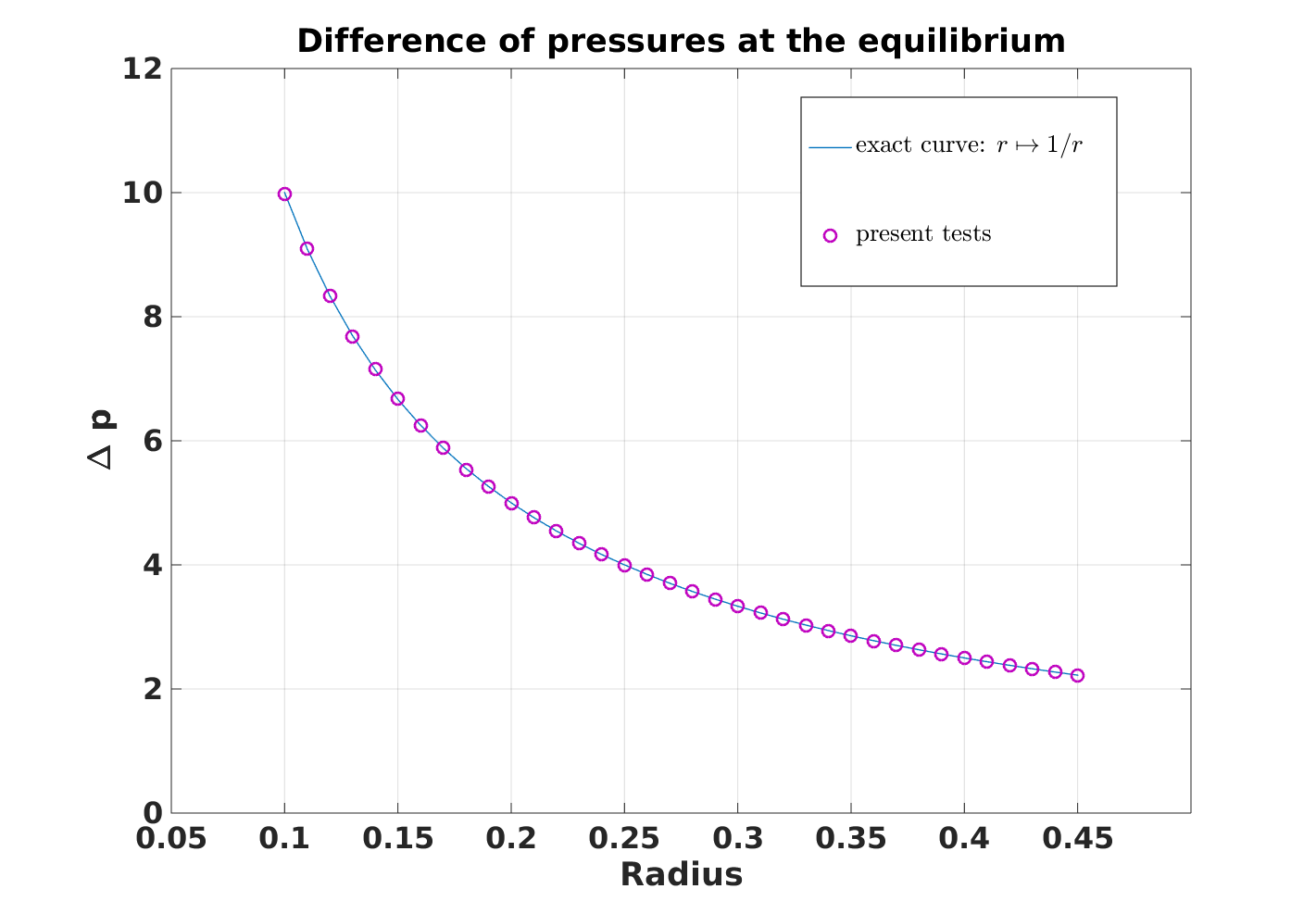}
		\end{figure}
	\end{minipage}
\begin{figure}[H]
	\vspace*{-20pt}\centering\caption{Pressures values computed at the equilibrium for $\mu = 1$ and $h = 0.05$. Left, the radius is 0.25. Right, the difference of pressures is computed for different values of the radius.}
	\label{fig-Young-Laplace}
\end{figure}
\end{minipage}

\begin{table}
\begin{center}
\begin{eqnarray*}
& \begin{array} {|c|c|c|c|c|}
	 h & p^+ & p^- & |\Delta p - \mu/r|/(\mu/r) & |\Delta p |/(\mu/r) \\
	\hline 
	 0.1 & -0.93593 & 3.07232 & 0.00206 & 1.00206 \\
	\hline 
	 0.05 & -0.08705 & 3.90449 & 0.00211 & 0.997885 \\
	\hline 
	 0.025 & -9.9013e-05 & 4.09994 & 0.02501 & 1.02501 \\
	\hline 
	 0.0125 & -0.00268 & 3.99513 & 0.00055 & 0.99945 \\
	\hline 
\end{array} &
\end{eqnarray*}
\vspace*{-10pt}
		\caption{Pressure computations for static bubble, with surface tension coefficient $\mu = 1$ and radius $r = 0.25$.}
		\label{table3}
\end{center}
\end{table}
\FloatBarrier
\hfill \\

\textcolor{black}{In Table~\ref{table3} the static pressures on both sides are presented in function of the mesh size. It is showed that globally the values of the pressures converge to the correct ones, but the difference of pressures is well-computed mainly for the finest mesh. In Figure~\ref{fig-Young-Laplace} we observe that the difference of pressures is well-computed for different values of the radius of the interface.}



\section{Conclusion} \label{sec-conclusion}
In this work we developed a mixed finite element method for the Stokes problem with jump \textcolor{black}{interface} conditions involving surface-tension-type forces. Besides, the \textcolor{black}{interface} is taken into account with a fictitious domain approach, in order to avoid remeshing when this \textcolor{black}{interface} changes. The jump condition is taken into account with a multiplier, whose role is central in coupled problems like simulating the motion of a bubble-soap in a viscous incompressible fluid. The interest of our approach lies in obtaining of a good approximation for this multiplier, while sparing computation time due to remeshing and assembly procedure. Moreover, this fictitious domain method is quite simple to be implemented, and it optimizes the complexity of local treatments for approximating dual variables (like the multiplier aforementioned) that are defined on the \textcolor{black}{interface}. But it requires an augmented Lagrangian approach, in order to prove theoretical convergence for these dual variables, and also to stabilize the approximation of the latter. \textcolor{black}{By this means, this article proposes an alternative to Xfem approaches, by avoiding the use of additional singular basis functions, and thus by simplifying the implementation. Besides}, there is a gap between theoretical analysis and numerical observations, \textcolor{black}{since a part of the stabilization technique, necessary for the theoretical analysis, seems to be unnecessary in practice} for stabilizing one of the multipliers. Without stabilization, our theoretical analysis seems to be too pessimistic, and perhaps convergence properties are hidden in the structure of the Lagrangian functional we rely on. This theoretical point remains to be investigated deeper. For anticipating an unsteady case, the crucial criterion of robustness with respect to the geometry was tested and verified (with the other part of the stabilization technique). Illustrations of time evolution of an ellipsoidal bubble-soap were provided, showing the good behavior of the method, even in the case of inertial effects. A more sophisticated time-stepping, improving the behavior of the method for the model of the Navier-Stokes Equations (conservation of mass for instance), as well as consideration of more general type of deformations are expected in a forthcoming work.

\appendix

\section{Proofs of technical results}

\subsection{Proof of Proposition~\ref{prop-cvnaive}} \label{App-A1}

We remind the following classical result about the theory of saddle-point problems (see~\cite[section~2.4.1]{Ern} for more details).

\begin{lemma} \label{lemma-SPP}
Let $X$ and $M$ be Hilbert spaces, and $A(\cdot,\cdot):X\times X \rightarrow \R$ and $B(\cdot,\cdot):X\times M \rightarrow \R$ be continuous bilinear forms. Assume that $A$ is coercive
\begin{eqnarray*}
	A(w,w) \geq \alpha \|w\|_X^2 & &  \forall w\in X, 
\end{eqnarray*}
and that $B$ satisfies the following inf-sup condition
\begin{eqnarray*}
	\inf_{\pi\in M\setminus\{0\}} \sup_{\pi\in X\setminus\{0\}} \frac{B(w,\pi)}{\|w\|_X \|\pi\|_M}
	& \geq & \beta,
\end{eqnarray*}
for some constant $\alpha, \beta >0$. Then, for all $\gamma \in X'$ and $\delta \in M'$ the problem
\begin{eqnarray*}
	& & \text{Find } w \in X \text{ and } \pi \in M \text{ such that } \\
	& & \left\{ \begin{array} {rcl}
	A(w,v) + B(v,\pi) = \langle \gamma, v \rangle_{X';X} & & \forall v\in X, \\
	B(w,q) = \langle \delta; q \rangle_{M';M} & & \forall q \in M,
	\end{array}
	\right.
\end{eqnarray*}
admits a unique solution, that satisfies
\begin{eqnarray}
	\|w\|_X + \|\pi\|_M & \leq & C\left(\| \gamma \|_{X'} + \|\delta \|_{M'} \right). 
	\label{est-SPP}
\end{eqnarray}
The constant $C$ above depends only on $\alpha$, $\beta$, and the norms of $A$ and $B$.
\end{lemma}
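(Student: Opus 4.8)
The plan is to follow the classical Brezzi saddle-point theory, exploiting that here $A$ is coercive on the whole space $X$ (and not merely on the kernel of $B$), which streamlines the argument. I introduce the bounded operators $\mathcal{A}\colon X\to X'$, $\mathcal{B}\colon X\to M'$ and the transpose $\mathcal{B}^*\colon M\to X'$ defined by $\langle\mathcal{A}w,v\rangle=A(w,v)$, $\langle\mathcal{B}w,q\rangle=B(w,q)$ and $\langle\mathcal{B}^*\pi,v\rangle=B(v,\pi)$, and I denote by $V=\ker\mathcal{B}=\{w\in X:B(w,q)=0\ \forall q\in M\}$ the kernel, a closed subspace of $X$. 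The abstract problem then reads: find $(w,\pi)$ with $\mathcal{A}w+\mathcal{B}^*\pi=\gamma$ in $X'$ and $\mathcal{B}w=\delta$ in $M'$.

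The key preliminary step is to reformulate the inf-sup hypothesis in operator terms. The assumed condition is equivalent to the lower bound $\|\mathcal{B}^*\pi\|_{X'}\geq\beta\|\pi\|_M$ for all $\pi\in M$, which shows that $\mathcal{B}^*$ is injective with closed range. By the Banach closed range theorem, this range coincides with the annihilator $V^\circ=\{\ell\in X':\langle\ell,v\rangle=0\ \forall v\in V\}$; hence $\mathcal{B}^*$ is an isomorphism from $M$ onto $V^\circ$ with $\|(\mathcal{B}^*)^{-1}\|\leq 1/\beta$. Dually, $\mathcal{B}$ restricted to $V^\perp$ is an isomorphism onto $M'$, so there exists $w_\delta\in V^\perp$ with $\mathcal{B}w_\delta=\delta$ and $\|w_\delta\|_X\leq\tfrac1\beta\|\delta\|_{M'}$. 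I expect this closed-range and annihilator identification to be the main obstacle, since it is the only place where genuine functional analysis (rather than a direct estimate) enters; everything downstream is constructive.

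With this in hand the solution is built in three stages. First, writing $w=w_0+w_\delta$ with the unknown part $w_0\in V$ and testing the first equation against $v\in V$, the coupling term $B(v,\pi)$ drops out, reducing the problem to finding $w_0\in V$ such that $A(w_0,v)=\langle\gamma,v\rangle-A(w_\delta,v)$ for all $v\in V$; since $V$ is a Hilbert space and $A$ is continuous and coercive on it, the Lax–Milgram theorem produces a unique $w_0$, so that $w=w_0+w_\delta$ satisfies $\mathcal{B}w=\delta$. Second, I recover the multiplier: the functional $\ell(v):=\langle\gamma,v\rangle-A(w,v)$ lies in $X'$ and, by construction of $w_0$, vanishes on $V$, hence $\ell\in V^\circ$; the isomorphism $\mathcal{B}^*\colon M\to V^\circ$ then yields a unique $\pi\in M$ with $B(v,\pi)=\ell(v)$ for all $v\in X$, which is precisely the first equation.

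Finally, uniqueness follows by linearity: if $\gamma=0$ and $\delta=0$, then $w\in V$ and testing against $w$ gives $A(w,w)=0$, so $w=0$ by coercivity, and $\mathcal{B}^*\pi=0$ then forces $\pi=0$ by injectivity. For the stability bound~\eqref{est-SPP}, I chain the estimates $\|w_\delta\|_X\leq\tfrac1\beta\|\delta\|_{M'}$, then $\|w_0\|_X\leq\tfrac1\alpha(\|\gamma\|_{X'}+\|A\|\,\|w_\delta\|_X)$ from coercivity, and finally $\|\pi\|_M\leq\tfrac1\beta\|\ell\|_{X'}\leq\tfrac1\beta(\|\gamma\|_{X'}+\|A\|\,\|w\|_X)$; collecting these gives the claimed inequality with a constant $C$ depending only on $\alpha$, $\beta$ and the continuity norms of $A$ and $B$.
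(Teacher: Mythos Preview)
Your proof is correct and follows the standard Brezzi construction (lift the constraint, apply Lax--Milgram on the kernel, recover the multiplier via the closed-range isomorphism $\mathcal{B}^*\colon M\to V^\circ$, then chain the estimates). Note, however, that the paper does not actually prove this lemma: it is stated as a classical result with a reference to \cite[section~2.4.1]{Ern}, so there is no proof in the paper to compare against. What you have written is essentially the argument one finds in that reference or in Brezzi's original work, and it is entirely adequate here.
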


We are now in position to prove a convergence result for the velocity and the pressure.

\begin{proof}[Proof of Proposition~\ref{prop-cvnaive}]
Let $(\bv_h^\pm,q_h^\pm,\bmu_h^\pm) \in \VV^\pm \times \Q_h^\pm \times \WW$ be such that $(\bv^+_h, \bv^-_h) \in \mathbb{V}^0_h$. The difference of the first lines of systems~\eqref{syscont} and~\eqref{sysdisc} gives the equality
\begin{eqnarray*}
a_0^\pm(\bu^\pm_h-\bv^\pm_h,\bw^\pm_h) + b_0^\pm(\bw^\pm_h,p^\pm_h-q^\pm_h) 
+ c_0(\bw^\pm_h, \blambda^\pm_h)  =  
a_0^\pm(\bu^\pm-\bv^\pm_h,\bw^\pm_h)+b_0^\pm(\bw^\pm_h,p^\pm-q^\pm_h)& & \\
 + c_0(\bw^\pm_h, \blambda^\pm), & &
 \forall \bw^\pm_h \in \VV_h^\pm,
\end{eqnarray*}
and so in particular for all $(\bw^+,\bw^-)\in \mathbb{V}^0_h$. Summing the two equalities above, with respect to the symbols $+$ and $-$, gives
\begin{eqnarray*}
& & a_0^+(\bu^+_h-\bv^+_h,\bw^+_h) + a_0^-(\bu^-_h-\bv^-_h,\bw^-_h)
+ b_0^+(\bw^+_h,p^+_h-q^+_h) + b_0^-(\bw^-_h,p^-_h-q^-_h) \\
& &  = a_0^+(\bu^+-\bv^+_h,\bw^+_h)+a_0^-(\bu^--\bv^-_h,\bw^-_h)
+b_0^+(\bw^+_h,p^+-q^+_h)+b_0^-(\bw^-_h,p^--q^-_h) \\
& & \quad + c_0(\bw^+_h, \blambda^+) + c_0(\bw^-_h, \blambda^-)
 - c_0(\bw^+_h, \blambda^+_h) - c_0(\bw^-_h, \blambda^-_h).
\end{eqnarray*}
On the other hand, the difference of the respective fourth equations of systems~\eqref{syscont} and~\eqref{sysdisc} implies
\begin{eqnarray*}
c_0(\bvarphi_h,\blambda_h^+ + \blambda_h^-) = c_0(\bvarphi_h,\blambda^+ + \blambda^-), & &
\forall \bvarphi_h \in \ZZ_h,
\end{eqnarray*}
We can then simplify
\begin{eqnarray*}
c_0(\bw^+_h, \blambda^+) + c_0(\bw^-_h, \blambda^-)
- c_0(\bw^+_h, \blambda^+_h) - c_0(\bw^-_h, \blambda^-_h) & = & 
c_0(\bw_h^- - \bw^+_h,\blambda^-) \\
& = & c_0(\bw_h^- - \bw^+_h,\blambda^-- \bmu_h^-),
\end{eqnarray*}
where we used $c_0(\bw_h^- - \bw^+_h,\bmu_h^-) = 0$, because $(\bw_h^+,\bw_h^-) \in \mathbb{V}^0_h$.
Note that $(\bu^+_h,\bu^-_h)$ lies in $\mathbb{V}^0_h$, and so\newline $(\bu^+_h-\bv_h^+,\bu^-_h-\bv^-_h)$ too. Now consider the following problem:
\begin{eqnarray*}
	& & \text{Find $((\bx^+_h,\bx^-_h),(m^+_h,m_h^-)) \in \mathbb{V}^0_h \times \Q^+_h \times \Q^-_h$} \text{ such that } \\
	& & \left\{ \begin{array} {rcl}
	a_0^+(\bx^+_h,\bw^+_h) + a_0^-(\bx^-_h,\bw^-_h) 
	+ b_0^+(\bw^+_h, m^{+}_h )  + b_0^-(\bw^-_h, m^{-}_h )
	=  \gamma^\pm & & \forall (\bw_h^+,\bw^-_h) \in \mathbb{V}_h^0 ,\\
	b_0^+(\bx_h^+ , r_h^+) + b_0^-(\bx_h^- , r_h^-)
	=  \delta^\pm & & \forall (r_h^+,r_h^-) \in \Q_h^+ \times \Q^-_h.
	\end{array} \right.
\end{eqnarray*}
From estimate~\eqref{est-SPP} of Lemma~\ref{lemma-SPP}, used with $X = \mathbb{V}^0_h$, $M = \Q_h^+ \times \Q_h^-$,
\begin{eqnarray*}
	& & A((\bu_h^+,\bu_h^-),(\bv_h^+,\bv_h^-))  =  a^+_0(\bu_h^+,\bv_h^+) + a_0^-(\bu_h^-,\bv_h^-), \\ 
	& & B((\bu_h^+,\bu_h^-),(q_h^+,q_h^-))  =  b_0^+(\bu_h^+,q_h^+) +b_0^-(\bu_h^-,q_h^-),
\end{eqnarray*}
and $\gamma = \gamma^+ +\gamma^-$, $\delta = \delta^+ + \delta^-$, with
\begin{eqnarray*}
	 \gamma^\pm  =  a_0^\pm(\bu^\pm-\bv^\pm_h,\bw^\pm_h)
	+b_0^\pm(\bw^\pm_h,p^\pm-q^\pm_h)
	\mp c_0(\bw^\pm_h,\blambda^--\bmu^-_h),& & 
	 \delta^\pm = b_0^\pm(\bu^\pm - \bv_h^\pm, r_h^\pm),
\end{eqnarray*}
it yields that
\begin{eqnarray*}
& & \| \bu_h^+-\bv^+_h \|_{\VV^+} + \| \bu^-_h-\bv^-_h \|_{\VV^-}
+ \| p_h^+-q^+_h \|_{\VV^+} + \| p^-_h-q^-_h \|_{\VV^-} \\
& & \leq \| \bu^+-\bv^+_h \|_{\VV^+} + \| \bu^--\bv^-_h \|_{\VV^-}
+ \| p^+-q^+_h \|_{\VV^+} + \| p^--q^-_h \|_{\VV^-}
+  \| \blambda^- - \bmu^-_h\|_{\WW}.
\end{eqnarray*}
Combine this estimate with the triangle inequality, we obtain
\begin{eqnarray*}
& & \| \bu^+-\bu^+_h \|_{\VV^+} + \| \bu^--\bu^-_h \|_{\VV^-}
+ \| p^+-p^+_h \|_{\VV^+} + \| p^- - p^-_h \|_{\VV^-} \\
& & \leq \| \bu^+-\bv^+_h \|_{\VV^+} + \| \bu^--\bv^-_h \|_{\VV^-}
+ \| p^+-q^+_h \|_{\VV^+} + \| p^--q^-_h \|_{\VV^-} 
+ \| \blambda^- - \bmu^-_h\|_{\WW},
\end{eqnarray*}
which is valid for any $(\bv_h^\pm,q_h^\pm,\bmu_h^\pm) \in \VV^\pm \times \Q_h^\pm \times \WW$ such that $(\bv^+_h,\bv^-_h) \in \mathbb{V}_h^0$. In view of the symmetry of this estimate, the term $\| \blambda^- - \bmu^-_h\|_{\WW}$ can be replaced by $\min\left(\| \blambda^+ - \bmu^+_h\|_{\WW};\| \blambda^- - \bmu^-_h\|_{\WW}\right)$, and thus we obtain the announced estimate.
\end{proof}

\subsection{Proof of Theorem~\ref{th-infsup}} \label{App-A2}

We will need an estimate for the $L^2$-orthogonal projection from $\mathbf{H}^{1/2}(\Gamma)$ to $\mathbf{W}_h$. The following result is proved in~\cite[Lemma~3,~page~85]{Court2014}:
\begin{lemma} \label{lemma-projector}
	Denote by $P_h$ the $L^2$-orthogonal projector from $\mathbf{H}^{1/2}(\Gamma)$ to $\mathbf{W}_h$. Then the following estimate
	\begin{eqnarray}
	\| P_h \bv - \bv \|_{\mathbf{L}^2(\Gamma)} \leq C h^{1/2}\|\bv\|_{\mathbf{H}^{1/2}(\Gamma)}
	\label{estproj}
	\end{eqnarray}
	holds for all $\bv \in \mathbf{H}^{1/2}(\Gamma)$.
\end{lemma}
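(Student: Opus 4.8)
The plan is to combine the best-approximation property of the $L^2$-projection with an operator interpolation argument between the endpoints $\LL^2(\Gamma)$ and $\HH^1(\Gamma)$. Since $P_h$ is the $\LL^2(\Gamma)$-orthogonal projection onto $\WW_h$, for every $\bv$ one has the best-approximation identity $\|\bv - P_h\bv\|_{\LL^2(\Gamma)} = \inf_{\bw_h \in \WW_h}\|\bv - \bw_h\|_{\LL^2(\Gamma)}$, and in particular $\|I - P_h\|_{\LL^2(\Gamma)\to\LL^2(\Gamma)} \le 1$. This supplies the first endpoint bound for free, with scaling $h^0$. It then suffices to establish the second endpoint bound $\|\bv - P_h\bv\|_{\LL^2(\Gamma)} \le Ch\,\|\bv\|_{\HH^1(\Gamma)}$ for $\bv \in \HH^1(\Gamma)$, after which real interpolation will yield the claim on the intermediate space $\HH^{1/2}(\Gamma)$.

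For the $\HH^1(\Gamma)$ endpoint I would exhibit an explicit competitor in $\WW_h$ and invoke best approximation. Given $\bv \in \HH^1(\Gamma)$, extend it to a neighborhood of $\Gamma$ into a function $\tilde{\bv} \in \HH^{3/2}(\Omega)$ with $\|\tilde{\bv}\|_{\HH^{3/2}(\Omega)} \le C\|\bv\|_{\HH^1(\Gamma)}$, using the trace/extension theorem for the smooth closed manifold $\Gamma$. Let $\tilde{\bw}_h \in \tilde{\WW}_h$ be a quasi-interpolant (Scott--Zhang) of $\tilde{\bv}$ on the bulk Cartesian mesh, and set $\bw_h := \tilde{\bw}_h|_\Gamma \in \WW_h$. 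On each cut cell $T$ meeting $\Gamma$, the cut-cell trace inequality $\|\bvarphi\|_{\LL^2(\Gamma \cap T)} \le C\big(h^{-1/2}\|\bvarphi\|_{\LL^2(T)} + h^{1/2}\|\nabla\bvarphi\|_{\LL^2(T)}\big)$ together with the local estimates $\|\tilde{\bv}-\tilde{\bw}_h\|_{\LL^2(T)} \le Ch^{3/2}|\tilde{\bv}|_{\HH^{3/2}(\omega_T)}$ and $\|\nabla(\tilde{\bv}-\tilde{\bw}_h)\|_{\LL^2(T)} \le Ch^{1/2}|\tilde{\bv}|_{\HH^{3/2}(\omega_T)}$ gives $\|\bv - \bw_h\|_{\LL^2(\Gamma \cap T)} \le Ch\,|\tilde{\bv}|_{\HH^{3/2}(\omega_T)}$. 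Summing over the cut cells (finite patch overlap) and using stability of the extension produces $\|\bv - \bw_h\|_{\LL^2(\Gamma)} \le Ch\,\|\bv\|_{\HH^1(\Gamma)}$, hence the endpoint bound.

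Finally, viewing $I - P_h$ as a linear map bounded $\LL^2(\Gamma) \to \LL^2(\Gamma)$ with norm $\le 1$ and bounded $\HH^1(\Gamma) \to \LL^2(\Gamma)$ with norm $\le Ch$, the real-interpolation (K-method) theorem gives, for $\theta = 1/2$,
\[
\|I - P_h\|_{[\LL^2(\Gamma),\HH^1(\Gamma)]_{1/2,2}\to \LL^2(\Gamma)}
\le \|I-P_h\|_{\LL^2(\Gamma)\to\LL^2(\Gamma)}^{1/2}\,\|I-P_h\|_{\HH^1(\Gamma)\to\LL^2(\Gamma)}^{1/2}
\le Ch^{1/2}.
\]
Since $\Gamma$ is a smooth closed manifold, $[\LL^2(\Gamma),\HH^1(\Gamma)]_{1/2,2} = \HH^{1/2}(\Gamma)$ with equivalent norms, which is exactly estimate~\eqref{estproj}.

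The main obstacle I expect is the $\HH^1(\Gamma)$ endpoint rather than the interpolation step: because $\WW_h$ is \emph{not} an intrinsic finite element space on $\Gamma$ but the trace of an unfitted bulk space, no standard surface interpolation estimate applies, and one must pass through an ambient extension and the cut-cell trace inequality. The delicate point is that the trace inequality costs a factor $h^{-1/2}$ which has to be recovered exactly by the extra half power of $h$ coming from the $\HH^{3/2}$-regularity of the extension; this balance is precisely what forces the use of $\HH^{3/2}(\Omega)$ (and not merely $\HH^1(\Omega)$) as the extension target, and it is where the assumed smoothness of $\Gamma$ enters.
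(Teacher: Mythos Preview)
The paper does not prove this lemma; it merely records that the result is established in \cite[Lemma~3, page~85]{Court2014} and then uses it. There is therefore no in-paper argument to compare your proposal against.

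Your proposal is nonetheless a correct and self-contained proof. The three ingredients --- the trivial $\LL^2(\Gamma)\!\to\!\LL^2(\Gamma)$ bound for $I-P_h$, an $O(h)$ endpoint bound $\HH^1(\Gamma)\!\to\!\LL^2(\Gamma)$ obtained by lifting to $\HH^{3/2}(\Omega)$, applying a Scott--Zhang quasi-interpolant in $\tilde{\WW}_h$, and pushing back to $\Gamma$ through the cut-cell trace inequality, and finally real interpolation $[\LL^2(\Gamma),\HH^1(\Gamma)]_{1/2,2}=\HH^{1/2}(\Gamma)$ --- fit together as you describe. Two technical points deserve a word of care but do not threaten the argument: (i) the Scott--Zhang local estimates at fractional regularity $\HH^{3/2}$ are standard but should be quoted in a form that allows summation of $|\tilde{\bv}|^2_{\HH^{3/2}(\omega_T)}$ over overlapping patches (the Slobodeckij seminorm is nonlocal, so the finite-overlap bound needs the usual localization lemma); and (ii) your extension step tacitly uses that $\Gamma$ is a smooth closed hypersurface, which the paper assumes. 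Your diagnosis that the $\HH^1(\Gamma)$ endpoint is the genuine work, and that the $h^{-1/2}$ loss in the cut-cell trace inequality is exactly compensated by the extra half-power from $\HH^{3/2}$ bulk regularity, is on the mark.
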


Before starting the proof of Theorem~\ref{th-infsup}, let us modify slightly the functional framework. First, since any function $\bv^+ \in \VV^+$ satisfies $\bv^+_{\mid \p \Omega} \equiv 0$, the operator $\varepsilon$ is injective on $\VV^+$ (see~\cite[page~18]{Temam} for a characterization of its kernel), and then the Petree-Tartar lemma combined with the Korn's equality yields $\| \bv^+\|_{\HH^1(\Omega^+)} \leq C\| \varepsilon(\bv^+) \|_{[\L^2(\Omega^+)]^{d\times d}}$ for all $\bv^+ \in \VV^+$, and thus we can redefine
\begin{eqnarray*}
\| \bv^+ \|_{\VV^+}  :=  \| \varepsilon(\bv^+) \|_{[\L^2(\Omega^+)]^{d\times d}}, 
& &  \forall \bv^+ \in \VV^+.
\end{eqnarray*}
Secondly, we modify the space $\VV^-$ as $\VV^- : =  \left\{ \bv \in \HH^1(\Omega^-) \mid \ \int_{\Gamma_D} \bv \, \d \Gamma_D =0\right\}$, where $\Gamma_D \subset \Gamma$ is of positive $(d-1)$-dimensional Lebesgue measure (like in~\cite{Renard2009}). Thus, in the same fashion as for $\VV^+$ previously, we can redefine
\begin{eqnarray*}
\| \bv^- \|_{\VV^-}  :=  \| \varepsilon(\bv^-) \|_{[\L^2(\Omega^-)]^{d\times d}}, 
& &   \forall \bv^- \in \VV^-.
\end{eqnarray*}
These modifications could be avoided for realizing the following proof, but they simplify the latter. 

\begin{proof}[Proof of Theorem~\ref{th-infsup}]
The technique of the proof is inspired from~~\cite[Lemma~6,~page~144]{Stenberg}. Let $\mathfrak{u}_h = (\bu_h^+,p_h^+,\blambda_h^+,\bu_h^-,p_h^-,\blambda_h^-,\Phi_h) \in \mathfrak{V}_h$.\\
\noindent{\bf Step 1.} 
Choose $\mathfrak{v}_h^{(1)} := (\bu_h^+,-p_h^+,-\blambda_h^+,\bu_h^-,-p_h^-,-\blambda_h^-,\Phi_h)$. Then
\begin{eqnarray}
\mathcal{M}(\mathfrak{u}_h,\mathfrak{v}_h^{(1)}) & = & 2\nu^+ \| \varepsilon(\bu_h^+) \|_{[\L^2(\Omega^+)]^{d\times d}}^2 +  2\nu^- \| \varepsilon(\bu_h^-) \|_{[\L^2(\Omega^-)]^{d\times d}}^2\nonumber\\
& & +\alpha_0  \| \bu_h^+ - \Phi_h\|^2_{\mathbf{L}^2(\Gamma)} 
+\alpha_0  \| \bu_h^- - \Phi_h\|^2_{\mathbf{L}^2(\Gamma)} \nonumber\\
& & + \gamma_0h\| \blambda_h^+ + p_h^+\bn^+\|_{\mathbf{L}^2(\Gamma)}^2
+ \gamma_0h\| \blambda_h^- + p_h^-\bn^-\|_{\mathbf{L}^2(\Gamma)}^2\nonumber\\
& & -4{\nu^+}^2\gamma_0h\|\varepsilon(\bu^+_h)\bn^+\|^2_{\mathbf{L}^2(\Gamma)}
-4{\nu^-}^2\gamma_0h\|\varepsilon(\bu^-_h)\bn^-\|^2_{\mathbf{L}^2(\Gamma)} \nonumber\\
& \geq & 2\nu^+ \| \bu_h^+ \|_{\VV^+}^2 +  2\nu^- \| \bu_h^- \|_{\VV^-}^2\nonumber\\
& & +2\alpha_0  \| \Phi_h\|^2_{\mathbf{L}^2(\Gamma)} 
- C\alpha_0\left(\| \bu_h^+\|_{\VV^+} + \| \bu_h^-\|_{\VV^-}\right) \nonumber\\
& & + \gamma_0h\| \blambda_h^+ + p_h^+\bn^+\|_{\mathbf{L}^2(\Gamma)}^2
+ \gamma_0h\| \blambda_h^- + p_h^-\bn^-\|_{\mathbf{L}^2(\Gamma)}^2\nonumber\\
& & -C\gamma_0\|\bu^+_h\|^2_{\mathbf{V}^+_h}
-C\gamma_0\|\bu^-_h\|^2_{\mathbf{V}^-_h}
, \label{eststep1}
\end{eqnarray}
where we used assumption~$(\mathbf{A2})$.
\hfill \\
{\bf Step 2.} From assumption~$(\mathbf{A1})$, there exists ${\bv_{p}}^{\pm}_h \in \mathbf{V}_0^{\pm}$ such that 
\begin{eqnarray}
-\int p^{\pm}_h\divg {\bv_p}^{\pm}_h \, \d \Omega^{\pm} = \| p_h^{\pm} \|^2_{\L^2(\Omega^{\pm})} 
& \text{ and } &
\| {\bv_p}^{\pm}_h\|_{\mathbf{V}^{\pm}} \leq C\| p_h\|_{\L^2(\Omega^{\pm})}.\label{estvp}
\end{eqnarray}
Choose $\mathfrak{v}_h^{(2)} = ({\bv_p}_h^{+},0,0,{\bv_p}_h^{-},0,0,0)$. Then, for some $\alpha >0$, we estimate
\begin{eqnarray*}
	\mathcal{M}(\mathfrak{u}_h,\mathfrak{v}_h^{(2)}) & = & 2\nu^+ \int_{\Omega^+}\varepsilon(\bu_h^+):\varepsilon({\bv_p}^{+}_h)\, \d \Omega^+ + 
	2\nu^- \int_{\Omega^-}\varepsilon(\bu_h^-):\varepsilon({\bv_p}^{-}_h)\, \d \Omega^- + 
	\| p_h^+\|^2_{\L^2(\Omega^+)} +  \| p_h^-\|^2_{\L^2(\Omega^-)} \\
	& & -2\nu^+\gamma_0h\int_{\Gamma}\varepsilon(\bv_{p_h}^+) \cdot (2\nu^+ \varepsilon(\bu^+)\bn^+ - p^+\bn^+ - \blambda^+) \, \d \Gamma \\
	& & -2\nu^-\gamma_0h\int_{\Gamma}\varepsilon(\bv_{p_h}^-) \cdot (2\nu^- \varepsilon(\bu^-)\bn^- - p^-\bn^- - \blambda^-) \, \d \Gamma\\
	& \geq & \| p_h^+\|^2_{\L^2(\Omega^+)} +  \| p_h^-\|^2_{\L^2(\Omega^-)} - \alpha \nu^+ \| \varepsilon(\bu_h^+) \|_{\mathbf{L}^2(\Omega^+)}^2 -  \alpha \nu^- \| \varepsilon(\bu_h^-) \|_{\mathbf{L}^2(\Omega^-)}^2 \\
	& &  - \frac{\nu^+}{\alpha}\| \varepsilon({\bv_p}^{+}_h) \|_{\mathbf{L}^2(\Omega^+)}^2 - \frac{\nu^-}{\alpha}\| \varepsilon({\bv_p}^{-}_h) \|_{\mathbf{L}^2(\Omega^-)}^2\\
	& & -C\gamma_0 h \left(\frac{{\nu^+}^2}{\alpha}\| \varepsilon({\bv_p}^{+}_h) \|_{\mathbf{L}^2(\Gamma)}^2 + \frac{{\nu^-}^2}{\alpha}\| \varepsilon({\bv_p}^{-}_h) \|_{\mathbf{L}^2(\Gamma)}^2 + 
	\alpha {\nu^+}^2\| \varepsilon({\bu}^{+}_h) \|_{\mathbf{L}^2(\Gamma)}^2
	+ \alpha {\nu^-}^2\| \varepsilon({\bu}^{-}_h) \|_{\mathbf{L}^2(\Gamma)}^2\right) \\
	& & -C\gamma_0 h\left( \| \blambda_h^+ + p_h^+\bn^+\|_{\mathbf{L}^2(\Gamma)}^2 +
	\| \blambda_h^- + p_h^-\bn^-\|_{\mathbf{L}^2(\Gamma)}^2 
	\right) \\
	& \geq & \| p_h^+\|^2_{\L^2(\Omega^+)} +  \| p_h^-\|^2_{\L^2(\Omega^-)} - \alpha \nu^+ \| \bu_h^+ \|_{\VV^+}^2 -  \alpha \nu^- \| \bu_h^- \|_{\VV^-}^2 \\
	& &  - \frac{C\gamma_0}{\alpha}\| \bv^{+}_{p_h} \|_{\mathbf{V}^+}^2 - \frac{C}{\alpha}\| \bv^{-}_{p_h} \|_{\mathbf{V}^-}^2 
	- C\alpha \|\bu^+_h\|^2_{\mathbf{V}^+} - C\alpha \|\bu^-_h\|^2_{\mathbf{V}^-}\\
	& & -C\gamma_0 h\left( \| \blambda_h^+ + p_h^+\bn^+\|_{\mathbf{L}^2(\Gamma)}^2 +
	\| \blambda_h^- + p_h^-\bn^-\|_{\mathbf{L}^2(\Gamma)}^2 
	\right).
\end{eqnarray*}
where we used the Cauchy-Schwarz inequality combined with the Young's inequality, and also assumption~$(\mathbf{A2})$ applied for $\bu_h^\pm$ and $\bv_{p_h}^\pm$. The generic constant $C$ does not depend on $h$, and is non-decreasing with respect to $\gamma_0$. Furthermore, using~\eqref{estvp} we obtain
\begin{eqnarray}
\mathcal{M}(\mathfrak{u},\mathfrak{v}^{(2)}_h) & \geq & \left(1 - \frac{C\gamma_0}{\alpha} \right)\left( \| p_h^+\|^2_{\L^2(\Omega^+)} +  \| p_h^-\|^2_{\L^2(\Omega^-)} \right) 
- \alpha C\gamma_0 \left(\| \bu_h^+ \|_{\VV^+}^2 + \| \bu_h^- \|_{\VV^-}^2 \right) \nonumber \\
& & -C\gamma_0 h\left( \| \blambda_h^+ + p_h^+\bn^+\|_{\mathbf{L}^2(\Gamma)}^2 +
\| \blambda_h^- + p_h^-\bn^-\|_{\mathbf{L}^2(\Gamma)}^2 
\right). \label{eststep2}
\end{eqnarray}
\hfill \\
{\bf Step 3.} 
Choose $\mathfrak{v}_h^{(3)} := (0, 0, \bmu_h^+, 0, 0, \bmu_h^-, 0)$ with $\bmu_h^{\pm} = -\frac{1}{h}\mathcal{P}_h(\bu_h^{\pm}-\Phi_h)$. The operator $\mathcal{P}_h$ is introduced in Lemma~\ref{lemma-projector}. Then
\begin{eqnarray*}
\mathcal{M}(\mathfrak{u}_h,\mathfrak{v}^{(3)}_h) & = & \frac{1}{h}\|\mathcal{P}_h(\bu_h^{+}-\Phi_h)\|^2_{\mathbf{L}^2(\Gamma)} 
+ \frac{1}{h}\|\mathcal{P}_h(\bu_h^{-}-\Phi_h)\|^2_{\mathbf{L}^2(\Gamma)}\\
& & - \gamma_0 \int_{\Gamma} (\sigma(\bu^+_h,p^+_h)\bn^+ -\blambda^+_h ) \cdot 
\mathcal{P}_h(\bu^+_h-\Phi_h)\, \d \Gamma
- \gamma_0 \int_{\Gamma} (\sigma(\bu^-_h,p^-_h)\bn^- -\blambda^-_h ) \cdot 
\mathcal{P}_h(\bu^-_h-\Phi_h)\, \d \Gamma \\
& \geq & \frac{1}{h}\|\bu_h^{+}-\Phi_h\|^2_{\mathbf{L}^2(\Gamma)}
+ \frac{1}{h}\|\bu_h^{-}-\Phi_h\|^2_{\mathbf{L}^2(\Gamma)} \\
& & - \frac{1}{h}\|\mathcal{P}_h(\bu_h^{+}-\Phi_h) - (\bu_h^+-\Phi_h)\|^2_{\mathbf{L}^2(\Gamma)}
- \frac{1}{h}\|\mathcal{P}_h(\bu_h^{-}-\Phi_h) - (\bu_h^--\Phi_h)\|^2_{\mathbf{L}^2(\Gamma)}\\
& & - \gamma_0 C \left(\|\varepsilon(\bu^+_h)\bn^+ \|_{\mathbf{L}^2(\Gamma)} 
+ \|\blambda^+_h + p^+_h\bn^+\|_{\mathbf{L}^2(\Gamma)} \right)
\|\mathcal{P}_h(\bu_h-\Phi_h)\|_{\mathbf{L}^2(\Gamma)}\\
& & - \gamma_0 C \left(\|\varepsilon(\bu^-_h)\bn^- \|_{\mathbf{L}^2(\Gamma)}
+ \|\blambda^-_h + p^-_h\bn^-\|_{\mathbf{L}^2(\Gamma)} \right)
\|\mathcal{P}_h(\bu^-_h-\Phi_h)\|_{\mathbf{L}^2(\Gamma)}.
\end{eqnarray*}
From~\eqref{estproj} and assumption~$(\mathbf{A2})$, this yields
\begin{eqnarray}
\mathcal{M}(\mathfrak{u}_h,\mathfrak{v}^{(3)}_h) & \geq &
\frac{1}{h}\|\bu_h^{+}-\Phi_h\|^2_{\mathbf{L}^2(\Gamma)}
+ \frac{1}{h}\|\bu_h^{-}-\Phi_h\|^2_{\mathbf{L}^2(\Gamma)}  -C\left(\|\bu_h^+-\Phi_h\|^2_{\mathbf{H}^{1/2}(\Gamma)} + \|\bu_h^--\Phi_h\|^2_{\mathbf{H}^{1/2}(\Gamma)} \right) \nonumber \\
& & - \gamma_0 C\|\bu^+_h \|_{\mathbf{V}^+} \|\bu^+_h-\Phi_h\|_{\mathbf{H}^{1/2}(\Gamma)}
- \gamma_0 C h^{1/2} \|\blambda^+_h + p^+_h\bn^+\|_{\mathbf{L}^2(\Gamma)} 
\|\bu^+_h-\Phi_h\|_{\mathbf{H}^{1/2}(\Gamma)} \nonumber \\
& & - \gamma_0 C\|\bu^-_h \|_{\mathbf{V}^-} \|\bu^-_h-\Phi_h\|_{\mathbf{H}^{1/2}(\Gamma)}
- \gamma_0 C h^{1/2} \|\blambda^-_h + p^-_h\bn^-\|_{\mathbf{L}^2(\Gamma)} 
\|\bu^-_h-\Phi_h\|_{\mathbf{H}^{1/2}(\Gamma)} \nonumber \\
& \geq &
\frac{1}{h}\|\bu_h^{+}-\Phi\|^2_{\mathbf{L}^2(\Gamma)}
+ \frac{1}{h}\|\bu_h^{-}-\Phi\|^2_{\mathbf{L}^2(\Gamma)}  -C\left(\|\bu_h^+\|_{\mathbf{V}^+} + \|\bu_h^+\|_{\mathbf{V}^-} 
+ \|\Phi_h\|^2_{\mathbf{Z}}  \right) \nonumber \\
& & - \gamma_0 C h\left(\|\blambda^+_h + p^+_h\bn^+\|^2_{\mathbf{L}^2(\Gamma)} +
\|\blambda^-_h + p^-_h\bn^-\|^2_{\mathbf{L}^2(\Gamma)}  \right) \label{eststep3}
\end{eqnarray}
for $\gamma_0$ small enough.

\hfill \\
{\bf Step 4.} Choose $\mathfrak{v}_h = \mathfrak{v}^{(1)}_h + \kappa_2 \mathfrak{v}_h^{(2)} + \kappa_3\mathfrak{v}_h^{(3)} $, for some positive constants $\kappa_2$ and $\kappa_3$. Then, gathering the estimates~\eqref{eststep1},~\eqref{eststep2} and~\eqref{eststep3} yields
\begin{eqnarray}
	\mathcal{M}(\mathfrak{u}_h,\mathfrak{v}_h) & \geq & 
	\left(2\underline{\nu} - C\alpha_0 - C\gamma_0 - \alpha \overline{\nu} \kappa_2 - \beta \overline{\nu} \kappa_3  \right)
	\left( \| \bu^+ \|_{\VV^+}^2 + \| \bu^- \|_{\VV^-}^2 \right) \nonumber \\
	& & +\kappa_2\left(1-\frac{C\gamma_0}{\alpha}\right)\left( \| p_h^+\|^2_{\L^2(\Omega^+)} +  \| p_h^-\|^2_{\L^2(\Omega^-)} \right) + \frac{\kappa_3}{h}\|\bu_h^{+}-\Phi_h\|^2_{\mathbf{L}^2(\Gamma)}
	+ \frac{\kappa_3}{h}\|\bu_h^{-}-\Phi_h\|^2_{\mathbf{L}^2(\Gamma)} \nonumber \\
	& & + (2\alpha_0 - C\kappa_3)\|\Phi_h\|^2_{\mathbf{L}^2(\Gamma)} +
	\gamma_0 h(1-C\kappa_2 - C\kappa_3)\left(\|\blambda^+_h + p^+_h\bn^+\|^2_{\mathbf{L}^2(\Gamma)} +
	\|\blambda^-_h + p^-_h\bn^-\|^2_{\mathbf{L}^2(\Gamma)}  \right)\nonumber \\
	& \geq & \left(2\underline{\nu} - C\alpha -C\gamma_0 - \alpha \overline{\nu} \kappa_2 - \beta \overline{\nu} \kappa_3  \right)\left( \| \bu_h^+ \|_{\VV^+}^2 +   \| \bu_h^- \|_{\VV^-}^2 \right) \nonumber \\
	& & +\frac{\kappa_2}{2}\left(1-\frac{C}{\alpha}\right)\left( \| p_h^+\|^2_{\L^2(\Omega^+)} +  \| p_h^-\|^2_{\L^2(\Omega^-)} \right) + \frac{\kappa_3}{h}\|\bu_h^{+}-\Phi_h\|^2_{\mathbf{L}^2(\Gamma)}
	+ \frac{\kappa_3}{h}\|\bu_h^{-}-\Phi_h\|^2_{\mathbf{L}^2(\Gamma)}\nonumber \\
	& & + (2\alpha_0-C\kappa_3)\|\Phi_h\|^2_{\mathbf{L}^2(\Gamma)} +
	\gamma_0 h(1-C\kappa_2 - C\kappa_3)\left(\|\blambda^+_h + p^+_h\bn^+\|^2_{\mathbf{L}^2(\Gamma)} +
	\|\blambda^-_h + p^-_h\bn^-\|^2_{\mathbf{L}^2(\Gamma)}  \right) \nonumber \\
	& & +\frac{\kappa_2}{2}\left(1-\frac{C}{\alpha}\right)\left( \| p_h^+\|^2_{\L^2(\Omega^+)} +  \| p_h^-\|^2_{\L^2(\Omega^-)} \right), \label{est-step5}
\end{eqnarray}
where we denote $\overline{\nu} = \max(\nu^+,\nu^-)$ and $\underline{\nu} = \min(\nu^+,\nu^-)$. For splitting the terms involving $\blambda_h^{\pm}$ and $p_h^\pm$, we estimate the two last quantities in the right-hand-side of the estimate above. First we estimate
\begin{eqnarray*}
\|\blambda^+_h + p^+_h\bn^+\|^2_{\mathbf{L}^2(\Gamma)} & = & 
\| \blambda_h^\pm\|^2_{\mathbf{L}^2(\Gamma)} + \|p_h^\pm\|^2_{\L^2(\Gamma)} +
2\langle p^\pm_h\bn^\pm ; \blambda_h^\pm \rangle_{\mathbf{L}^2(\Gamma)} \\
& \geq & \left(1-\frac{1}{\beta} \right)\| \blambda_h^\pm\|^2_{\mathbf{L}^2(\Gamma)} - 
(\beta-1)\|p_h^\pm\|^2_{\L^2(\Gamma)} ,
\end{eqnarray*}
where we used the Young's inequality with some constant $\beta >0$. Furthermore, using assumption~$(\mathbf{A3})$, we have
\begin{eqnarray*}
& & \frac{\kappa_2}{2}\left(1-\frac{C}{\alpha}\right)\| p_h^\pm\|^2_{\L^2(\Omega^\pm)} + 
\gamma_0 h(1-C\kappa_2 - C\kappa_3)\|\blambda^\pm_h + p^\pm_h\bn^\pm\|^2_{\mathbf{L}^2(\Gamma)} \\
 &  &  \geq \left(\frac{\kappa_2 }{2C}\left(1-\frac{C}{\alpha}\right) -\gamma_0(\beta-1)(1-C\kappa_2 - C\kappa_3) \right)h\| p_h^\pm\|^2_{\L^2(\Gamma)} \\ 
 & & \qquad + \gamma_0 h(1-C\kappa_2 - C\kappa_3)\left(1-\frac{1}{\beta}\right)\| \blambda_h^\pm\|^2_{\mathbf{L}^2(\Gamma)}.
\end{eqnarray*}
Thus, coming back to~\eqref{est-step5}, we obtain
\begin{eqnarray*}
	\mathcal{M}(\mathfrak{u}_h,\mathfrak{v}_h) 
	& \geq & \left(\underline{\nu} - C\alpha_0 - C\gamma_0 -\alpha \overline{\nu} \kappa_2 - \beta \overline{\nu} \kappa_3  \right)\left( \| \bu_h^+ \|_{\VV^+}^2 + \| \bu_h^- \|_{\VV^-}^2 \right) \\
	& & +\frac{\kappa_2}{2}\left(1-\frac{C}{\alpha}\right)\left( \| p_h^+\|^2_{\L^2(\Omega^+)} +  \| p_h^-\|^2_{\L^2(\Omega^-)} \right) \\
	& & \gamma_0 (1-C\kappa_2 - C\kappa_3)\left(1-\frac{1}{\beta}\right)
	\left(h\| \blambda_h^+ \|_{\mathbf{L}^2(\Gamma)}^2 + h\| \blambda_h^- \|_{\mathbf{L}^2(\Gamma)}^2 \right)\\
	& & +\left(\frac{\kappa_2 }{2C}\left(1-\frac{C}{\alpha}\right) -\gamma_0(\beta-1)(1-C\kappa_2 - C\kappa_3) \right)\left(h\| p_h^+\|^2_{\L^2(\Gamma)} + h\| p_h^-\|^2_{\L^2(\Gamma)} \right)\\
	& & + \frac{\kappa_3}{h}\left( \|\bu_h^{+}-\Phi_h\|^2_{\mathbf{L}^2(\Gamma)}
	+\|\bu_h^{-}-\Phi_h\|^2_{\mathbf{L}^2(\Gamma)}\right)
	+ (2\alpha_0-C\kappa_3)\|\Phi_h\|^2_{\mathbf{L}^2(\Gamma)} \\
	& & +	\underline{\nu} \left( \| \bu_h^+ \|_{\VV^+}^2 + \| \bu_h^- \|_{\VV^-}^2 \right) .
\end{eqnarray*}
The last terms in the right-hand-side above enable us to control the missing boundary terms as follows
\begin{eqnarray*}
	\underline{\nu} \left( \| \bu_h^+ \|_{\VV^+}^2 + \| \bu_h^- \|_{\VV^-}^2 \right) & \geq & 
	C\underline{\nu} h\left(\|\varepsilon(\bu^+_h)\bn^\pm\|_{\LL^2(\Gamma)} + \|\varepsilon(\bu^-_h)\bn^\pm\|_{\LL^2(\Gamma)}\right), 
\end{eqnarray*}
where we use assumption~$(\mathbf{A2})$. Then by choosing $\alpha$ and $\beta$ large enough, next $\alpha_0$ small enough, next $\kappa_2$ and $\kappa_3$ small enough, and next $\gamma_0$ small enough, we get
\begin{eqnarray*}
	\mathcal{M}(\mathfrak{u}_h,\mathfrak{v}_h) & \geq & C|||\, \mathfrak{u}_h\, |||^2,
\end{eqnarray*}
for some constant $C>0$. It remains us to verify that the norm of $\mathfrak{v}$ so chosen is controlled by the norm of $\mathfrak{u}$, namely the estimate
\begin{eqnarray*}
	|||\, \mathfrak{v}_h\, ||| & \leq & C |||\, \mathfrak{u}_h \, |||
\end{eqnarray*}
which holds from the estimate of~\eqref{estvp} and the continuity -- with constant equal to $1$ -- of the projection $\mathcal{P}_h$:
\begin{eqnarray*}
h\|\bmu_h^\pm\|_{\LL^2(\Gamma)}^2 \leq \frac{1}{h}\|\mathcal{P}_h(\bu^\pm_h - \Phi_h ) \|^2_{\LL^2(\Gamma)}  \leq \frac{1}{h}\|\bu^\pm_h - \Phi_h  \|^2_{\LL^2(\Gamma)} . 
\end{eqnarray*}
So we obtain
\begin{eqnarray*}
	\frac{\mathcal{M}(\mathfrak{u}_h,\mathfrak{v}_h)}{|||\, \mathfrak{v}_h \, |||} & \geq & 
	C|||\, \mathfrak{u}_h \, |||,
\end{eqnarray*}
and thus the proof is complete.
\end{proof}

\section*{Acknowledgments}
The author gratefully acknowledges support by the Austrian Science Fund (FWF) special research grant SFB-F32 "Mathematical Optimization and Applications in Biomedical Sciences" and the BioTechMed-Graz.\\
Our codes are written in C++ language, using the free finite element library {\sc GETFEM++} \cite{Getfem} developed by Yves Renard and Julien Pommier. The author thanks the {\sc GETFEM++} users community for collaborative efforts.

\nocite*
\bibliographystyle{alpha}
\bibliography{Court_JCAM_final}

\newcommand{\etalchar}[1]{$^{#1}$}
\begin{thebibliography}{DvdHO{\etalchar{+}}14}

\bibitem[ABMT14]{Anjos2014}
G.~R. Anjos, N.~Borhani, N.~Mangiavacchi, and J.~R. Thome.
\newblock A 3{D} moving mesh finite element method for two-phase flows.
\newblock {\em J. Comput. Phys.}, 270:366--377, 2014.

\bibitem[ADKL01]{MUMPS1}
P.~R. Amestoy, I.~S. Duff, J.~Koster, and J.-Y. L'Excellent.
\newblock A fully asynchronous multifrontal solver using distributed dynamic
  scheduling.
\newblock {\em SIAM Journal on Matrix Analysis and Applications}, 23(1):15--41,
  2001.

\bibitem[AG18]{Abels2016}
H.~Abels and H.~Garcke.
\newblock {\em Weak solutions and diffuse interface models for incompressible
  two-phase flows}.
\newblock Handbook of Mathematical Analysis in Mechanics of Viscous Fluid.
  Springer International Publishing, 2018.

\bibitem[AGLP06]{MUMPS2}
P.~R. Amestoy, A.~Guermouche, J.-Y. L'Excellent, and S.~Pralet.
\newblock Hybrid scheduling for the parallel solution of linear systems.
\newblock {\em Parallel Computing}, 32(2):136--156, 2006.

\bibitem[BCCK16]{Court2016}
O.~Bodart, V.~Cayol, S.~Court, and J.~Koko.
\newblock X{FEM}-based fictitious domain method for linear elasticity model
  with crack.
\newblock {\em SIAM J. Sci. Comput.}, 38(2):B219--B246, 2016.

\bibitem[BDH96]{Qhull}
C.~B. Barber, D.~P. Dobkin, and H.~Huhdanpaa.
\newblock The quickhull algorithm for convex hulls.
\newblock {\em ACM Trans. Math. Software}, 22(4):469--483, 1996.

\bibitem[BH91]{Barbosa1}
H.~J.~C. Barbosa and T.~J.~R. Hughes.
\newblock The finite element method with {L}agrange multipliers on the
  boundary: circumventing the {B}abu\v ska-{B}rezzi condition.
\newblock {\em Comput. Methods Appl. Mech. Engrg.}, 85(1):109--128, 1991.

\bibitem[BH92]{Barbosa2}
H.~J.~C. Barbosa and T.~J.~R. Hughes.
\newblock Boundary {L}agrange multipliers in finite element methods: error
  analysis in natural norms.
\newblock {\em Numer. Math.}, 62(1):1--15, 1992.

\bibitem[BH10]{Hansbo2010}
E.~Burman and P.~Hansbo.
\newblock Interior-penalty-stabilized {L}agrange multiplier methods for the
  finite-element solution of elliptic interface problems.
\newblock {\em IMA J. Numer. Anal.}, 30(3):870--885, 2010.

\bibitem[BHLM17]{Massing2017}
Erik Burman, Peter Hansbo, Mats~G. Larson, and Andr\'e Massing.
\newblock A cut discontinuous {G}alerkin method for the {L}aplace-{B}eltrami
  operator.
\newblock {\em IMA J. Numer. Anal.}, 37(1):138--169, 2017.

\bibitem[CB03a]{Chessa2003}
J.~Chessa and T.~Belytschko.
\newblock An enriched finite element method and level sets for axisymmetric
  two-phase flow with surface tension.
\newblock {\em Internat. J. Numer. Methods Engrg.}, 58(13):2041--2064, 2003.

\bibitem[CB03b]{Chessa2003-2}
J.~Chessa and T.~Belytschko.
\newblock An extended finite element method for two-phase fluids.
\newblock {\em Trans. ASME J. Appl. Mech.}, 70(1):10--17, 2003.

\bibitem[CCCY12]{Choi2012}
M.~H. Cho, H.~G. Choi, S.~H. Choi, and J.~Y. Yoo.
\newblock A {Q}2{Q}1 finite element/level-set method for simulating two-phase
  flows with surface tension.
\newblock {\em Internat. J. Numer. Methods Fluids}, 70(4):468--492, 2012.

\bibitem[CCCY16]{Choi2016-2}
S.~Choi, M.~H. Cho, H.~G. Choi, and J.~Y. Yoo.
\newblock A {Q}2{Q}1 integrated finite element method with the semi-implicit
  consistent {CSF} for solving incompressible two-phase flows with surface
  tension effect.
\newblock {\em Internat. J. Numer. Methods Fluids}, 81(5):284--308, 2016.

\bibitem[CELR11]{Case2011}
Michael~A. Case, Vincent~J. Ervin, Alexander Linke, and Leo~G. Rebholz.
\newblock A connection between {S}cott-{V}ogelius and grad-div stabilized
  {T}aylor-{H}ood {FE} approximations of the {N}avier-{S}tokes equations.
\newblock {\em SIAM J. Numer. Anal.}, 49(4):1461--1481, 2011.

\bibitem[CF12]{Cheng2012}
K.-W. Cheng and T.-P. Fries.
\newblock X{FEM} with hanging nodes for two-phase incompressible flow.
\newblock {\em Comput. Methods Appl. Mech. Engrg.}, 245/246:290--312, 2012.

\bibitem[CF15]{Court2015}
S.~Court and M.~Fourni\'e.
\newblock A fictitious domain finite element method for simulations of
  fluid–structure interactions: The navier–stokes equations coupled with a
  moving solid.
\newblock {\em Journal of Fluids and Structures}, 55:398--408, 2015.

\bibitem[CFL14]{Court2014}
S.~Court, M.~Fourni{\'e}, and A.~Lozinski.
\newblock A fictitious domain approach for the {S}tokes problem based on the
  extended finite element method.
\newblock {\em Internat. J. Numer. Methods Fluids}, 74(2):73--99, 2014.

\bibitem[CH11]{Hansbo2011}
A.~Chernov and P.~Hansbo.
\newblock An {$hp$}-{N}itsche's method for interface problems with
  nonconforming unstructured finite element meshes.
\newblock In {\em Spectral and high order methods for partial differential
  equations}, volume~76 of {\em Lect. Notes Comput. Sci. Eng.}, pages 153--161.
  Springer, Heidelberg, 2011.

\bibitem[DCRD18]{Duret2018}
B.~Duret, R.~Canu, J.~Reveillon, and F.~X. Demoulin.
\newblock A pressure based method for vaporizing compressible two-phase flows
  with interface capturing approach.
\newblock {\em Int. J. Multiph. Flow}, 108:42--50, 2018.

\bibitem[DHB{\etalchar{+}}07]{Devals2007}
C.~Devals, M.~Heniche, F.~Bertrand, P.~A. Tanguy, and R.~E. Hayes.
\newblock A two-phase flow interface capturing finite element method.
\newblock {\em Internat. J. Numer. Methods Fluids}, 53(5):735--751, 2007.

\bibitem[DP98]{Dufour1998}
S.~Dufour and D.~Pelletier.
\newblock An adaptive finite element method for multiphase flows with surface
  tension.
\newblock In {\em Computational mechanics ({B}uenos {A}ires, 1998)}, pages
  CD--ROM file. Centro Internac. M\'etodos Num\'er. Ing., Barcelona, 1998.

\bibitem[DSHT15]{Dhar2015}
Abhinav Dhar, Naoki Shimada, Kosuke Hayashi, and Akio Tomiyama.
\newblock Assessment of numerical treatments in interface capturing simulations
  for surface-tension-driven interface motion.
\newblock {\em J. Comput. Multiph. Flows}, 7(1):15--32, 2015.

\bibitem[DvdHO{\etalchar{+}}14]{Denner2014}
Fabian Denner, Duncan~R. van~der Heul, Guido~T. Oud, Millena~M. Villar, Aristeu
  da~Silveira~Neto, and Berend G.~M. van Wachem.
\newblock Comparative study of mass-conserving interface capturing frameworks
  for two-phase flows with surface tension.
\newblock {\em Int. J. Multiph. Flow}, 61:37--47, 2014.

\bibitem[EG04]{Ern}
A.~Ern and J.-L. Guermond.
\newblock {\em Theory and practice of finite elements}, volume 159 of {\em
  Applied Mathematical Sciences}.
\newblock Springer-Verlag, New York, 2004.

\bibitem[ET02]{Engquist2002}
Bj\"{o}rn Engquist and Anna-Karin Tornberg.
\newblock A finite element based level-set method for multiphase flows.
\newblock In {\em Innovative methods for numerical solutions of partial
  differential equations ({A}rcachon, 1998)}, pages 86--110. World Sci. Publ.,
  River Edge, NJ, 2002.

\bibitem[FB10]{reviewXfem}
T.-P. Fries and T.~Belytschko.
\newblock The extended/generalized finite element method: an overview of the
  method and its applications.
\newblock {\em Internat. J. Numer. Methods Engrg.}, 84(3):253--304, 2010.

\bibitem[FS17]{Fahsi2017}
Adil Fahsi and Azzeddine Soula\"{i}mani.
\newblock Numerical investigations of the {XFEM} for solving two-phase
  incompressible flows.
\newblock {\em Int. J. Comput. Fluid Dyn.}, 31(3):135--155, 2017.

\bibitem[GAT18]{Anjos2018}
E.~Gros, G.~R. Anjos, and J.~R. Thome.
\newblock Interface-fitted moving mesh method for axisymmetric two-phase flow
  in microchannels.
\newblock {\em Internat. J. Numer. Methods Fluids}, 86(3):201--217, 2018.

\bibitem[GG95]{SPP1}
V.~Girault and R.~Glowinski.
\newblock Error analysis of a fictitious domain method applied to a {D}irichlet
  problem.
\newblock {\em Japan J. Indust. Appl. Math.}, 12(3):487--514, 1995.

\bibitem[GGRG15]{Gonzalez2015}
F.~Guill\'{e}n-Gonz\'{a}lez and J.~R. Rodr\'{i}guez-Galv\'{a}n.
\newblock Analysis of the hydrostatic {S}tokes problem and finite-element
  approximation in unstructured meshes.
\newblock {\em Numer. Math.}, 130(2):225--256, 2015.

\bibitem[GR07a]{Gross2007}
S.~Gro\ss and A.~Reusken.
\newblock An extended pressure finite element space for two-phase
  incompressible flows with surface tension.
\newblock {\em J. Comput. Phys.}, 224(1):40--58, 2007.

\bibitem[GR07b]{Reusken2007}
S.~Gro\ss and A.~Reusken.
\newblock An extended pressure finite element space for two-phase
  incompressible flows with surface tension.
\newblock {\em J. Comput. Phys.}, 224(1):40--58, 2007.

\bibitem[GR07c]{Gross2007-2}
S.~Gro\ss and A.~Reusken.
\newblock Finite element discretization error analysis of a surface tension
  force in two-phase incompressible flows.
\newblock {\em SIAM J. Numer. Anal.}, 45(4):1679--1700, 2007.

\bibitem[Gro11]{Gross2011}
Sven Gross.
\newblock Pressure {XFEM} for two-phase incompressible flows with application
  to 3{D} droplet problems.
\newblock In {\em Meshfree methods for partial differential equations {V}},
  volume~79 of {\em Lect. Notes Comput. Sci. Eng.}, pages 81--87. Springer,
  Heidelberg, 2011.

\bibitem[Han05]{Hansbo2005-3}
P.~Hansbo.
\newblock Nitsche's method for interface problems in computational mechanics.
\newblock {\em GAMM-Mitt.}, 28(2):183--206, 2005.

\bibitem[HH02]{Hansbo2002}
A.~Hansbo and P.~Hansbo.
\newblock An unfitted finite element method, based on {N}itsche's method, for
  elliptic interface problems.
\newblock {\em Comput. Methods Appl. Mech. Engrg.}, 191(47-48):5537--5552,
  2002.

\bibitem[HLPS05a]{Hansbo2005-2}
P.~Hansbo, C.~Lovadina, I.~Perugia, and G.~Sangalli.
\newblock A {L}agrange multiplier method for elliptic interface problems using
  non-matching meshes.
\newblock In {\em Applied and industrial mathematics in {I}taly}, volume~69 of
  {\em Ser. Adv. Math. Appl. Sci.}, pages 360--370. World Sci. Publ.,
  Hackensack, NJ, 2005.

\bibitem[HLPS05b]{Hansbo2005}
P.~Hansbo, C.~Lovadina, I.~Perugia, and G.~Sangalli.
\newblock A {L}agrange multiplier method for the finite element solution of
  elliptic interface problems using non-matching meshes.
\newblock {\em Numer. Math.}, 100(1):91--115, 2005.

\bibitem[HLZ14]{Hansbo2014}
P.~Hansbo, M.~G. Larson, and S.~Zahedi.
\newblock A cut finite element method for a {S}tokes interface problem.
\newblock {\em Appl. Numer. Math.}, 85:90--114, 2014.

\bibitem[HR09]{Renard2009}
J.~Haslinger and Y.~Renard.
\newblock A new fictitious domain approach inspired by the extended finite
  element method.
\newblock {\em SIAM J. Numer. Anal.}, 47(2):1474--1499, 2009.

\bibitem[HTMP17]{Turek2017}
Babak~S. Hosseini, Stefan Turek, Matthias M\"{o}ller, and Christian Palmes.
\newblock Isogeometric analysis of the {N}avier-{S}tokes-{C}ahn-{H}illiard
  equations with application to incompressible two-phase flows.
\newblock {\em J. Comput. Phys.}, 348:171--194, 2017.

\bibitem[JZ16]{Zhang2016}
H.~Ji and Q.~Zhang.
\newblock A simple finite element method for {S}tokes flows with surface
  tension using unfitted meshes.
\newblock {\em Internat. J. Numer. Methods Fluids}, 81(2):87--103, 2016.

\bibitem[KMV{\etalchar{+}}16]{Mesri2016}
M.~Khalloufi, Y.~Mesri, R.~Valette, E.~Massoni, and E.~Hachem.
\newblock High fidelity anisotropic adaptive variational multiscale method for
  multiphase flows with surface tension.
\newblock {\em Comput. Methods Appl. Mech. Engrg.}, 307:44--67, 2016.

\bibitem[KS14]{Kou2014}
J.~Kou and S.~Sun.
\newblock An adaptive finite element method for simulating surface tension with
  the gradient theory of fluid interfaces.
\newblock {\em J. Comput. Appl. Math.}, 255:593--604, 2014.

\bibitem[KT04]{Turek2004}
Dmitri Kuzmin and Stefan Turek.
\newblock Finite element discretization and iterative solution techniques for
  multiphase flows in gas-liquid reactors.
\newblock In {\em Conjugate gradient algorithms and finite element methods},
  Sci. Comput., pages 297--324. Springer, Berlin, 2004.

\bibitem[LKG{\etalchar{+}}17]{Liu2017}
W.~Liu, A.~Koniges, K.~Gott, D.~Eder, J.~Barnard, A.~Friedman, N.~Masters, and
  A.~Fisher.
\newblock Surface tension models for a multi-material {ALE} code with {AMR}.
\newblock {\em Comput. \& Fluids}, 151:91--101, 2017.

\bibitem[LS17]{Laadhari2017}
A.~Laadhari and G.~Sz\'ekely.
\newblock Fully implicit finite element method for the modeling of free surface
  flows with surface tension effect.
\newblock {\em Internat. J. Numer. Methods Engrg.}, 111(11):1047--1074, 2017.

\bibitem[LZ12]{Liao2012}
Jian-Hui Liao and Zhuo Zhuang.
\newblock A consistent projection-based {SUPG}/{PSPG} {XFEM} for incompressible
  two-phase flows.
\newblock {\em Acta Mech. Sin.}, 28(5):1309--1322, 2012.

\bibitem[MDB99]{Moes1999}
N.~Mo{\"e}s, J.~Dolbow, and T.~Belytschko.
\newblock A finite element method for crack growth without remeshing.
\newblock {\em Internat. J. Numer. Methods Engrg.}, 46(1):131--150, 1999.

\bibitem[MF16]{Moortgat2016}
Joachim Moortgat and Abbas Firoozabadi.
\newblock Mixed-hybrid and vertex-discontinuous-{G}alerkin finite element
  modeling of multiphase compositional flow on 3{D} unstructured grids.
\newblock {\em J. Comput. Phys.}, 315:476--500, 2016.

\bibitem[MWnH{\etalchar{+}}18]{Heinrich2018}
Mehdi Mostafaiyan, Sven Wie\ss~ner, Gert Heinrich, Mahdi Salami~Hosseini, Jan
  Domurath, and Hossein~Ali Khonakdar.
\newblock Application of local least squares finite element method ({LLSFEM})
  in the interface capturing of two-phase flow systems.
\newblock {\em Comput. \& Fluids}, 174:110--121, 2018.

\bibitem[NRTL97]{Navti1997}
S.E. Navti, K.~Ravindran, C.~Taylor, and R.W. Lewis.
\newblock Finite element modelling of surface tension effects using a
  lagrangian-eulerian kinematic description.
\newblock {\em Computer Methods in Applied Mechanics and Engineering},
  147(1):41--60, 1997.

\bibitem[NTP18]{Park2018}
Van-Tu Nguyen, Van-Dat Thang, and Warn-Gyu Park.
\newblock A novel sharp interface capturing method for two- and three-phase
  incompressible flows.
\newblock {\em Comput. \& Fluids}, 172:147--161, 2018.

\bibitem[OD13]{Owkes2013}
Mark Owkes and Olivier Desjardins.
\newblock A discontinuous {G}alerkin conservative level set scheme for
  interface capturing in multiphase flows.
\newblock {\em J. Comput. Phys.}, 249:275--302, 2013.

\bibitem[OK98]{Ohmori1997}
Katsushi Ohmori and Hideo Kawarada.
\newblock A sharp interface capturing technique in the finite element
  approximation for two-fluid flows.
\newblock In {\em Navier-{S}tokes equations: theory and numerical methods
  ({V}arenna, 1997)}, volume 388 of {\em Pitman Res. Notes Math. Ser.}, pages
  310--321. Longman, Harlow, 1998.

\bibitem[OTD18]{Turek2018}
A.~Ouazzi, S.~Turek, and H.~Damanik.
\newblock A curvature-free multiphase flow solver via surface stress-based
  formulation.
\newblock {\em Internat. J. Numer. Methods Fluids}, 88(1):18--31, 2018.

\bibitem[PS01]{Peric2001}
D.~Peri\'c and P.~H. Saksono.
\newblock On finite element modelling of surface tension: variational
  formulation and applications.
\newblock In {\em Trends in computational structural mechanics}, pages
  731--740. Internat. Center Numer. Methods Eng. (CIMNE), Barcelona, 2001.

\bibitem[Reu08]{Reusken2008}
A.~Reusken.
\newblock Analysis of an extended pressure finite element space for two-phase
  incompressible flows.
\newblock {\em Comput. Vis. Sci.}, 11(4-6):293--305, 2008.

\bibitem[RP]{Getfem}
Y.~Renard and J.~Pommier.
\newblock {\em An open source generic C++ library for finite element methods}.

\bibitem[SF11]{Sauerland2011}
Henning Sauerland and Thomas-Peter Fries.
\newblock The extended finite element method for two-phase and free-surface
  flows: a systematic study.
\newblock {\em J. Comput. Phys.}, 230(9):3369--3390, 2011.

\bibitem[SF13]{Sauerland2013}
Henning Sauerland and Thomas-Peter Fries.
\newblock The stable {XFEM} for two-phase flows.
\newblock {\em Comput. \& Fluids}, 87:41--49, 2013.

\bibitem[SKS98]{Sato1998}
M.~Sato, N.~Kudo, and N.~Saito.
\newblock Surface tension reduction of liquid by applied electric field using
  vibrating jet method.
\newblock {\em IEEE Transactions on Industry Applications}, 34(2):294--300, Mar
  1998.

\bibitem[Ste95]{Stenberg}
R.~Stenberg.
\newblock On some techniques for approximating boundary conditions in the
  finite element method.
\newblock {\em J. Comput. Appl. Math.}, 63(1-3):139--148, 1995.
\newblock International Symposium on Mathematical Modelling and Computational
  Methods Modelling 94 (Prague, 1994).

\bibitem[Tab07]{Tabata2007}
M.~Tabata.
\newblock Finite element schemes based on energy-stable approximation for
  two-fluid flow problems with surface tension.
\newblock {\em Hokkaido Math. J.}, 36(4):875--890, 2007.

\bibitem[Tem83]{Temam}
R.~Temam.
\newblock {\em Probl\`emes math\'ematiques en plasticit\'e}, volume~12 of {\em
  M\'ethodes Math\'ematiques de l'Informatique [Mathematical Methods for
  Information Science]}.
\newblock Gauthier-Villars, Montrouge, 1983.

\bibitem[Tez03]{Tezduhar2003}
Tayfun~E. Tezduyar.
\newblock Stabilized finite element formulations and interface-tracking and
  interface-capturing techniques for incompressible flows.
\newblock In {\em Numerical simulations of incompressible flows ({H}alf {M}oon
  {B}ay, {CA}, 2001)}, pages 221--239. World Sci. Publ., River Edge, NJ, 2003.

\bibitem[TT00]{Tabata2000}
M.~Tabata and D.~Tagami.
\newblock A finite element analysis of a linearized problem of the
  {N}avier-{S}tokes equations with surface tension.
\newblock {\em SIAM J. Numer. Anal.}, 38(1):40--57, 2000.

\bibitem[Whi15]{Whiteley2015}
J.~P. Whiteley.
\newblock A discontinuous {G}alerkin finite element method for multiphase
  viscous flow.
\newblock {\em SIAM J. Sci. Comput.}, 37(4):B591--B612, 2015.

\bibitem[XPS{\etalchar{+}}16]{Xie2016}
Z.~Xie, D.~Pavlidis, P.~Salinas, J.~R. Percival, C.~C. Pain, and O.~K. Matar.
\newblock A balanced-force control volume finite element method for interfacial
  flows with surface tension using adaptive anisotropic unstructured meshes.
\newblock {\em Comput. \& Fluids}, 138:38--50, 2016.

\end{thebibliography}

\end{document}